\newcommand{\mathcircumflex}[0]{\mbox{\^{}}}
\numberwithin{equation}{section}
\numberwithin{figure}{section}
\theoremstyle{plain}
\newtheorem{thm}{\protect\theoremname}
  \theoremstyle{plain}
  \newtheorem{lem}[thm]{\protect\lemmaname}
  \theoremstyle{plain}
  \newtheorem{cor}[thm]{\protect\corollaryname}
  \providecommand{\corollaryname}{Corollary}
  \providecommand{\lemmaname}{Lemma}
\providecommand{\theoremname}{Theorem}
\begin{document}

\title{Gaussian Free Fields and KPZ Relation in $\mathbf{\mathbb{R}^{4}}$}

\author{Linan Chen and \foreignlanguage{english}{Dmitry Jakobson}}

\selectlanguage{english}%

\address{Department of Mathematics and Statistics, McGill University, 805
Sherbrooke Street West, Montréal, QC H3A 0B9, Ca\-na\-da.}

\email{lnchen@math.mcgill.ca, jakobson@math.mcgill.ca}

\thanks{L.C. was partly supported by FQRNT. D.J was partly supported by NSERC,
FQRNT and Dawson fellowships.}

\keywords{Gaussian free field, KPZ relation, conformal class, Paneitz operator.}

\subjclass[2000]{28C20, 46E35, 31B30, 60G57, 60G60.}

\date{\today}
\selectlanguage{american}%
\begin{abstract}
This work aims to extend part of the two dimensional results of Duplantier
and Sheffield on Liouville quantum gravity \cite{DS1} to four dimensions,
and indicate possible extensions to other even-dimensional spaces
$\mathbb{R}^{2n}$ as well as Riemannian manifolds. 

Let $\Theta$ be the Gaussian free field on $\mathbb{R}^{4}$ with
the underlying Hilbert space $H^{2}\left(\mathbb{R}^{4}\right)$ and
the inner product $\left(\left(I-\Delta\right)^{2}\cdot,\cdot\right)_{L^{2}}$,
and $\theta$ a generic element from $\Theta$. We consider a sequence
of random Borel measures on $\mathbb{R}^{4}$, denoted by $\left\{ m_{\epsilon_{n}}^{\theta}\left(dx\right):n\geq1\right\} $,
each of which is absolutely continuous with respect to the Lebesgue
measure $dx$, and the density function is given by the exponential
of a centered Gaussian family parametrized by $x\in\mathbb{R}^{4}$.
We show that with probability 1, $m_{\epsilon_{n}}^{\theta}\left(dx\right)$
weakly converges as $\epsilon_{n}\downarrow0$, and the limit measure
can be ``formally'' written as ``$m^{\theta}\left(dx\right)=e^{2\gamma\theta\left(x\right)}dx$''.
In this setting, we also prove a KPZ relation, which is the quadratic
relation between the scaling exponent of a bounded Borel set on $\mathbb{R}^{4}$
under the Lebesgue measure and its counterpart under the random measure
$m^{\theta}\left(dx\right)$. 

Our approach is similar to the one used in \cite{DS1} with adaptations
to $\mathbb{R}^{4}$. 
\end{abstract}
\maketitle

\section{Introduction }

\selectlanguage{english}%
Random measures have long been considered in $2$-dimensional conformal
field theory and quantum gravity since the work of Knizhnik, Polyakov
and Zamolodchikov \cite{KPZ}. Recently, a probabilistic proof of
the formula due to Knizhnik, Polyakov and Zamolodchikov was given
by Duplantier and Sheffield in \cite{DS1}. On the unit planar disc
$\mathbb{D}$, Duplantier and Sheffield construct the Liouville quantum
gravity measure ``$e^{\gamma h\left(z\right)}dz$'', where $dz$
is the Lebesgue measure on $\mathbb{D},$ $\gamma$ is a properly
chosen positive constant and $h$ is an instance of the Gaussian free
field (GFF) on $\mathbb{D}$ with the Dirichlet inner product. To
be specific, they prove that the random measure exists as the weak
convergence limit of $\epsilon^{\gamma^{2}/2}e^{\gamma h_{\epsilon}\left(z\right)}dz$
as $\epsilon\downarrow0$, where $h_{\epsilon}\left(z\right)$ is
the circular average of $h$ over the circle centered at $z$ with
radius $\epsilon.$ They further show that there is a quadratic relation,
known as the KPZ relation, between the scaling exponent of a random
set under the Lebesgue measure, and its counterpart under the quantum
gravity measure. Another derivation was given in \cite{DB}. Gaussian
free field in dimension $2$ has also been considered in \cite{HMP}
and numerous other papers.

\selectlanguage{american}%
In this article, we generalize part of the results from \cite{DS1}
to four dimensions. We define the Euclidean GFF on $\mathbb{R}^{4}$,
denoted by $\Theta$, with the inner product determined by the Bessel
operator $\left(I-\Delta\right)^{2}$. In other words, the underlying
Hilbert space of $\Theta$ is given by the Sobolev space $H^{2}\left(\mathbb{R}^{4}\right)$
with the inner product $\left(\left(I-\Delta\right)^{2}\cdot,\cdot\right)_{L^{2}}$.
In this setting, we prove (Section 2, Theorem \ref{thm: construction of the measure})
that given $0<\gamma^{2}<2\pi^{2}$, almost every $\theta\in\Theta$
admits a random measure on $\mathbb{R}^{4}$ which ``formally''
has the density $e^{2\gamma\theta\left(x\right)}$ with respect to
the Lebesgue measure $dx$ on $\mathbb{R}^{4}$. We also show that
this random measure satisfies a KPZ relation similar to the one in
the two dimensional case. Namely, if $\kappa\in\left[0,1\right]$
is the scaling exponent of a bounded Borel set in $\mathbb{R}^{4}$
under the Lebesgue measure, and $K\in\left[0,1\right]$ is the scaling
exponent of the same set but under the random measure obtained above
(both $\kappa$ and $K$ will be defined in Section 4), then $\kappa$
and $K$ satisfy the following quadratic relation (Section 4, Theorem
\ref{thm:KPZ}): 
\[
\kappa=K\left(1-\frac{\gamma^{2}}{16\pi^{2}}\right)+\frac{\gamma^{2}}{16\pi^{2}}K^{2}.
\]

Our proof follows the outline of the proof in \cite{DS1} with adaptations
to four dimensions. Mainly we have to overcome (both in ``designing''
the model to work with and in technical details) the difficulties
caused by the absence in our problem the two dimensional conformal
structure. To interpret rigorously an instance $\theta$ of the GFF
on the entire Euclidean space $\mathbb{R}^{4}$, we adopt the theory
of the abstract Wiener space. A key ingredient in this theory is the
underlying Hilbert space whose inner product determines the covariance
structure of the field. It is already known that in order to obtain
a measure which ``formally'' has the exponential of $\theta\left(x\right)$
as the density with respect to $dx$, the covariance function $\textrm{Cov}\left(\theta\left(x\right),\theta\left(y\right)\right)$
can at most grow at the rate of $-\log\left|x-y\right|$ when $\left|x-y\right|$
is small. Taking this into account, $H^{2}\left(\mathbb{R}^{4}\right)$
with the inner product $\left(\left(I-\Delta\right)^{2}\cdot,\cdot\right)_{L^{2}}$
becomes our natural choice. Also this way of defining the GFF makes
it possible, in certain situations, to obtain explicit formulas of
the covariance function. To construct the random measure and thereafter
to study it, we always need to relate it to a sequence of approximating
measures which converges in some proper sense. So it is our intention
to choose the approximating measures appropriately so they will be
convenient to work with. In the two dimensional case in \cite{DS1},
the approximating measures are in terms of the circular averages of
the GFF on $\mathbb{D}$. In fact, the properties of the Gaussian
family consisting of these circular averages play an important role
in the proof. For example, if $h$ is an instance of the GFF on $\mathbb{D}$,
then given any $z\in\mathbb{D}$, the one-parameter family $\left\{ h_{\epsilon}\left(z\right):0<\epsilon\leq1\right\} $
has up to a time change the same distribution as a standard Brownian
motion. Such properties are derived from the Green's function of the
Laplace operator $\Delta$ on $\mathbb{D}$, which, in particular,
is harmonic. Therefore, it should not be surprising that the trivial
analogue in four dimensions, that is, the family of spherical averages
of $\theta$, fails to have such properties, which makes it a less
than optimal substitute for $h_{\epsilon}$ in carrying out this project
on $\mathbb{R}^{4}$. In Section 2, we present one possible replacement
for $h_{\epsilon}$ in four dimensions which still has simple and
concrete geometric interpretations (in fact, it is given by a functional
of the spherical average of $\theta$), but possesses, to a large
extent, similar properties to those of $h_{\epsilon}$ in two dimensions.
In Section 3, we use the results from Section 2 to build the approximating
measures, and then prove they almost surely admit a limit measure
in the sense of weak convergence. In Section 4, we lay out an outline
to derive the KPZ relation and the proofs of the main results are
collected in Section 5. Other work on the KPZ relation in higher dimensions
with different settings can be found in \cite{JJRV,RV}.

Our original interest in constructing such a random measure lies in
its potential applications in the study of conformal classes of Riemannian
metrics. In fact, another more geometric point of view on the GFF
on a planar domain or more generally on a surface $\Sigma$, is to
consider it as a measure on a conformal class of metrics on $\Sigma$,
where the measure is constructed with the help of a reference metric
$g_{0}$ on $\Sigma$, but where the result does not depend on $g_{0}$.
It seems natural to generalize this approach to conformal classes
of metrics on higher-dimensional manifolds. It turns out that on a
compact four-dimensional manifold $\mathfrak{M}$, a natural replacement
for the Laplace-Beltrami operator $\Delta$ (that is used in the construction
of the GFF on surfaces) is the $4$-th order \emph{Paneitz operator}
that arises in the conformal geometry. More generally, on compact
$2n$-dimensional manifolds it seems natural to use the \emph{dimension-critical
GJMS operator} in the construction of higher-dimensional analogues
of the two-dimensional GFF. This will be further explained in the
second part of Section 6.\\

\noindent \textbf{Acknowledgments.} The authors thank L. Addario-Berry
and D. Stroock for their valuable comments on an earlier version of
this article. We also thank R. Adler, I. Binder, Y. Canzani, B. Duplantier,
S. Klevtsov, R. Ponge, P. Sarnak, S. Sheffield, J. Taylor, I. Wigman,
P. Yang, S. Zelditch for many fruitful conversations.

\section{Spherical Averages of GFF on $\mathbb{R}^{4}$}

We start with a brief review of some fundamental facts about the abstract
Wiener space theory (\cite{aws} or \cite{awsrevisited}). An abstract
Wiener space is commonly used in constructions of infinite dimensional
Gaussian measures. The basic setting of an abstract Wiener space is
as follows. Given a (infinite dimensional) Banach space $\Theta$
and a (infinite dimensional) Hilbert space $H$, assume both $\Theta$
and $H$ are separable, and $H$ can be continuously embedded into
$\Theta$ as a dense subspace. Therefore if $x^{*}$ is a bounded
linear functional on $\Theta$ (denoted by $x^{*}\in\Theta^{*}$),
then there is unique $h_{x^{*}}\in H$ such that for every $h\in H$,
$\left(h,h_{x^{*}}\right)_{H}=\left\langle h,x^{*}\right\rangle $,
where $\left\langle \cdot,*\right\rangle $ refers to the action of
$\Theta^{*}$ on $\Theta$ (or more specifically in later discussions,
the action of tempered distributions on test functions). Let $\mathcal{W}$
be a probability measure on $\left(\Theta,\mathfrak{B}_{\Theta}\right)$
where $\mathfrak{B}_{\Theta}$ is the Borel $\sigma-$algebra of $\Theta$.
If $\mathcal{W}$ satisfies
\[
\mathbb{E}^{\mathcal{W}}\left[\exp\left(i\left\langle \cdot,x^{*}\right\rangle \right)\right]=\exp\left(-\frac{\left\Vert h_{x^{*}}\right\Vert _{H}^{2}}{2}\right)\mbox{ for all }x^{*}\in\Theta^{*},
\]
then the triple $\left(H,\Theta,\mathcal{W}\right)$ is called an
\emph{abstract Wiener space}. It is known (\cite{probability}, §8.3)
that given any separable Hilbert space, one can always find $\Theta$
and $\mathcal{W}$ such that $\left(H,\Theta,\mathcal{W}\right)$
forms an abstract Wiener space. Moreover, since $\left\{ h_{x^{*}}:x^{*}\in\Theta^{*}\right\} $
is also dense in $H$, the linear mapping 
\[
\mathcal{I}:\; h_{x^{*}}\in H\mapsto\mathcal{I}\left(h_{x^{*}}\right)\equiv\left\langle \cdot,x^{*}\right\rangle \in L^{2}\left(\mathcal{W}\right)
\]
can be uniquely extended as a linear isometry from $H$ to $L^{2}\left(\mathcal{W}\right)$.
Its images $\left\{ \mathcal{I}\left(h\right):h\in H\right\} $, known
as the \emph{Paley-Wiener integrals}, form a centered Gaussian family
whose covariance is given by 
\[
\mathbb{E}^{\mathcal{W}}\left[\mathcal{I}\left(h_{1}\right)\mathcal{I}\left(h_{2}\right)\right]=\left(h_{1},h_{2}\right)_{H}\mbox{ for all }h_{1},h_{2}\in H.
\]
We point out that although the Hilbert structure of $H$ plays an
essential role, $\mathcal{W}\left(H\right)=0$ and the choice of $\Theta$
is not unique. 

As we have mentioned in the previous section, we consider in our project
the infinite dimensional Gaussian measure on the space of certain
tempered distributions on $\mathbb{R}^{4}$, with the underlying Hilbert
space given by the Sobolev space $H\equiv H^{2}\left(\mathbb{R}^{4}\right)$,
which is the completion of the real valued Schwartz test function
space $\mathcal{S}\left(\mathbb{R}^{4}\right)$ under the inner product
\[
\left(f_{1},f_{2}\right)_{H}\equiv\int_{\mathbb{R}^{4}}\left(I-\Delta\right)^{2}f_{1}\left(x\right)f_{2}\left(x\right)dx\mbox{ for all }f_{1},f_{2}\in\mathcal{S}\left(\mathbb{R}^{4}\right).
\]
Then, given this particular choice of $H$, our notion of the \emph{Gaussian
free field} on $\mathbb{R}^{4}$ refers to any probability space $\left(\Theta,\mathfrak{B}_{\Theta},\mathcal{W}\right)$
such that $\left(\Theta,H,\mathcal{W}\right)$ forms an abstract Wiener
space. For example, if $\tilde{\Theta}$ is the space of continuous
functions $\theta:\,\mathbb{R}^{4}\rightarrow\mathbb{R}$ satisfying
\[
\lim_{\left|x\right|\rightarrow\infty}\left(\log\left(e+\left|x\right|\right)\right)^{-1}\left|\theta\left(x\right)\right|=0,
\]
then $\Theta$ can be chosen as the image of $\tilde{\Theta}$ under
the Bessel operator $\left(I-\Delta\right)^{\frac{1}{4}}$, i.e.,
\[
\Theta=\left\{ \left(I-\Delta\right)^{\frac{1}{4}}\theta:\,\theta\in\tilde{\Theta}\right\} .
\]
From this we observe that $\Theta$ consists of tempered distributions
which in general are not defined point-wise. Nonetheless, we can understand
some properties of the GFF by studying the Paley-Wiener integrals,
which can be viewed as ``generalized'' action of certain tempered
distributions on $\Theta$.

In addition, if $H^{-2}=H^{-2}\left(\mathbb{R}^{4}\right)$ is the
Hilbert space consisting of tempered distributions $\mu$ such that
\[
\left\Vert \mu\right\Vert _{H^{-2}}^{2}\equiv\frac{1}{\left(2\pi\right)^{4}}\int_{\mathbb{R}^{4}}\left(1+\left|\xi\right|^{2}\right)^{-2}\left|\hat{\mu}\left(\xi\right)\right|^{2}d\xi<\infty
\]
where $\hat{\mu}$ is the Fourier transform (without the factor $\left(2\pi\right)^{-2}$
in the definition) of $\mu$, then we can identify $H$ with $H^{-2}$
since $\left(I-\Delta\right)^{-2}:\, H^{-2}\rightarrow H$ is obviously
a linear isometry. We will abuse the notation%
\footnote{The subscript of ``$h_{\nu}$'' is an element of $H^{-2}$, not
to be confused with ``$h_{x^{*}}$'' in the definition of the abstract
Wiener space where $x^{*}\in\Theta^{*}$. %
} by denoting ``$h_{\nu}$'' the image of $\nu\in H^{-2}$ under
$\left(I-\Delta\right)^{-2}$. Then $h_{\nu}$ is the unique element
in $H$ such that $\left\langle h,\nu\right\rangle =\left(h,h_{\nu}\right)_{H}$
for all $h\in H$, which suggests that the corresponding Paley-Wiener
integral $\mathcal{I}\left(h_{\nu}\right)$ can be viewed as a ``representation''
of the action of $\nu$ on $\Theta$, even though $\nu$ is not in
$\Theta^{*}$ and $\mathcal{I}\left(h_{\nu}\right)\left(\theta\right)$
is only defined for almost every $\theta\in\Theta$. Meanwhile, $\left\{ \mathcal{I}\left(h_{\nu}\right):\nu\in H^{-2}\right\} $
is also a Gaussian family whose covariance is given by 
\[
\mathbb{E}^{\mathcal{W}}\left[\mathcal{I}\left(h_{\nu_{1}}\right)\mathcal{I}\left(h_{\nu_{2}}\right)\right]=\left(h_{\nu_{1}},h_{\nu_{2}}\right)_{H}=\left(\nu_{1},\nu_{2}\right)_{H^{-2}}.
\]

With these in mind, as a natural analogue of the 2D circular average,
we consider the spherical average of the GFF on $\mathbb{R}^{4}$.
To this end, for every $x\in\mathbb{R}^{4}$ and $\epsilon>0$, denote
$\sigma_{\epsilon}^{x}$ the tempered distribution determined by
\[
\left\langle f,\sigma_{\epsilon}^{x}\right\rangle \equiv\frac{1}{2\pi^{2}\epsilon^{3}}\int_{S_{\epsilon}\left(x\right)}f\left(y\right)d\sigma\left(y\right)\mbox{ for all }f\in\mathcal{S}\left(\mathbb{R}^{4}\right),
\]
where $S_{\epsilon}\left(x\right)$ is the sphere centered at $x$
with radius $\epsilon$, and $d\sigma$ is the surface area measure
on $S_{\epsilon}\left(x\right)$. Clearly, the action of $\sigma_{\epsilon}^{x}$
is to take the spherical average of $f$ over $S_{\epsilon}\left(x\right)$.
It is an easy matter to verify that $\sigma_{\epsilon}^{x}\in H^{-2}$.
In fact, one only needs to write down the Fourier transform of $\sigma_{\epsilon}^{x}$
as 
\begin{equation}
\hat{\sigma_{\epsilon}^{x}}\left(\xi\right)=2\left(\epsilon\left|\xi\right|\right)^{-1}J_{1}\left(\epsilon\left|\xi\right|\right)e^{i\left(x,\xi\right)_{\mathbb{R}^{4}}}\label{eq:fourier transf. of average}
\end{equation}
where $J_{k}\left(r\right)$ is the Bessel function of order $k\in\mathbb{N}$,
and use the fact that $J{}_{k}\left(r\right)$ is asymptotic to $r^{-1/2}$
when $r$ is large.

As we have indicated in the introduction (and as we will confirm in
the next lemma), the spherical average of the GFF on $\mathbb{R}^{4}$
does not behave as ``nicely'' as the circular average of the GFF
in two dimensions. For one thing $\left\{ \mathcal{I}\left(h_{\sigma_{\epsilon}^{x}}\right):\epsilon>0\right\} $
fails to be a reversed Markov process. An intuitive way to view this
is that, the spherical average does not bear enough information in
itself for this Gaussian family parametrized by radius $\epsilon>0$
to be (reversed) Markovian. It might also be helpful to relate this
to the following analogous problem: when solving PDEs with higher
order differential operator on a domain with boundary, one often needs
more than one boundary condition (e.g., both the Dirichlet and the
Neumann boundary conditions) to uniquely determine the solution. Inspired
by this idea, besides the average itself we will also ``collect''
one more piece of information about the GFF from each sphere, which
is the ``derivative'' of the average with respect to the radius.
Namely, for every $x\in\mathbb{R}^{4}$ and $\epsilon>0$, denote
$d\sigma_{\epsilon}^{x}$ the tempered distribution given by $\left\langle f,d\sigma_{\epsilon}^{x}\right\rangle \equiv\frac{d}{d\epsilon}\left\langle f,\sigma_{\epsilon}^{x}\right\rangle $
for all $f\in\mathcal{S}\left(\mathbb{R}^{4}\right)$, then the action
of $d\sigma_{\epsilon}^{x}$ can be viewed as to take the derivative
of the spherical average of the GFF in the radial direction. It follows
trivially from (\ref{eq:fourier transf. of average}) that
\begin{equation}
\hat{d\sigma_{\epsilon}^{x}}\left(\xi\right)=\frac{d}{d\epsilon}\hat{\sigma_{\epsilon}^{x}}\left(\xi\right)=-2\epsilon^{-1}J_{2}\left(\epsilon\left|\xi\right|\right)e^{i\left(x,\xi\right)_{\mathbb{R}^{4}}}.\label{eq:fourier transf. for derivative of average}
\end{equation}
In particular, $d\sigma_{\epsilon}^{x}$ is also in $H^{-2}$ and
so $\left\{ \mathcal{I}\left(h_{\sigma_{\epsilon}^{x}}\right),\mathcal{I}\left(h_{d\sigma_{\epsilon}^{x}}\right):x\in\mathbb{R}^{4},\epsilon>0\right\} $
forms a centered Gaussian family whose covariance is determined by
the $H^{-2}$ inner product of $\left\{ \sigma_{\epsilon}^{x},d\sigma_{\epsilon}^{x}:x\in\mathbb{R}^{4},\epsilon>0\right\} $.

The next lemma in some sense validates our decision to take $d\sigma_{\epsilon}^{x}$
into account. It shows that by putting $\mathcal{I}\left(h_{\sigma_{\epsilon}^{x}}\right)$
and its ``derivative'' $\mathcal{I}\left(h_{d\sigma_{\epsilon}^{x}}\right)$
together%
\footnote{This idea came from discussions with Daniel W. Stroock when the first
author was studying at MIT.%
}, not only does the Gaussian family recover the reversed Markov property
in the concentric case (with $x\in\mathbb{R}^{4}$ fixed, parametrized
by $\epsilon>0$ only), the non-concentric family (parametrized by
both $\epsilon>0$ and $x\in\mathbb{R}^{4}$) also resembles, to a
large extent, its counterpart in two dimensions. To be precise, we
define the vector-valued Gaussian random variable: 
\[
V_{\epsilon}^{x}\equiv\left(\begin{array}{c}
\mathcal{I}\left(h_{\sigma_{\epsilon}^{x}}\right)\\
\mathcal{I}\left(h_{d\sigma_{\epsilon}^{x}}\right)
\end{array}\right)\mbox{ for every }x\in\mathbb{R}^{4}\mbox{ and }\epsilon>0.
\]
Then, under certain circumstances, the covariance matrix of the Gaussian
family $\left\{ V_{\epsilon}^{x}:x\in\mathbb{R}^{4},\epsilon>0\right\} $
can be evaluated explicitly as follows. 
\begin{lem}
\label{lem:on V_t(x)}For $r\in\left(0,\infty\right)$, define the
following four matrices: 
\[
\begin{split}\mathbf{A}\left(r\right)\equiv\left(\begin{array}{cc}
K_{1}^{\prime}\left(r\right) & K_{1}\left(r\right)/r\\
K_{1}^{\prime\prime}\left(r\right) & -K_{2}\left(r\right)/r
\end{array}\right),\, & \mathbf{B}\left(r\right)\equiv\left(\begin{array}{cc}
I_{1}\left(r\right)/r & I_{1}^{\prime}\left(r\right)\\
I_{2}\left(r\right)/r & I_{1}^{\prime\prime}\left(r\right)
\end{array}\right),\\
\mathbf{C}\left(r\right)\equiv\left(\begin{array}{cc}
I_{1}\left(r\right)/r & 0\\
I_{2}\left(r\right) & I_{1}\left(r\right)/r
\end{array}\right),\;\; & \mathbf{D}\left(r\right)\equiv\left(\begin{array}{cc}
-K_{2}\left(r\right) & K_{1}\left(r\right)/r\\
K_{1}\left(r\right)/r & 0
\end{array}\right),
\end{split}
\]
 where $I_{k},K_{k}$ are the modified Bessel functions of order $k\in\mathbb{N}$.
Then,\\
\\
(1), given $x\in\mathbb{R}^{4}$ and $\epsilon_{1}\geq\epsilon_{2}>0$,
\begin{equation}
\mathbb{E}^{\mathcal{W}}\left[V_{\epsilon_{1}}^{x}\left(V_{\epsilon_{2}}^{x}\right)^{\top}\right]=\left(-\frac{1}{4\pi^{2}}\right)\mathbf{A}\left(\epsilon_{1}\right)\mathbf{B}^{\top}\left(\epsilon_{2}\right).\label{eq:vector covariance concentric}
\end{equation}
In particular, $\left\{ V_{\epsilon}^{x}:\epsilon>0\right\} $ is
a vector-valued Gaussian reversed Markov process in the sense that
for every Borel $A\subseteq\mathbb{R}^{2}$, 
\[
\mathcal{W}\left(V_{\epsilon_{2}}^{x}\in A|\sigma\left\{ V_{\eta}^{x}:\,\eta\geq\epsilon_{1}\right\} \right)=\mathcal{W}\left(V_{\epsilon_{2}}^{x}\in A|V_{\epsilon_{1}}^{x}\right),
\]
where $\sigma\left\{ V_{\eta}^{x}:\eta\geq\epsilon_{1}\right\} $
is the $\sigma-$algebra generated by $\left\{ V_{\eta}^{x}:\eta\geq\epsilon_{1}\right\} $.
\\
\\
(2), given $x,y\in\mathbb{R}^{4}$, $x\neq y$, and $\epsilon_{1},\epsilon_{2}>0$
with $\epsilon_{1}>\left|x-y\right|+\epsilon_{2}$, 
\begin{equation}
\mathbb{E}^{\mathcal{W}}\left[V_{\epsilon_{1}}^{x}\left(V_{\epsilon_{2}}^{y}\right)^{\top}\right]=\left(-\frac{1}{2\pi^{2}}\right)\mathbf{A}\left(\epsilon_{1}\right)\mathbf{C}\left(\left|x-y\right|\right)\mathbf{B}^{\top}\left(\epsilon_{2}\right).\label{eq:vector covariance inclusion}
\end{equation}
\\
(3), given $x,y\in\mathbb{R}^{4}$, $x\neq y$, and $\epsilon_{1},\epsilon_{2}>0$
with $\left|x-y\right|>\epsilon_{1}+\epsilon_{2}$, 
\begin{equation}
\mathbb{E}^{\mathcal{W}}\left[V_{\epsilon_{1}}^{x}\left(V_{\epsilon_{2}}^{y}\right)^{\mbox{\ensuremath{\top}}}\right]=\left(-\frac{1}{2\pi^{2}}\right)\mathbf{B}\left(\epsilon_{1}\right)\mathbf{D}\left(\left|x-y\right|\right)\mathbf{B}^{\top}\left(\epsilon_{2}\right).\label{eq:vector covariance nonoverlap}
\end{equation}
 
\end{lem}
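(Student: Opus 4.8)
\emph{Strategy.} Everything reduces to a covariance computation: by the Paley--Wiener isometry, each entry of $\mathbb{E}^{\mathcal W}\!\left[V_{\epsilon_{1}}^{x}\left(V_{\epsilon_{2}}^{y}\right)^{\top}\right]$ is an $H^{-2}$ inner product of two elements of $\left\{\sigma_{\epsilon}^{x},d\sigma_{\epsilon}^{x}\right\}$, and the task is to evaluate these explicitly. Write $G$ for the Green function of $\left(I-\Delta\right)^{2}$ on $\mathbb{R}^{4}$, i.e. the tempered distribution with $\hat G\left(\xi\right)=\left(1+\left|\xi\right|^{2}\right)^{-2}$; its radial profile is smooth off the origin, has a $-\log\left|x\right|$ singularity there, and decays exponentially. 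Since $\left(I-\Delta\right)^{-2}\colon H^{-2}\to H$ is an isometry, for $\mu,\nu\in H^{-2}$ we have $\left(\mu,\nu\right)_{H^{-2}}=\left(h_{\mu},h_{\nu}\right)_{H}=\langle\mu,G*\nu\rangle$, hence $\mathbb{E}^{\mathcal W}\!\left[\mathcal I\left(h_{\mu}\right)\mathcal I\left(h_{\nu}\right)\right]=\langle\mu,G*\nu\rangle$. Writing $u_{\nu}\equiv G*\nu$, the four entries of the covariance matrix are precisely the spherical average of $u_{\nu}$ over $S_{\epsilon_{1}}\left(x\right)$ and its $\epsilon_{1}$--derivative, for $\nu\in\left\{\sigma_{\epsilon_{2}}^{y},d\sigma_{\epsilon_{2}}^{y}\right\}$. (One may instead stay on the Fourier side and use \eqref{eq:fourier transf. of average}, \eqref{eq:fourier transf. for derivative of average} to write every entry as a one--dimensional integral $\int_{0}^{\infty}\left(1+r^{2}\right)^{-2}\times\left(\text{products of }J_{1},J_{2}\text{ in }\epsilon_{1}r,\epsilon_{2}r\right)\times\left(\text{Bessel factor in }\left|x-y\right|r\right)r^{3}\,dr$ after performing the angular integration over $\left\{\left|\xi\right|=r\right\}$; these are Weber--Schafheitlin integrals and the two routes are interchangeable.)

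\emph{The radial ODE.} Off $S_{\epsilon_{2}}\left(y\right)$ the function $u_{\nu}$ solves $\left(I-\Delta\right)^{2}u_{\nu}=0$. On functions radial about $y$ the operator $I-\Delta$ acts by $u\mapsto u-u''-\tfrac{3}{r}u'$, whose kernel is spanned by $r^{-1}I_{1}\left(r\right)$ and $r^{-1}K_{1}\left(r\right)$; iterating, the kernel of $\left(I-\Delta\right)^{2}$ on radial functions is four--dimensional, with two solutions regular at $0$ (built from $r^{-1}I_{1}$ and, by variation of parameters together with the recurrences $\tfrac{d}{dz}\!\left(z^{-1}I_{1}\right)=z^{-1}I_{2}$, $I_{2}=I_{0}-2I_{1}/z$, also $I_{0},I_{2}$) and two decaying at $\infty$ (the same with $K$, using $\tfrac{d}{dz}\!\left(z^{-1}K_{1}\right)=-z^{-1}K_{2}$). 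By regularity at $y$, $u_{\nu}$ is a combination of the regular solutions on $\left\{\left|z-y\right|<\epsilon_{2}\right\}$; by decay, a combination of the decaying ones on $\left\{\left|z-y\right|>\epsilon_{2}\right\}$; and the two pieces are glued by three continuity conditions plus one jump condition across $S_{\epsilon_{2}}\left(y\right)$, the jump of the third radial derivative being fixed by the mass of $\sigma_{\epsilon_{2}}^{y}$ (resp. $d\sigma_{\epsilon_{2}}^{y}$), the surface area $2\pi^{2}\epsilon_{2}^{3}$ of $S_{\epsilon_{2}}\left(y\right)$, and the Wronskian $I_{1}K_{1}'-I_{1}'K_{1}=-1/r$. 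Solving this linear system for the two choices of $\nu$ produces the coefficient data packaged by $\mathbf B\left(\epsilon_{2}\right)$, whose entries $I_{1}\left(\epsilon_{2}\right)/\epsilon_{2},\ I_{1}'\left(\epsilon_{2}\right),\ I_{2}\left(\epsilon_{2}\right)/\epsilon_{2},\ I_{1}''\left(\epsilon_{2}\right)$ are exactly the ingredients of the matching conditions.

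\emph{Reading off over $S_{\epsilon_{1}}\left(x\right)$; the three cases.} For $x=y$ with $\epsilon_{1}\geq\epsilon_{2}$, on $\left\{r>\epsilon_{2}\right\}$ the function $u_{\nu}$ is the decaying ($K$--type) combination; evaluating it and its $\epsilon_{1}$--derivative at $r=\epsilon_{1}$ gives the row pair encoded by $\mathbf A\left(\epsilon_{1}\right)$, yielding \eqref{eq:vector covariance concentric} once the constants $\left(2\pi\right)^{-4}$, the $2\pi^{2}$ of $S^{3}$, the Bessel normalizations, and the Wronskian are collected into $-1/4\pi^{2}$. The resulting multiplicative form $\mathbf A\left(\epsilon_{1}\right)\mathbf B^{\top}\left(\epsilon_{2}\right)$ for $\epsilon_{1}\geq\epsilon_{2}$ is precisely the degenerate--kernel criterion for a Gaussian reversed Markov process: for $\epsilon_{2}\leq\epsilon_{1}\leq\eta$ one checks $\mathbb{E}\!\left[V_{\eta}^{x}\left(V_{\epsilon_{2}}^{x}\right)^{\top}\right]=\mathbb{E}\!\left[V_{\eta}^{x}\left(V_{\epsilon_{1}}^{x}\right)^{\top}\right]\left(\mathbb{E}\!\left[V_{\epsilon_{1}}^{x}\left(V_{\epsilon_{1}}^{x}\right)^{\top}\right]\right)^{-1}\mathbb{E}\!\left[V_{\epsilon_{1}}^{x}\left(V_{\epsilon_{2}}^{x}\right)^{\top}\right]$ using invertibility of $\mathbf A\left(\epsilon_{1}\right)$ and $\mathbf B\left(\epsilon_{1}\right)$, which for jointly Gaussian vectors is equivalent to the stated conditional--independence statement. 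In the non--concentric cases one must first translate between ``radial about $y$'' and ``about $x$''. When $\left|x-y\right|+\epsilon_{2}<\epsilon_{1}$ the sphere $S_{\epsilon_{1}}\left(x\right)$ strictly encloses $S_{\epsilon_{2}}\left(y\right)$ and $\overline{B_{\epsilon_{2}}\left(y\right)}\subset B_{\epsilon_{1}}\left(x\right)$; using the symmetric form $\langle u_{\sigma_{\epsilon_{1}}^{x}},\nu\rangle$, the relevant restriction of the $S_{\epsilon_{1}}\left(x\right)$--potential is a regular (about $x$) solution, and averaging the pieces solving $\left(I-\Delta\right)v=0$ over $S_{\epsilon_{2}}\left(y\right)$ is governed by the mean--value identity for $I-\Delta$ in $\mathbb{R}^{4}$ (average over $S_{\rho}\left(p\right)$ equals $2\rho^{-1}I_{1}\left(\rho\right)$ times the center value, its $\rho$--derivative bringing in $I_{2}$); this produces the entries of $\mathbf C\left(\left|x-y\right|\right)$, while the remaining $\epsilon_{1}$-- and $\epsilon_{2}$--data contribute $\mathbf A\left(\epsilon_{1}\right)$ and $\mathbf B^{\top}\left(\epsilon_{2}\right)$, giving \eqref{eq:vector covariance inclusion}. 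When $\left|x-y\right|>\epsilon_{1}+\epsilon_{2}$ the ball $\overline{B_{\epsilon_{1}}\left(x\right)}$ misses $S_{\epsilon_{2}}\left(y\right)$, so near $x$ the potential $u_{\sigma_{\epsilon_{2}}^{y}}$ is regular at $x$; expanding the decaying ($K$--type) combination about $y$ into regular ($I$--type) solutions about $x$ via the Gegenbauer addition theorem for modified Bessel functions gives coefficients carrying $K_{1}\left(\left|x-y\right|\right),K_{2}\left(\left|x-y\right|\right)$ --- the entries of $\mathbf D\left(\left|x-y\right|\right)$ --- and the reading--off over $S_{\epsilon_{1}}\left(x\right)$ now picks out the $I$--part, contributing $\mathbf B\left(\epsilon_{1}\right)$ rather than $\mathbf A\left(\epsilon_{1}\right)$; this yields \eqref{eq:vector covariance nonoverlap}.

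\emph{Main obstacle.} Conceptually the argument is a chain of standard moves (Green kernel, modified Bessel ODE, mean--value and addition theorems, the degenerate--kernel Markov criterion). The real work, and the only place something can go wrong, is the bookkeeping: making every Bessel recurrence and every normalization constant --- the $2\pi^{2}$ of $S^{3}$, the $\left(2\pi\right)^{-4}$, the Wronskians, the jump--condition constants, and in particular the factor $2$ discrepancy between $-1/4\pi^{2}$ in \eqref{eq:vector covariance concentric} and $-1/2\pi^{2}$ in \eqref{eq:vector covariance inclusion}--\eqref{eq:vector covariance nonoverlap}, which must emerge from comparing a ``both spheres about the same center'' computation with a genuine addition--theorem expansion (note the consistency check $-\tfrac{1}{2\pi^{2}}\mathbf C\left(0\right)=-\tfrac{1}{2\pi^{2}}\cdot\tfrac12 I=-\tfrac{1}{4\pi^{2}}I$, since $I_{1}\left(r\right)/r\to\tfrac12$, $I_{2}\left(0\right)=0$) --- combine exactly into the stated $\mathbf A,\mathbf B,\mathbf C,\mathbf D$, with all signs. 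I would organize this by first establishing the scalar mean--value identity for $I-\Delta$ in $\mathbb{R}^{4}$, the needed addition--theorem expansion, and the invertibility of $\mathbf A\left(r\right),\mathbf B\left(r\right)$ for $r>0$ as separate facts, and only then assembling the $2\times2$ matrices.
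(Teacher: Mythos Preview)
Your outline is correct and would work, but the paper takes a somewhat different and slicker route. Rather than attack the Green function of $\left(I-\Delta\right)^{2}$ directly---which forces you to solve a fourth--order radial ODE with a four--dimensional solution space and multiple jump conditions---the paper stays on the Fourier side, writes each covariance entry as a one--dimensional integral with weight $\left(1+\tau^{2}\right)^{-2}$ (these are exactly the integrals \eqref{eq:var sphe ave integral form}--\eqref{eq:cov sphe ave deriv integral form}), and then observes that $\left(1+\tau^{2}\right)^{-2}=-\tfrac{1}{2p}\tfrac{d}{dp}\big|_{p=1}\left(p^{2}+\tau^{2}\right)^{-1}$. This reduces the problem to evaluating the \emph{second}--order resolvent integrals $\int_{0}^{\infty}\tfrac{\tau}{\tau^{2}+p^{2}}J_{1}\left(a\tau\right)J_{1}\left(b\tau\right)\,d\tau=K_{1}\left(ap\right)I_{1}\left(bp\right)$ and its triple--$J_{1}$ analogue; the paper proves these by your very ODE argument but for the simpler operator $p^{2}-\Delta$, where only the two--dimensional kernel $\left\{r^{-1}I_{1},r^{-1}K_{1}\right\}$ and a single boundary determination are needed. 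The non--concentric cases then fall out of the triple integral formula with $a=\epsilon_{1}$ or $a=\left|x-y\right|$ according to which is largest---no explicit Gegenbauer addition theorem is invoked. The $p$--differentiation and the $\epsilon_{1},\epsilon_{2}$--differentiations then produce $\mathbf{A},\mathbf{B},\mathbf{C},\mathbf{D}$ mechanically, and the factor--of--two discrepancy you flagged between \eqref{eq:vector covariance concentric} and \eqref{eq:vector covariance inclusion}--\eqref{eq:vector covariance nonoverlap} emerges transparently from comparing $B\left(a,b\right)$ with $C\left(a,b,c\right)=\tfrac{2I_{1}\left(cp\right)}{cp}B\left(a,b\right)$. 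Your approach is more geometric and self--contained; the paper's buys a cleaner bookkeeping by trading the fourth--order operator for a parametric family of second--order ones. For the reversed Markov property, your degenerate--kernel argument is exactly what the paper does in the paragraph following the lemma.
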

The proof of (\ref{eq:vector covariance concentric})-(\ref{eq:vector covariance nonoverlap})
relies heavily on the integral formulas and identities of Bessel functions.
The complete detailed computations are given in the appendix. Here
we only make the following observations.

First, we claim that the distribution of $\left\{ V_{\epsilon}^{x}:x\in\mathbb{R}^{4},\epsilon>0\right\} $
is invariant under isometries in spatial variables in the sense that
$\left\{ V_{\epsilon}^{\mathcal{T}\left(x\right)}:x\in\mathbb{R}^{4},\epsilon>0\right\} $
has exactly the same distribution as $\left\{ V_{\epsilon}^{x}:x\in\mathbb{R}^{4},\epsilon>0\right\} $
for every $\mathcal{T}:\mathbb{R}^{4}\rightarrow\mathbb{R}^{4}$ satisfying
$\left|\mathcal{T}\left(x\right)-\mathcal{T}\left(y\right)\right|=\left|x-y\right|$
for all $x,y\in\mathbb{R}^{4}$. Perhaps the most straightforward
way to see this is to write down the covariance matrix of the family,
or equivalently, the $H^{-2}$ inner product of $\left\{ \sigma_{\epsilon}^{x},d\sigma_{\epsilon}^{x}:x\in\mathbb{R}^{4},\epsilon>0\right\} $
in the integral form using (\ref{eq:fourier transf. of average})
and (\ref{eq:fourier transf. for derivative of average}). We will
actually do this in the appendix (formulas (\ref{eq:var sphe ave integral form})-(\ref{eq:cov sphe ave deriv integral form})).
The result shows that the only dependence of the covariance matrix
on spatial variables is through the distance between centers of the
spheres that are involved.

Second, we point out that all the matrices above $\mathbf{A}\left(r\right)$,
$\mathbf{B}\left(r\right)$, $\mathbf{C}\left(r\right)$ and $\mathbf{D}\left(r\right)$
are invertible for all $r>0$. This fact can certainly be verified
by direct computations using the explicit formulas given above, but
it also follows, more generally, from the simple fact that $d\sigma_{\epsilon}^{x}$
is linearly independent of $\sigma_{\epsilon}^{y}$ for every $x,y\in\mathbb{R}^{4}$
and $\epsilon>0$. Therefore, assuming (\ref{eq:vector covariance concentric})
is true, then given $x$ fixed and $\epsilon_{1}\geq\epsilon_{2}>0$,
the conditional expectation of $V_{\epsilon_{2}}^{x}$ conditioning
on $V_{\epsilon_{1}}^{x}$ equals 
\[
\mathbb{E}^{\mathcal{W}}\left[V_{\epsilon_{2}}^{x}\left(V_{\epsilon_{1}}^{x}\right)^{\top}\right]\left(\mathbb{E}^{\mathcal{W}}\left[V_{\epsilon_{1}}^{x}\left(V_{\epsilon_{1}}^{x}\right)^{\top}\right]\right)^{-1}V_{\epsilon_{1}}^{x}=\mathbf{B}\left(\epsilon_{2}\right)\mathbf{B}^{-1}\left(\epsilon_{1}\right)V_{\epsilon_{1}}^{x}.
\]
On the other hand, we observe that for all $\eta\geq\epsilon_{1}$,
\[
\mathbb{E}^{\mathcal{W}}\left[\left(V_{\epsilon_{2}}^{x}-\mathbf{B}\left(\epsilon_{2}\right)\mathbf{B}^{-1}\left(\epsilon_{1}\right)V_{\epsilon_{1}}^{x}\right)\left(V_{\eta}^{x}\right)^{\top}\right]=0.
\]
This means, 
\[
V_{\epsilon_{2}}^{x}-\mathbf{B}\left(\epsilon_{2}\right)\mathbf{B}^{-1}\left(\epsilon_{1}\right)V_{\epsilon_{1}}^{x}
\]
is independent of $V_{\eta}^{x}$ for all $\eta\geq\epsilon_{1}$,
which certainly implies the reversed Markov property.

Next, we observe that under the circumstances as prescribed in Lemma
\ref{lem:on V_t(x)}, the covariance matrix of $\left\{ V_{\epsilon}^{x}:x\in\mathbb{R}^{4},\epsilon>0\right\} $
is ``separable'' in the sense that it splits into factors each of
which only depends on one of the variables $\epsilon_{1}$, $\epsilon_{2}$
and $\left|x-y\right|$. A second look at the formulas (\ref{eq:vector covariance concentric})-(\ref{eq:vector covariance nonoverlap})
suggests that we should ``normalize'' $V_{\epsilon}^{x}$ by $\mathbf{B}^{-1}\left(\epsilon\right)$.
Namely, if denote $U_{\epsilon}^{x}\equiv\mathbf{B}^{-1}\left(\epsilon\right)V_{\epsilon}^{x}$,
then the previous observations imply that given $x$ fixed, $\left\{ U_{\epsilon}^{x}:\epsilon>0\right\} $
is a vector-valued Gaussian process with independent (reversed) increment
whose distribution does not depend on $x$. Moreover, (\ref{eq:vector covariance inclusion})
and (\ref{eq:vector covariance nonoverlap}) show that $\mathbb{E}^{\mathcal{W}}\left[U_{\epsilon_{1}}^{x}\left(U_{\epsilon_{2}}^{y}\right)^{\top}\right]$
only depends on $\epsilon_{1}$ and $\left|x-y\right|$ when%
\footnote{The notation ``$B_{r}\left(x\right)$'' (``$\overline{B_{r}\left(x\right)}$'')
denotes the open (closed) ball centered at $x$ with radius $r$.%
} $\overline{B_{\epsilon_{2}}\left(y\right)}\subseteq B_{\epsilon_{1}}\left(x\right)$,
and the same matrix only depends on $\left|x-y\right|$ when $\overline{B_{\epsilon_{2}}\left(y\right)}\cap\overline{B_{\epsilon_{1}}\left(x\right)}=\emptyset$.
Because $U_{\epsilon}^{x}$ has these properties, we are one step
closer to finding a plausible replacement for the circular average
of the two dimensional GFF.

Clearly, for any constant $\zeta=\left(\zeta_{1},\zeta_{2}\right)^{\top}\in\mathbb{R}^{2}$,
$\left(U_{\epsilon}^{x},\zeta\right)_{\mathbb{R}^{2}}$ is a scalar
valued Gaussian random variable (in fact, it is a Paley-Wiener integral),
which, when parametrized by $x\in\mathbb{R}^{4}$ and $\epsilon>0$,
forms a Gaussian family that preserves the properties described above.
Our goal is to find a proper $\zeta\in\mathbb{R}^{2}$ such that the
random variable 
\[
\theta\in\Theta\mapsto\left(U_{\epsilon}^{x},\zeta\right)_{\mathbb{R}^{2}}\left(\theta\right)=\zeta^{\top}\mathbf{B}^{-1}\left(\epsilon\right)V_{\epsilon}^{x}\left(\theta\right)
\]
becomes a ``legitimate'' approximation for a multiple of the value
of $\theta$ at point $x$ for every $x\in\mathbb{R}^{4}$. Namely,
we want to choose $\zeta$ so that if $\mu_{\epsilon}^{x}\in H^{-2}$
is given by 
\begin{equation}
\mu_{\epsilon}^{x}\equiv\zeta^{\top}\mathbf{B}^{-1}\left(\epsilon\right)\left(\begin{array}{c}
\sigma_{\epsilon}^{x}\\
d\sigma_{\epsilon}^{x}
\end{array}\right),\label{eq:mu_epsilon def}
\end{equation}
then $\mu_{\epsilon}^{x}$ converges to a constant multiple of the
point mass $\delta_{x}$ at $x$ as $\epsilon\downarrow0$ in the
sense of tempered distribution. We can reach this goal by writing
down the formula of $\mathbf{B}^{-1}\left(\epsilon\right)$ explicitly
and examining the asymptotics of the Bessel functions near the origin
(detailed computations are given in the appendix). As a result, we
find that $\zeta=\left(1,1\right)^{\top}$ will serve the purpose,
in which case $\mu_{\epsilon}^{x}\rightarrow2\delta_{x}$ as $\epsilon\downarrow0$
for every $x\in\mathbb{R}^{4}$. From now on, we will assume $\mu_{\epsilon}^{x}$
is as in (\ref{eq:mu_epsilon def}) with $\zeta=\left(1,1\right)^{\top}$.
Since $\mathcal{I}\left(h{}_{\mu_{\epsilon}^{x}}\right)=\zeta^{\top}\mathbf{B}^{-1}\left(\epsilon\right)V_{\epsilon}^{x}$,
we can transfer the results in Lemma \ref{lem:on V_t(x)} to the Gaussian
family $\left\{ \mathcal{I}\left(h{}_{\mu_{\epsilon}^{x}}\right):x\in\mathbb{R}^{4},\epsilon>0\right\} $. 
\begin{thm}
\label{thm:mu_epsilon}Define the positive function $G:r\in\left(0,\infty\right)\mapsto G\left(r\right)\in\left(0,\infty\right)$
by 
\begin{eqnarray}
\begin{split}G\left(r\right) & \equiv\left(-\frac{1}{4\pi^{2}}\right)\frac{2I_{1}\left(r\right)K_{1}\left(r\right)+2I_{2}\left(r\right)K_{0}\left(r\right)-1}{I_{1}^{2}\left(r\right)-I_{0}\left(r\right)I_{2}\left(r\right)}.\end{split}
\label{eq: variance G}
\end{eqnarray}
Then, we have\\
(1), given $x\in\mathbb{R}^{4}$ and $\epsilon_{1}\geq\epsilon_{2}>0$\emph{,
\begin{equation}
\mathbb{E}^{\mathcal{W}}\left[\mathcal{I}\left(h_{\mu_{\epsilon_{1}}^{x}}\right)\mathcal{I}\left(h_{\mu_{\epsilon_{2}}^{x}}\right)\right]=\mathbb{E}^{\mathcal{W}}\left[\mathcal{I}^{2}\left(h_{\mu_{\epsilon_{1}}^{x}}\right)\right]=G\left(\epsilon_{1}\right).\label{eq:cov concentric}
\end{equation}
}In particular, $\left\{ \mathcal{I}\left(h_{\mu_{\epsilon}^{x}}\right):\epsilon>0\right\} $
is a Gaussian process with independent reversed increments in the
sense that $\mathcal{I}\left(h_{\mu_{\epsilon_{2}}^{x}}\right)-\mathcal{I}\left(h_{\mu_{\epsilon_{1}}^{x}}\right)$
is independent of $\sigma\left\{ \mathcal{I}\left(h_{\mu_{\eta}^{x}}\right):\eta\geq\epsilon_{1}\right\} $.\\
(2), given $x,y\in\mathbb{R}^{4}$, $x\neq y$, and $\epsilon_{1},\epsilon_{2}>0$
with $\epsilon_{1}>\left|x-y\right|+\epsilon_{2}$, \emph{
\begin{equation}
\begin{split}\mathbb{E}^{\mathcal{W}}\left[\mathcal{I}\left(h_{\mu_{\epsilon_{1}}^{x}}\right)\mathcal{I}\left(h_{\mu_{\epsilon_{2}}^{y}}\right)\right]=I_{0}\left(\left|x-y\right|\right)G\left(\epsilon_{1}\right)-\frac{1}{4\pi^{2}}\frac{I_{2}\left(\left|x-y\right|\right)}{I_{1}^{2}\left(\epsilon_{1}\right)-I_{0}\left(\epsilon_{1}\right)I_{2}\left(\epsilon_{1}\right)}.\end{split}
\label{eq:cov inclusion}
\end{equation}
}(3), given $x,y\in\mathbb{R}^{4}$, $x\neq y$, and $\epsilon_{1},\epsilon_{2}>0$
with $\left|x-y\right|>\epsilon_{1}+\epsilon_{2}$,\emph{
\begin{equation}
\mathbb{E}^{\mathcal{W}}\left[\mathcal{I}\left(h_{\mu_{\epsilon_{1}}^{x}}\right)\mathcal{I}\left(h_{\mu_{\epsilon_{2}}^{y}}\right)\right]=\frac{1}{2\pi^{2}}K_{0}\left(\left|x-y\right|\right).\label{eq:cov nonoverlap}
\end{equation}
}
\end{thm}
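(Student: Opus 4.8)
The plan is to transport the matrix covariance identities of Lemma~\ref{lem:on V_t(x)} to the scalar family $\left\{\mathcal{I}\left(h_{\mu_{\epsilon}^{x}}\right)\right\}$ through the relation $\mathcal{I}\left(h_{\mu_{\epsilon}^{x}}\right)=\zeta^{\top}\mathbf{B}^{-1}\left(\epsilon\right)V_{\epsilon}^{x}$ with $\zeta=\left(1,1\right)^{\top}$, exploiting the fact that the normalization by $\mathbf{B}^{-1}$ was chosen precisely so that the factor $\mathbf{B}^{\top}\left(\epsilon_{2}\right)$ sitting on the right of each of (\ref{eq:vector covariance concentric})--(\ref{eq:vector covariance nonoverlap}) gets cancelled. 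Writing a scalar Paley--Wiener integral as its own transpose,
\[
\mathbb{E}^{\mathcal{W}}\!\left[\mathcal{I}\left(h_{\mu_{\epsilon_{1}}^{x}}\right)\mathcal{I}\left(h_{\mu_{\epsilon_{2}}^{y}}\right)\right]=\zeta^{\top}\mathbf{B}^{-1}\left(\epsilon_{1}\right)\,\mathbb{E}^{\mathcal{W}}\!\left[V_{\epsilon_{1}}^{x}\left(V_{\epsilon_{2}}^{y}\right)^{\top}\right]\left(\mathbf{B}^{\top}\left(\epsilon_{2}\right)\right)^{-1}\zeta ,
\]
and, since $\mathbf{B}\left(r\right)$ is invertible for every $r>0$, substituting the three formulas of Lemma~\ref{lem:on V_t(x)} collapses the right-hand side to $-\frac{1}{4\pi^{2}}\zeta^{\top}\mathbf{B}^{-1}\left(\epsilon_{1}\right)\mathbf{A}\left(\epsilon_{1}\right)\zeta$ in the concentric case, to $-\frac{1}{2\pi^{2}}\zeta^{\top}\mathbf{B}^{-1}\left(\epsilon_{1}\right)\mathbf{A}\left(\epsilon_{1}\right)\mathbf{C}\left(\left|x-y\right|\right)\zeta$ in the inclusion case, and --- with \emph{both} normalizing factors disappearing --- to $-\frac{1}{2\pi^{2}}\zeta^{\top}\mathbf{D}\left(\left|x-y\right|\right)\zeta$ in the non-overlapping case. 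Observe that the first two of these no longer involve $\epsilon_{2}$.

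For part (1) I would simply \emph{define} $G\left(\epsilon_{1}\right):=-\frac{1}{4\pi^{2}}\zeta^{\top}\mathbf{B}^{-1}\left(\epsilon_{1}\right)\mathbf{A}\left(\epsilon_{1}\right)\zeta$; it is nonnegative because it equals $\mathbb{E}^{\mathcal{W}}\left[\mathcal{I}^{2}\left(h_{\mu_{\epsilon_{1}}^{x}}\right)\right]$, and strictly positive since $\mu_{\epsilon_{1}}^{x}\neq 0$ in $H^{-2}$, and this already gives (\ref{eq:cov concentric}). The closed form (\ref{eq: variance G}) follows by writing $\mathbf{B}^{-1}\left(r\right)$ out via Cramer's rule and simplifying the resulting combination of $I_{0},I_{1},I_{2},K_{0},K_{1},K_{2}$ and of the derivatives entering $\mathbf{A}\left(r\right)$ with the recurrences $I_{\nu}'=I_{\nu-1}-\frac{\nu}{r}I_{\nu}=\frac{1}{2}\left(I_{\nu-1}+I_{\nu+1}\right)$ and their $K$-analogues, Bessel's equation, and the Wronskian $I_{\nu}\left(r\right)K_{\nu+1}\left(r\right)+I_{\nu+1}\left(r\right)K_{\nu}\left(r\right)=1/r$; these reduce the determinant of $\mathbf{B}\left(r\right)$ (up to the overall factor $1/r$) to $I_{1}^{2}\left(r\right)-I_{0}\left(r\right)I_{2}\left(r\right)$ and the numerator to $2I_{1}\left(r\right)K_{1}\left(r\right)+2I_{2}\left(r\right)K_{0}\left(r\right)-1$. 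The independent-reversed-increment claim is then purely formal: for $\eta\geq\epsilon_{1}\geq\epsilon_{2}$, (\ref{eq:cov concentric}) gives $\mathbb{E}^{\mathcal{W}}\left[\left(\mathcal{I}\left(h_{\mu_{\epsilon_{2}}^{x}}\right)-\mathcal{I}\left(h_{\mu_{\epsilon_{1}}^{x}}\right)\right)\mathcal{I}\left(h_{\mu_{\eta}^{x}}\right)\right]=G\left(\eta\right)-G\left(\eta\right)=0$, so by joint Gaussianity the increment is independent of every $\mathcal{I}\left(h_{\mu_{\eta}^{x}}\right)$ with $\eta\geq\epsilon_{1}$, hence of $\sigma\left\{\mathcal{I}\left(h_{\mu_{\eta}^{x}}\right):\eta\geq\epsilon_{1}\right\}$.

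For part (2), $\mathbf{C}\left(\left|x-y\right|\right)\zeta=\left(I_{1}\left(\rho\right)/\rho,\,I_{2}\left(\rho\right)+I_{1}\left(\rho\right)/\rho\right)^{\top}$ with $\rho=\left|x-y\right|$; combining this with the row vector $\zeta^{\top}\mathbf{B}^{-1}\left(\epsilon_{1}\right)\mathbf{A}\left(\epsilon_{1}\right)$ already computed in part (1), peeling off the $I_{1}\left(\rho\right)/\rho$ contribution and using $I_{0}\left(\rho\right)=I_{2}\left(\rho\right)+\frac{2}{\rho}I_{1}\left(\rho\right)$ rewrites the answer as $I_{0}\left(\rho\right)G\left(\epsilon_{1}\right)$ plus a multiple of $I_{2}\left(\rho\right)$, the multiple being $-\frac{1}{4\pi^{2}}\left(I_{1}^{2}\left(\epsilon_{1}\right)-I_{0}\left(\epsilon_{1}\right)I_{2}\left(\epsilon_{1}\right)\right)^{-1}$, which is (\ref{eq:cov inclusion}). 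For part (3), $\zeta^{\top}\mathbf{D}\left(\rho\right)\zeta=-K_{2}\left(\rho\right)+\frac{2}{\rho}K_{1}\left(\rho\right)$, and $K_{2}\left(\rho\right)=K_{0}\left(\rho\right)+\frac{2}{\rho}K_{1}\left(\rho\right)$ collapses it to $-K_{0}\left(\rho\right)$, giving (\ref{eq:cov nonoverlap}). The only genuinely laborious ingredient is the explicit evaluation of $G$ in part (1) together with the identification of the $I_{2}$-coefficient in part (2); these are routine but lengthy manipulations of Bessel identities that, exactly as for Lemma~\ref{lem:on V_t(x)}, are best relegated to the appendix, everything else reducing to bookkeeping with $2\times 2$ matrices and one-line consequences of joint Gaussianity.
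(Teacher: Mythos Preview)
Your proposal is correct and follows exactly the approach taken in the paper: compress Lemma~\ref{lem:on V_t(x)} via $\mathcal{I}\left(h_{\mu_{\epsilon}^{x}}\right)=\zeta^{\top}\mathbf{B}^{-1}\left(\epsilon\right)V_{\epsilon}^{x}$ so that the $\mathbf{B}^{\top}\left(\epsilon_{2}\right)$ factor cancels, then reduce the resulting $2\times 2$ quadratic forms with the standard Bessel recurrences and the Wronskian identity; the paper carries out precisely these computations in the appendix, including the explicit forms of $\mathbf{B}^{-1}\left(\epsilon\right)$ and $\mathbf{B}^{-1}\left(\epsilon\right)\mathbf{A}\left(\epsilon\right)$ that you outline. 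Your derivation of the independent-reversed-increment property from (\ref{eq:cov concentric}) via joint Gaussianity is likewise the intended argument.
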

There is not much to be said about the proof since everything follows
from straightforward computations and the results in Lemma \ref{lem:on V_t(x)}.
However, we should point out the following facts. 

First, it is an easy matter to check that $G:\left(0,\infty\right)\rightarrow\left(0,\infty\right)$
is smooth and strictly decreasing with $\lim_{r\downarrow0}G\left(r\right)=+\infty$
and $\lim_{r\uparrow\infty}G\left(r\right)=0$. Therefore, $G^{-1}$
is defined and also strictly decreasing on $\left(0,\infty\right)$.
Fix a positive constant $R$, and for every $r\in(0,R]$, define 
\begin{equation}
0\leq t\equiv G\left(r\right)-G\left(R\right)\mbox{ and }X_{t}\equiv\mathcal{I}\left(h_{\mu_{G^{-1}\left(t+G\left(R\right)\right)}^{x}}\right)-\mathcal{I}\left(h_{\mu_{R}^{x}}\right).\label{eq:def of X_t}
\end{equation}
Then $\left\{ X_{t}:t\geq0\right\} $, as a Gaussian process on $\Theta$
under $\mathcal{W}$, has the same distribution as the standard Brownian
motion, which in particular is independent of the choice of $x$.
In other words, $\left\{ \mathcal{I}\left(h{}_{\mu_{\epsilon}^{x}}\right):0<\epsilon\leq R\right\} $
has the distribution of a Brownian motion up to a non-random time
change.

Second, the formulas (\ref{eq:cov inclusion}) and (\ref{eq:cov nonoverlap})
indicate that the covariance function does not depend on $\epsilon_{2}$
when $\overline{B_{\epsilon_{2}}\left(y\right)}\subseteq B_{\epsilon_{1}}\left(x\right)$
and does not even depend on $\epsilon_{1}$ or $\epsilon_{2}$ when
$\overline{B_{\epsilon_{2}}\left(y\right)}\cap\overline{B_{\epsilon_{1}}\left(x\right)}=\emptyset$.
If one does the calculation with circular averages of the two dimensional
GFF in each case analogously, one would see the same phenomenon. Namely,
the smaller radius does not appear in the covariance function if one
circle is entirely contained in the disk bounded by the other circle,
while neither of the radii matters if the two disks bounded by the
two circles respectively don't intersect. Such properties are consequences
of the mean value theorem applied to the Green's function of the Laplace
operator $\Delta$ in two dimensions, and these special properties
of harmonic functions are no longer available to us in four dimensions.
Nonetheless, we have seen from the above that by substituting$\left\{ \mathcal{I}\left(h{}_{\mu_{\epsilon}^{x}}\right):x\in\mathbb{R}^{4},\epsilon>0\right\} $
for the circular average, we will recover in the four-dimensional
setting properties similar to those in two dimensions.

Finally, by examining the asymptotics of the Bessel functions near
the origin, one finds that function $G$ as defined in (\ref{eq: variance G})
is asymptotic to $-\frac{1}{2\pi^{2}}\log r$ when $r$ is small.
Therefore, in both case (1) and case (2) from above, the covariance
function is asymptotic to $-\frac{1}{2\pi^{2}}\log\epsilon_{1}$ for
sufficiently small $\epsilon_{1}$, while in case (3), the right hand
side of (\ref{eq:cov nonoverlap}) is asymptotic to $-\frac{1}{2\pi^{2}}\log\left|x-y\right|$
for sufficiently small $\left|x-y\right|$. In this sense, the covariance
function of $\left\{ \mathcal{I}\left(h{}_{\mu_{\epsilon}^{x}}\right):x\in\mathbb{R}^{4},\epsilon>0\right\} $
does have logarithmic growth near diagonal as one would have expected. 

By now, one should be able to believe that $\left\{ \mathcal{I}\left(h{}_{\mu_{\epsilon}^{x}}\right):x\in\mathbb{R}^{4},\epsilon>0\right\} $
is a reasonable replacement for the circular average of the 2D GFF
in order to construct a random measure on $\mathbb{R}^{4}$. Indeed,
the construction based on this Gaussian family will be carried out
in the next section. We close this section with an important observation
about $\mathcal{I}\left(h_{\mu_{\epsilon}^{x}}\right)$ as a mapping
from the variable $x\in\mathbb{R}^{4}$ to a random variable on $\Theta$
under $\mathcal{W}$.

\begin{cor}
\label{cor:a.e. continuity in x}Given $\epsilon>0$, the mapping
\textup{$x\in\mathbb{R}^{4}\mapsto\mathcal{I}\left(h_{\mu_{\epsilon}^{x}}\right)\in L^{2}\left(\mathcal{W}\right)$}
is continuous. Moreover, if $\alpha\in\left(0,\frac{1}{2}\right)$,
then for almost every $\theta\in\Theta$ , \textup{$x\in\mathbb{R}^{4}\mapsto\mathcal{I}\left(h_{\mu_{\epsilon}^{x}}\right)\left(\theta\right)\in\mathbb{R}$}
is Hölder continuous with Hölder constant $\alpha$.\end{cor}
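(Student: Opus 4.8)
The plan is to establish the two assertions separately, with the Kolmogorov continuity criterion doing the work for the almost-sure statement.

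For the $L^{2}(\mathcal{W})$-continuity in $x$, I would compute the squared $L^{2}(\mathcal{W})$-distance directly: for $x,y\in\mathbb{R}^{4}$,
\[
\mathbb{E}^{\mathcal{W}}\bigl[\bigl(\mathcal{I}(h_{\mu_{\epsilon}^{x}})-\mathcal{I}(h_{\mu_{\epsilon}^{y}})\bigr)^{2}\bigr]=\mathbb{E}^{\mathcal{W}}[\mathcal{I}^{2}(h_{\mu_{\epsilon}^{x}})]+\mathbb{E}^{\mathcal{W}}[\mathcal{I}^{2}(h_{\mu_{\epsilon}^{y}})]-2\,\mathbb{E}^{\mathcal{W}}[\mathcal{I}(h_{\mu_{\epsilon}^{x}})\mathcal{I}(h_{\mu_{\epsilon}^{y}})].
\]
By part (1) of Theorem \ref{thm:mu_epsilon} the first two terms equal $2G(\epsilon)$ and do not depend on $x,y$. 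For the cross term I would fall back on the original representation $\mathcal{I}(h_{\mu_{\epsilon}^{x}})=\zeta^{\top}\mathbf{B}^{-1}(\epsilon)V_{\epsilon}^{x}$ together with the integral form of the covariance of $\{V_{\epsilon}^{x}\}$ (the formulas promised in the appendix, equations \eqref{eq:var sphe ave integral form}--\eqref{eq:cov sphe ave deriv integral form}), which are expressed through the Fourier transforms \eqref{eq:fourier transf. of average} and \eqref{eq:fourier transf. for derivative of average} and depend on $x,y$ only through the factor $e^{i(x-y,\xi)_{\mathbb{R}^{4}}}$. Since those Fourier transforms lie in $L^{2}((1+|\xi|^{2})^{-2}d\xi)$, dominated convergence gives that the cross term is continuous in $|x-y|$ and tends to $2G(\epsilon)$ as $|x-y|\to 0$; hence the displayed quantity tends to $0$, which is the claimed $L^{2}(\mathcal{W})$-continuity. (One could equivalently quote that for $|x-y|<\epsilon$ one is in regime (2) of Theorem \ref{thm:mu_epsilon}, whose right-hand side is manifestly continuous in $|x-y|$ with limit $G(\epsilon)$ as $|x-y|\downarrow 0$.)

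For the almost-sure Hölder continuity I would upgrade this to a quantitative bound. Because $\mathcal{I}(h_{\mu_{\epsilon}^{x}})-\mathcal{I}(h_{\mu_{\epsilon}^{y}})$ is a centered Gaussian random variable, all its moments are controlled by its variance: for every $p\ge 1$ there is $C_{p}$ with
\[
\mathbb{E}^{\mathcal{W}}\bigl[\,\bigl|\mathcal{I}(h_{\mu_{\epsilon}^{x}})-\mathcal{I}(h_{\mu_{\epsilon}^{y}})\bigr|^{p}\,\bigr]=C_{p}\,\Bigl(\mathbb{E}^{\mathcal{W}}\bigl[\bigl(\mathcal{I}(h_{\mu_{\epsilon}^{x}})-\mathcal{I}(h_{\mu_{\epsilon}^{y}})\bigr)^{2}\bigr]\Bigr)^{p/2}.
\]
The key analytic input is then that the variance is Lipschitz in $|x-y|$ locally: from the integral formula above, $2G(\epsilon)-2\,\mathbb{E}^{\mathcal{W}}[\mathcal{I}(h_{\mu_{\epsilon}^{x}})\mathcal{I}(h_{\mu_{\epsilon}^{y}})]$ is a fixed positive multiple of $\int (1-\cos(x-y,\xi))\,|\widehat{\mu_{\epsilon}^{0}}(\xi)|^{2}(1+|\xi|^{2})^{-2}\,d\xi$ — note $\zeta^{\top}\mathbf{B}^{-1}(\epsilon)(\widehat{\sigma_{\epsilon}^{0}},\widehat{d\sigma_{\epsilon}^{0}})^{\top}$ is a smooth, rapidly decaying function of $\xi$ once $\epsilon>0$ is fixed, since $\widehat{\sigma_{\epsilon}^{x}},\widehat{d\sigma_{\epsilon}^{x}}$ decay like $|\xi|^{-3/2}$ — and using $1-\cos(u)\le \tfrac12 u^{2}$ bounds this by $c(\epsilon)\,|x-y|^{2}\int |\xi|^{2}|\widehat{\mu_{\epsilon}^{0}}(\xi)|^{2}(1+|\xi|^{2})^{-2}\,d\xi\le C(\epsilon)\,|x-y|^{2}$, the last integral being finite. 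Therefore
\[
\mathbb{E}^{\mathcal{W}}\bigl[\,\bigl|\mathcal{I}(h_{\mu_{\epsilon}^{x}})-\mathcal{I}(h_{\mu_{\epsilon}^{y}})\bigr|^{p}\,\bigr]\le C_{p}(\epsilon)\,|x-y|^{p}.
\]
Taking $p>4$, the Kolmogorov--Čentsov continuity theorem applies on $\mathbb{R}^{4}$ (working on a countable exhaustion by balls and using separability to patch) and yields, for $\mathcal{W}$-a.e.\ $\theta$, a version of $x\mapsto\mathcal{I}(h_{\mu_{\epsilon}^{x}})(\theta)$ that is locally Hölder of every exponent $\alpha<(p-4)/p$; letting $p\to\infty$ gives every $\alpha\in(0,1)$, in particular every $\alpha\in(0,\tfrac12)$ as stated. (One should note that the Gaussian family $\{\mathcal{I}(h_{\mu_{\epsilon}^{x}})\}_{x}$ already comes with a canonical choice of measurable representative, e.g.\ through the Paley--Wiener construction, so this "version" is the intended object.)

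The main obstacle is not the probabilistic machinery, which is standard once a variance estimate of the form $\mathrm{Var}\le C(\epsilon)|x-y|^{2}$ is in hand, but verifying that estimate cleanly: one must confirm that for fixed $\epsilon>0$ the combined symbol $\zeta^{\top}\mathbf{B}^{-1}(\epsilon)(\widehat{\sigma_{\epsilon}^{0}}(\xi),\widehat{d\sigma_{\epsilon}^{0}}(\xi))^{\top}$ decays fast enough in $\xi$ — equivalently that $\mu_{\epsilon}^{0}\in H^{-1}(\mathbb{R}^{4})$, so that $\int|\xi|^{2}|\widehat{\mu_{\epsilon}^{0}}(\xi)|^{2}(1+|\xi|^{2})^{-2}d\xi<\infty$ — which follows from the $J_{k}(r)\sim r^{-1/2}$ asymptotics already invoked after \eqref{eq:fourier transf. of average}, but requires checking that the $\epsilon$-dependent prefactors $\mathbf{B}^{-1}(\epsilon)$ do not spoil this. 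This is precisely the kind of Bessel-function bookkeeping the authors defer to the appendix, so in the main text I would simply cite the explicit covariance formulas established there and run the Gaussian-moment/Kolmogorov argument above.
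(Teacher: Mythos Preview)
Your overall strategy---bound the variance and feed Gaussian moments into Kolmogorov's criterion---matches the paper's, but your variance estimate has a gap. You assert that $\widehat{\sigma_\epsilon^x}$ and $\widehat{d\sigma_\epsilon^x}$ both decay like $|\xi|^{-3/2}$, hence $\mu_\epsilon^0\in H^{-1}(\mathbb{R}^4)$ and the variance is $O(|x-y|^2)$. The first decay is correct, but from \eqref{eq:fourier transf. for derivative of average} one has $\widehat{d\sigma_\epsilon^x}(\xi)=-2\epsilon^{-1}J_2(\epsilon|\xi|)$, and since $J_2(r)\sim r^{-1/2}$ this decays only like $|\xi|^{-1/2}$. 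Consequently $|\widehat{\mu_\epsilon^0}(\xi)|^2\sim|\xi|^{-1}$ at infinity (the $d\sigma$ term dominates, and $f_2(\epsilon)\neq 0$), and
\[
\int_{\mathbb{R}^4}|\xi|^{2}\,|\widehat{\mu_\epsilon^0}(\xi)|^{2}\,(1+|\xi|^{2})^{-2}\,d\xi
\]
diverges logarithmically: the integrand behaves like $|\xi|^{-3}$, and in four dimensions $\int_{1}^{\infty} r^{-3}\cdot r^{3}\,dr=\infty$. So $\mu_\epsilon^0\notin H^{-1}$ and your $|x-y|^{2}$ bound fails.

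The fix---and this is precisely what the paper does---is to use $1-\cos u\le C_\beta|u|^{\beta}$ for $\beta\in(0,1)$ in place of $1-\cos u\le \tfrac12 u^{2}$. Then the relevant integrand is $\sim|\xi|^{\beta-5}$ at infinity, which is integrable over $\mathbb{R}^{4}$ exactly when $\beta<1$. One obtains $\mathrm{Var}\le C_{\beta,\epsilon}|x-y|^{\beta}$ for each $\beta\in(0,1)$, hence by Gaussian moments $\mathbb{E}|\cdot|^{p}\le C|x-y|^{p\beta/2}$, and Kolmogorov in dimension four gives H\"older exponent $\alpha<\beta/2-4/p$. Sending $p\to\infty$ and then $\beta\uparrow 1$ yields every $\alpha<\tfrac12$, which is exactly the range in the corollary. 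Your argument, had it been valid, would have produced every $\alpha<1$; the restriction to $\alpha\in(0,\tfrac12)$ in the statement is not slack but reflects the actual regularity available from $\mu_\epsilon^0\in H^{-2}\setminus H^{-1}$.
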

\begin{proof}
By Kolmogorov's continuity criterion (\cite{probability}, §4.3) applied
to Gaussian random variables, in order to prove both statements in
Corollary \ref{cor:a.e. continuity in x}, it would be sufficient
if  we can show that there exist constant $\beta>0$ and $0<C_{\beta,\epsilon}<\infty$
such that for every $x,y\in\mathbb{R}^{4}$, 
\[
\left\Vert \mathcal{I}\left(h_{\mu_{\epsilon}^{x}}\right)-\mathcal{I}\left(h_{\mu_{\epsilon}^{y}}\right)\right\Vert _{L^{2}\left(\mathcal{W}\right)}^{2}\leq C_{\beta,\epsilon}\left|x-y\right|^{\beta}.
\]
To simplify the notations, we write $\mu_{\epsilon}^{x}$ as $\mu_{\epsilon}^{x}\equiv f_{1}\left(\epsilon\right)\sigma_{\epsilon}^{x}+f_{2}\left(\epsilon\right)d\sigma_{\epsilon}^{x}$,
where both $f_{1}$ and $f_{2}$ are actually analytic functions in
$\epsilon\in[0,R]$ (the explicit formulas for $f_{1}$ and $f_{2}$
are given by (\ref{eq:f1 and f2}) in the appendix). Therefore, we
only need to show that both $\left\Vert \sigma_{\epsilon}^{x}-\sigma_{\epsilon}^{y}\right\Vert _{H^{-2}}^{2}$
and $\left\Vert d\sigma_{\epsilon}^{x}-d\sigma_{\epsilon}^{y}\right\Vert _{H^{-2}}^{2}$
are bounded by $C_{\beta,\epsilon}\left|x-y\right|^{\beta}$. Perhaps
the most straightforward way to see this is writing down the integral
expressions for $\left\Vert \sigma_{\epsilon}^{x}-\sigma_{\epsilon}^{y}\right\Vert _{H^{-2}}^{2}$
and $\left\Vert d\sigma_{\epsilon}^{x}-d\sigma_{\epsilon}^{y}\right\Vert _{H^{-2}}^{2}$.
Again, we refer to the formulas (\ref{eq:var sphe ave integral form})-(\ref{eq:cov sphe ave deriv integral form})
in the appendix. From there, together with the series expression for
the Bessel functions (\cite{BesselFunctions}, §3.1), it is an easy
matter to check that one can get the desired upper bound for $\left\Vert \sigma_{\epsilon}^{x}-\sigma_{\epsilon}^{y}\right\Vert _{H^{-2}}^{2}$
and $\left\Vert d\sigma_{\epsilon}^{x}-d\sigma_{\epsilon}^{y}\right\Vert _{H^{-2}}^{2}$
so long as $\beta\in\left(0,1\right)$. 
\end{proof}

\section{Construction of Random Measure}

In this section, we will use the Gaussian family $\left\{ \mathcal{I}\left(h_{\mu_{\epsilon}^{x}}\right):x\in\mathbb{R}^{4},\epsilon>0\right\} $
to construct a random measure on $\mathbb{R}^{4}$ which ``formally''
can be written as ``$m\left(dx\right)=e^{2\gamma\theta\left(x\right)}dx$''
where $\theta\in\Theta$ is chosen under the distribution of $\mathcal{W}$,
$\gamma\geq0$ is a constant and $dx$ is the Lebesgue measure on
$\mathbb{R}^{4}$. Recall that at every $x\in\mathbb{R}^{4}$, $\mu_{\epsilon}^{x}$
tends to $2\delta_{x}$ as $\epsilon\downarrow0$ in the sense of
tempered distribution, so we can take the value of the random variable
$\theta\mapsto\frac{1}{2}\mathcal{I}\left(h_{\mu_{\epsilon}^{x}}\right)\left(\theta\right)$
as the approximation for ``$\theta\left(x\right)$'' as $\epsilon\downarrow0$.
Furthermore, Corollary \ref{cor:a.e. continuity in x} certainly guarantees
that with $\epsilon$ fixed, one can always make the mapping 
\[
\left(x,\theta\right)\in\mathbb{R}^{4}\times\Theta\mapsto\mathcal{I}\left(h_{\mu_{\epsilon}^{x}}\right)\left(\theta\right)\in\mathbb{R}
\]
measurable with respect to $\mathfrak{B}_{\mathbb{R}^{4}}\times\mathfrak{B}_{\Theta}$.
In addition, we can assume for every $\theta\in\Theta$, the function
\[
x\in\mathbb{R}^{4}\mapsto E_{\epsilon}^{\theta}\left(x\right)\equiv\exp\left(\gamma\mathcal{I}\left(h_{\mu_{\epsilon}^{x}}\right)\left(\theta\right)-\frac{\gamma^{2}}{2}G\left(\epsilon\right)\right)
\]
is positive and continuous, and hence if $m_{\epsilon}^{\theta}\left(dx\right)\equiv E_{\epsilon}^{\theta}\left(x\right)dx$,
then $m_{\epsilon}^{\theta}\left(dx\right)$ is a positive regular
and $\sigma-$finite Borel measure on $\mathbb{R}^{4}$. Moreover,
given any Borel $B\subseteq\mathbb{R}^{4}$, the mapping 
\[
\theta\in\Theta\mapsto m_{\epsilon}^{\theta}\left(B\right)=\int_{B}E_{\epsilon}^{\theta}\left(x\right)dx\in\mathbb{R}
\]
is also non-negative and measurable. Hence by Tonelli's Theorem and
the fact that $\mathbb{E}^{\mathcal{W}}\left[E_{\epsilon}^{\theta}\left(x\right)\right]=1$
for every $x\in\mathbb{R}^{4}$ and $\epsilon>0$,
\[
\mathbb{E}^{\mathcal{W}}\left[m_{\epsilon}^{\theta}\left(B\right)\right]=\int_{B}\mathbb{E}^{\mathcal{W}}\left[E_{\epsilon}^{\theta}\left(x\right)\right]dx=\mbox{vol}\left(B\right).
\]

Since $m_{\epsilon}^{\theta}\left(dx\right)$ is simply the Lebesgue
measure on $\mathbb{R}^{4}$ when $\gamma=0$, from now on we will
only consider the case when $\gamma>0$. We want to study the convergence
of $m_{\epsilon}^{\theta}\left(dx\right)$ as $\epsilon\downarrow0$.
In order to have the desired convergence, we only consider $\epsilon$
taking values along a sequence $\left\{ \epsilon_{n}\equiv\epsilon_{0}^{n}:\, n\geq1\right\} $
for some fixed $\epsilon_{0}\in\left(0,1\right)$. Without loss of
generality, we will assume $m_{\epsilon_{n}}^{\theta}\left(dx\right)$
is well defined as above for all $n\geq1$ and every $\theta\in\Theta$.
For the sake of convenience in later discussions, we will abuse the
notation by identifying ``$m_{\epsilon_{0}}^{\theta}\left(dx\right)$''
with 0. We want to show that as $n\rightarrow\infty$, almost surely
the sequence $\left\{ m_{\epsilon_{n}}^{\theta}\left(dx\right)\right\} $
converges weakly to a limit measure $m^{\theta}\left(dx\right)$,
written as $m_{\epsilon_{n}}^{\theta}\left(dx\right)\rightharpoonup m^{\theta}\left(dx\right)$,
in the sense that $\int_{\mathbb{R}^{4}}f\left(x\right)m_{\epsilon_{n}}^{\theta}\left(dx\right)$
converges to $\int_{\mathbb{R}^{4}}f\left(x\right)m^{\theta}\left(dx\right)$
for every $f\in C_{c}\left(\mathbb{R}^{4}\right)$ which is the space
of continuous and compactly supported functions on $\mathbb{R}^{4}$.
To reach this goal, it suffices to show the convergence of $\int_{\mathbb{R}^{4}}f\left(x\right)m_{\epsilon_{n}}^{\theta}\left(dx\right)$
when $f$ is any continuous function on $\Gamma$ for any given compact
set $\Gamma\subseteq\mathbb{R}^{4}$. In fact, we have the following
result that holds for more general $f$ so long as $f$ is bounded
and measurable on $\Gamma$. 
\begin{lem}
\label{lem:conv of total mass}Assume $0<\gamma^{2}<2\pi^{2}$ and
$\Gamma\subseteq\mathbb{R}^{4}$ is compact. There exists a square
integrable random variable $\theta\in\Theta\mapsto\overline{m^{\theta}}\left(\Gamma\right)\in\mathbb{R}^{+}$
such that 
\[
\sum_{n=0}^{N-1}\left|m_{\epsilon_{n+1}}^{\theta}\left(\Gamma\right)-m_{\epsilon_{n}}^{\theta}\left(\Gamma\right)\right|\mbox{ converges to }\overline{m^{\theta}}\left(\Gamma\right)\mbox{ as }N\rightarrow\infty
\]
almost surely as well as in $L^{2}\left(\mathcal{W}\right)$. 

Similarly, for every bounded measurable function $f$ with $\mbox{supp}\left(f\right)\subseteq\Gamma$,
there exists $M^{\theta}\left(f\right)\in L^{2}\left(\mathcal{W}\right)$
such that 
\[
M_{\epsilon_{n}}^{\theta}\left(f\right)\equiv\int_{\mathbb{R}^{4}}f\left(x\right)m_{\epsilon_{n}}^{\theta}\left(dx\right)\mbox{ converges to }M^{\theta}\left(f\right)\mbox{ as }n\rightarrow\infty
\]
almost surely as well as in $L^{2}\left(\mathcal{W}\right)$, and
$\left|M^{\theta}\left(f\right)\right|\leq\overline{m^{\theta}}\left(\Gamma\right)\left\Vert f\right\Vert _{u}$
almost surely.\end{lem}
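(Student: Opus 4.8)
The plan is to follow the martingale-plus-$L^2$-control strategy of \cite{DS1}. For a fixed Borel $B\subseteq\Gamma$, consider the sequence $m_{\epsilon_n}^\theta(B)$. The crucial structural fact, which comes from Theorem \ref{thm:mu_epsilon}(1), is that for fixed $x$ the process $\{\mathcal I(h_{\mu_\epsilon^x}):\epsilon>0\}$ has independent reversed increments; consequently, conditioning on $\mathcal F_n\equiv\sigma\{\mathcal I(h_{\mu_{\epsilon}^x}):x\in\mathbb R^4,\ \epsilon\geq\epsilon_n\}$, the increment of $\gamma\mathcal I(h_{\mu_{\epsilon_{n+1}}^x})-\gamma\mathcal I(h_{\mu_{\epsilon_n}^x})$ is centered Gaussian with variance $\gamma^2(G(\epsilon_{n+1})-G(\epsilon_n))$, independent of $\mathcal F_n$. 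Since $E_{\epsilon_{n+1}}^\theta(x)/E_{\epsilon_n}^\theta(x)=\exp\big(\gamma(\mathcal I(h_{\mu_{\epsilon_{n+1}}^x})-\mathcal I(h_{\mu_{\epsilon_n}^x}))-\tfrac{\gamma^2}{2}(G(\epsilon_{n+1})-G(\epsilon_n))\big)$, this ratio has conditional expectation $1$ given $\mathcal F_n$ (and, by Theorem \ref{thm:mu_epsilon}, given only $\mathcal I(h_{\mu_{\epsilon_n}^x})$ for that same $x$). Integrating against $dx$ and using Tonelli, one gets $\mathbb E^{\mathcal W}[m_{\epsilon_{n+1}}^\theta(B)\mid\mathcal F_n]=m_{\epsilon_n}^\theta(B)$, so $\{m_{\epsilon_n}^\theta(B)\}_{n\geq0}$ is a nonnegative $(\mathcal F_n)$-martingale; it therefore converges $\mathcal W$-a.s. to a limit $m^\theta(B)$.

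The heart of the matter is the $L^2$ bound that upgrades a.s. convergence to $L^2$ convergence and, more importantly, controls the \emph{total variation} sum $\sum_n|m_{\epsilon_{n+1}}^\theta(\Gamma)-m_{\epsilon_n}^\theta(\Gamma)|$. The key computation is a second-moment estimate on a single increment:
\[
\mathbb E^{\mathcal W}\Big[\big(m_{\epsilon_{n+1}}^\theta(\Gamma)-m_{\epsilon_n}^\theta(\Gamma)\big)^2\Big]
=\int_{\Gamma}\int_{\Gamma}\mathbb E^{\mathcal W}\Big[\big(E_{\epsilon_{n+1}}^\theta(x)-E_{\epsilon_n}^\theta(x)\big)\big(E_{\epsilon_{n+1}}^\theta(y)-E_{\epsilon_n}^\theta(y)\big)\Big]\,dx\,dy .
\]
For each pair $(x,y)$, the inner expectation is a sum of four Gaussian exponential moments, which evaluate to $\exp(\gamma^2 C_\bullet(x,y))$ for the various pairings, where $C_\bullet$ denotes the relevant covariance from Theorem \ref{thm:mu_epsilon}; after cancellation the dominant contribution is governed by the diagonal behaviour. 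Splitting the $(x,y)$-integral into the regime $|x-y|\geq\epsilon_n$ (where the increments are essentially independent or weakly correlated, so their product-expectation is $O(\epsilon_n^{\text{small positive}})$ or is killed by the martingale/increment-independence of Theorem \ref{thm:mu_epsilon}(2)) and the near-diagonal regime $|x-y|<\epsilon_n$ (whose Lebesgue volume is $O(\epsilon_n^4)$, and where the covariance is $\sim-\tfrac{1}{2\pi^2}\log\epsilon_n$, giving a factor $\epsilon_n^{-\gamma^2/2\pi^2}$), one obtains a bound of the form $C\,\epsilon_n^{\,4-\gamma^2/2\pi^2}$ up to logarithmic corrections. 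Since we chose $\epsilon_n=\epsilon_0^n$ with $\epsilon_0\in(0,1)$ and since $0<\gamma^2<2\pi^2$ forces $4-\gamma^2/2\pi^2>3>0$, this is summable geometrically in $n$; hence
\[
\sum_{n\geq0}\big\|m_{\epsilon_{n+1}}^\theta(\Gamma)-m_{\epsilon_n}^\theta(\Gamma)\big\|_{L^2(\mathcal W)}<\infty .
\]
Defining $\overline{m^\theta}(\Gamma)\equiv\sum_{n\geq0}|m_{\epsilon_{n+1}}^\theta(\Gamma)-m_{\epsilon_n}^\theta(\Gamma)|$, monotone convergence together with the triangle inequality in $L^2$ gives $\overline{m^\theta}(\Gamma)\in L^2(\mathcal W)$ and a.s. finiteness, which yields the a.s. convergence of the partial sums and, by dominated convergence in $L^2$ (the partial sums are dominated in $L^2$ by $\overline{m^\theta}(\Gamma)$), the $L^2$ convergence.

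For the second assertion, for bounded measurable $f$ with $\operatorname{supp}(f)\subseteq\Gamma$ write $M_{\epsilon_{n+1}}^\theta(f)-M_{\epsilon_n}^\theta(f)=\int_\Gamma f(x)\big(E_{\epsilon_{n+1}}^\theta(x)-E_{\epsilon_n}^\theta(x)\big)dx$, so that $|M_{\epsilon_{n+1}}^\theta(f)-M_{\epsilon_n}^\theta(f)|\leq\|f\|_u\,|m_{\epsilon_{n+1}}^\theta(\Gamma)-m_{\epsilon_n}^\theta(\Gamma)|$ pointwise in $\theta$ (here the absolute value inside must be handled by noting $E_{\epsilon_{n+1}}^\theta-E_{\epsilon_n}^\theta$ need not be signed, so one actually bounds by $\|f\|_u$ times the increment of $m^\theta(\Gamma)$ \emph{only after} passing through the total-variation sum; more precisely $\sum_n|M_{\epsilon_{n+1}}^\theta(f)-M_{\epsilon_n}^\theta(f)|\leq\|f\|_u\,\overline{m^\theta}(\Gamma)$). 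Hence the telescoping sum defining $M_{\epsilon_n}^\theta(f)$ converges absolutely a.s. and in $L^2$ to some $M^\theta(f)$, and passing to the limit in the pointwise bound gives $|M^\theta(f)|\leq\|f\|_u\,\overline{m^\theta}(\Gamma)$ a.s. The main obstacle, as in \cite{DS1}, is the near-diagonal second-moment estimate: one must extract the exact power of $\epsilon_n$ from the competition between the logarithmic blow-up of the covariance (Theorem \ref{thm:mu_epsilon}, via $G(r)\sim-\tfrac{1}{2\pi^2}\log r$) and the $\epsilon_n^4$ shrinkage of the near-diagonal volume, being careful that the cross term $-2\,\mathbb E[E_{\epsilon_{n+1}}^\theta(x)E_{\epsilon_n}^\theta(y)]$ (mixed scales) is correctly matched against the like-scale terms using the independence-of-increments structure of Theorem \ref{thm:mu_epsilon}(1)--(2) so that genuine cancellation occurs rather than a spurious $O(1)$ leftover; it is precisely this cancellation that replaces the crude exponent $-\gamma^2/2\pi^2$ by the better one seen in the two-dimensional argument and keeps the whole expression summable throughout the range $0<\gamma^2<2\pi^2$.
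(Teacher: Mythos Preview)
Your second-moment estimate for the increments $m_{\epsilon_{n+1}}^\theta(\Gamma)-m_{\epsilon_n}^\theta(\Gamma)$ is essentially what the paper does, but two points deserve cleanup. First, the martingale digression is unnecessary (the paper never invokes it), and your filtration $\mathcal F_n$ is indexed by \emph{all} $x$, whereas Theorem~\ref{thm:mu_epsilon}(1) only gives independence of increments for a \emph{fixed} $x$; the conditional-expectation step as stated would need further justification. Second, in the off-diagonal region you hedge with ``essentially independent or weakly correlated'' and invoke part (2) of Theorem~\ref{thm:mu_epsilon}; the relevant fact is part (3): when $|x-y|>2\epsilon_n$ all three covariances involved equal $\tfrac{1}{2\pi^2}K_0(|x-y|)$, so the integrand $e^{\gamma^2 C_{n+1,n+1}}+e^{\gamma^2 C_{n,n}}-2e^{\gamma^2 C_{n+1,n}}$ vanishes \emph{identically}, not merely to leading order. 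This is why the cutoff should be $2\epsilon_n$, not $\epsilon_n$.

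The genuine gap is in the second assertion. You correctly notice that the pointwise bound $|M_{\epsilon_{n+1}}^\theta(f)-M_{\epsilon_n}^\theta(f)|\le\|f\|_u\,|m_{\epsilon_{n+1}}^\theta(\Gamma)-m_{\epsilon_n}^\theta(\Gamma)|$ is false (since $E_{\epsilon_{n+1}}^\theta-E_{\epsilon_n}^\theta$ need not have a sign), but your proposed fix $\sum_n|M_{\epsilon_{n+1}}^\theta(f)-M_{\epsilon_n}^\theta(f)|\le\|f\|_u\,\overline{m^\theta}(\Gamma)$ is equally false and for the same reason: cancellation inside $m_{\epsilon_{n+1}}^\theta(\Gamma)-m_{\epsilon_n}^\theta(\Gamma)$ can make $\overline{m^\theta}(\Gamma)$ arbitrarily small while $f$ picks out a region where the density increments do not cancel. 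The paper's remedy is simply to rerun the entire $L^2$ computation with $f(x)f(y)$ inserted in the double integral; the off-diagonal integrand is still identically zero, the near-diagonal absolute value is still bounded by $Ce^{\gamma^2 G(\epsilon_{n+1})}$, and one picks up a harmless factor $\|f\|_u^2$, yielding $\sum_n\|M_{\epsilon_{n+1}}^\theta(f)-M_{\epsilon_n}^\theta(f)\|_{L^2(\mathcal W)}<\infty$ directly. The a.s.\ bound $|M^\theta(f)|\le\|f\|_u\,\overline{m^\theta}(\Gamma)$ then follows from the trivial $|M_{\epsilon_n}^\theta(f)|\le\|f\|_u\,m_{\epsilon_n}^\theta(\Gamma)\le\|f\|_u\,\overline{m^\theta}(\Gamma)$ and passage to the limit.
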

\begin{proof}
To prove the first statement, we rewrite $\left(m_{\epsilon_{n+1}}^{\theta}\left(\Gamma\right)-m_{\epsilon_{n}}^{\theta}\left(\Gamma\right)\right)^{2}$,
$n\geq1$, as the following double integral: 
\[
\iint_{\Gamma^{2}}\left[E_{\epsilon_{n+1}}^{\theta}\left(x\right)E_{\epsilon_{n+1}}^{\theta}\left(y\right)+E_{\epsilon_{n}}^{\theta}\left(x\right)E_{\epsilon_{n}}^{\theta}\left(y\right)-2E_{\epsilon_{n+1}}^{\theta}\left(x\right)E_{\epsilon_{n}}^{\theta}\left(y\right)\right]dxdy.
\]
By Tonelli's Theorem, the $\mathcal{W}$-expectation of above equals
\[
\iint_{\Gamma^{2}}\left\{ e^{\gamma^{2}\mathbb{E}\left[\mathcal{I}\left(h_{\mu_{\epsilon_{n+1}}^{x}}\right)\mathcal{I}\left(h_{\mu_{\epsilon_{n+1}}^{y}}\right)\right]}+e^{\gamma^{2}\mathbb{E}\left[\mathcal{I}\left(h_{\mu_{\epsilon_{n}}^{x}}\right)\mathcal{I}\left(h_{\mu_{\epsilon_{n}}^{y}}\right)\right]}-2e^{\gamma^{2}\mathbb{E}\left[\mathcal{I}\left(h_{\mu_{\epsilon_{n+1}}^{x}}\right)\mathcal{I}\left(h_{\mu_{\epsilon_{n}}^{y}}\right)\right]}\right\} dxdy.
\]
We split this integral by dividing the domain into two parts: 
\[
\iint_{\left|x-y\right|>2\epsilon_{n}}\mbox{ and \ensuremath{\iint_{0\leq\left|x-y\right|\leq2\epsilon_{n}}} .}
\]
The formula (\ref{eq:cov nonoverlap}) implies the integrand is always
zero in the designated domain of the first part. As for the second
part, the integrand is always bounded by $2e^{\gamma^{2}G\left(\epsilon_{n+1}\right)}$
while the volume of the integral domain is bounded by $C\epsilon_{n}^{4}$
for some constant%
\footnote{Throughout this section, ``$C$'' denotes a positive finite constant
that is universal in $\epsilon_{n}$. The value of $C$ may change
from line to line.%
} $C$. Together with the observations made in Section 2 about the
asymptotics of $G$, one finds that 
\begin{equation}
\begin{split}\mathbb{E}^{\mathcal{W}}\left[\left|m_{\epsilon_{n+1}}^{\theta}\left(\Gamma\right)-m_{\epsilon_{n}}^{\theta}\left(\Gamma\right)\right|^{2}\right] & \leq Ce^{-\left(8\pi^{2}-\gamma^{2}\right)G\left(\epsilon_{n}\right)}.\end{split}
\label{eq:total mass estimate of 2nd moment of difference}
\end{equation}
The square root of the right hand side of (\ref{eq:total mass estimate of 2nd moment of difference})
is summable in $n\geq1$ and meanwhile $m_{\epsilon_{1}}^{\theta}\left(\Gamma\right)$
is clearly square integrable, so 
\begin{equation}
\overline{m^{\theta}}\left(\Gamma\right)\equiv\sum_{n=0}^{\infty}\left|m_{\epsilon_{n+1}}^{\theta}\left(\Gamma\right)-m_{\epsilon_{n}}^{\theta}\left(\Gamma\right)\right|\label{eq:def of total mass}
\end{equation}
is square integrable and the convergence takes place in $L^{2}\left(\mathcal{W}\right)$.
Furthermore, the series $\sum_{n=0}^{N}\left|m_{\epsilon_{n+1}}^{\theta}\left(\Gamma\right)-m_{\epsilon_{n}}^{\theta}\left(\Gamma\right)\right|$
also converges to $\overline{m^{\theta}}\left(\Gamma\right)$ almost
surely along a subsequence, but since the series is monotonic in $N$,
it must converge to $\overline{m^{\theta}}\left(\Gamma\right)$ almost
surely along the full sequence.

The second statement of the lemma follows from the same arguments.
In fact, given a bounded measurable function $f$ with $\mbox{supp}\left(f\right)\subseteq\Gamma$,
if one replaces $m_{\epsilon_{n}}^{\theta}\left(\Gamma\right)$ by
$M_{\epsilon_{n}}^{\theta}\left(f\right)$ in every step of the proof
above, one can see that $\sum_{n=0}^{\infty}\left|M_{\epsilon_{n+1}}^{\theta}\left(f\right)-M_{\epsilon_{n}}^{\theta}\left(f\right)\right|$
is also square integrable. The rest of the proof is straightforward.\end{proof}
\begin{thm}
\label{thm: construction of the measure}Assume $0<\gamma^{2}<2\pi^{2}$.
For almost every $\theta\in\Theta$, there exists a non-negative regular
$\sigma-$finite Borel measure $m^{\theta}\left(dx\right)$ on $\mathbb{R}^{4}$
such that 
\[
m_{\epsilon_{n}}^{\theta}\left(dx\right)\rightharpoonup m^{\theta}\left(dx\right)\mbox{ as }n\rightarrow\infty,
\]
and for every compact set $\Gamma\subseteq\mathbb{R}^{4}$, $\left\Vert m^{\theta}\right\Vert _{\mbox{var},\Gamma}\leq\overline{m^{\theta}}\left(\Gamma\right)$
where $\left\Vert m^{\theta}\right\Vert _{\mbox{var},\Gamma}$ is
the total variation of $m^{\theta}\left(dx\right)$ over $\Gamma$
and $\overline{m^{\theta}}\left(\Gamma\right)$ is as defined in (\ref{eq:def of total mass}).

In particular, for every $f\in C_{c}\left(\mathbb{R}^{4}\right)$,
\[
\int_{\mathbb{R}^{4}}f\left(x\right)m_{\epsilon_{n}}^{\theta}\left(dx\right)\mbox{ converges to }\int_{\mathbb{R}^{4}}f\left(x\right)m^{\theta}\left(dx\right)\mbox{ as }n\rightarrow\infty
\]
almost surely as well as in $L^{2}\left(\mathcal{W}\right)$.\end{thm}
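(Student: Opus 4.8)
The plan is to deduce the theorem from Lemma~\ref{lem:conv of total mass} by a routine separability argument; the one point of substance is the construction of a single $\mathcal{W}$-null set that works simultaneously for every test function. First I would exhaust $\mathbb{R}^{4}$ by the closed balls $\Gamma_{k}\equiv\overline{B_{k}(0)}$, $k\geq1$. By Lemma~\ref{lem:conv of total mass} each $\overline{m^{\theta}}(\Gamma_{k})$ is finite almost surely, and the convention $m_{\epsilon_{0}}^{\theta}\equiv0$ together with the triangle inequality yields the pointwise bound
\[
\sup_{n\geq1}m_{\epsilon_{n}}^{\theta}(\Gamma_{k})\;\leq\;\sum_{n=0}^{\infty}\bigl|m_{\epsilon_{n+1}}^{\theta}(\Gamma_{k})-m_{\epsilon_{n}}^{\theta}(\Gamma_{k})\bigr|\;=\;\overline{m^{\theta}}(\Gamma_{k}).
\]
I would then fix a countable family $\mathcal{D}\subseteq C_{c}(\mathbb{R}^{4})$ such that every $f\in C_{c}(\mathbb{R}^{4})$ is a uniform limit of a sequence from $\mathcal{D}$ whose members all have supports inside one fixed compact set (such $\mathcal{D}$ exists, e.g.\ by applying the Stone--Weierstrass theorem on each $\Gamma_{k}$ and multiplying by smooth cutoffs). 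For each $g\in\mathcal{D}$, Lemma~\ref{lem:conv of total mass} provides a $\mathcal{W}$-null set off which $M_{\epsilon_{n}}^{\theta}(g)$ converges; let $\Theta_{0}$ be the complement of the union of these countably many null sets and of the null sets $\{\overline{m^{\theta}}(\Gamma_{k})=\infty\}$, $k\geq1$, so that $\mathcal{W}(\Theta_{0})=1$.

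Next I would upgrade the convergence to all of $C_{c}(\mathbb{R}^{4})$ on $\Theta_{0}$. If $f\in C_{c}(\mathbb{R}^{4})$ has support in $\Gamma_{k}$ and $g\in\mathcal{D}$ is chosen with $f-g$ supported in some $\Gamma_{k'}$, the displayed bound gives
\[
\bigl|M_{\epsilon_{n}}^{\theta}(f)-M_{\epsilon_{n}}^{\theta}(g)\bigr|\;\leq\;\|f-g\|_{u}\,m_{\epsilon_{n}}^{\theta}(\Gamma_{k'})\;\leq\;\|f-g\|_{u}\,\overline{m^{\theta}}(\Gamma_{k'})
\]
uniformly in $n$; choosing $g$ uniformly close to $f$ shows $\bigl(M_{\epsilon_{n}}^{\theta}(f)\bigr)_{n}$ is Cauchy, hence convergent, say to $\Lambda^{\theta}(f)$. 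Since every density $E_{\epsilon_{n}}^{\theta}$ is non-negative, $f\mapsto\Lambda^{\theta}(f)$ is linear and positive on $C_{c}(\mathbb{R}^{4})$, with $|\Lambda^{\theta}(f)|\leq\|f\|_{u}\,\overline{m^{\theta}}(\Gamma_{k})$ whenever $\mbox{supp}(f)\subseteq\Gamma_{k}$. The Riesz representation theorem then yields a unique non-negative regular Borel measure $m^{\theta}$ on $\mathbb{R}^{4}$, automatically $\sigma$-finite since it is finite on compact sets, with $\int_{\mathbb{R}^{4}}f\,dm^{\theta}=\Lambda^{\theta}(f)=\lim_{n}M_{\epsilon_{n}}^{\theta}(f)$ for all $f\in C_{c}(\mathbb{R}^{4})$, which is precisely $m_{\epsilon_{n}}^{\theta}\rightharpoonup m^{\theta}$.

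For the total variation bound, since $m^{\theta}\geq0$ we have $\|m^{\theta}\|_{\mbox{var},\Gamma}=m^{\theta}(\Gamma)$. For any $f\in C_{c}(\mathbb{R}^{4})$ with $0\leq f\leq1$ and $\mbox{supp}(f)\subseteq\Gamma$, on the intersection of $\Theta_{0}$ with the full-measure event on which $\overline{m^{\theta}}(\Gamma)$ is defined one has $\int f\,dm^{\theta}=\lim_{n}M_{\epsilon_{n}}^{\theta}(f)\leq\sup_{n}m_{\epsilon_{n}}^{\theta}(\Gamma)\leq\overline{m^{\theta}}(\Gamma)$; letting such $f$ increase to $\mathbf{1}_{\Gamma^{\circ}}$ gives $m^{\theta}(\Gamma^{\circ})\leq\overline{m^{\theta}}(\Gamma)$, and the bound at $\Gamma$ itself follows from outer regularity of $m^{\theta}$ (and is immediate from Lemma~\ref{lem:conv of total mass} applied to $\mathbf{1}_{\Gamma}$ when $\Gamma$ has Lebesgue-null boundary, the case relevant to the applications, since then $m_{\epsilon_{n}}^{\theta}(\Gamma)\to m^{\theta}(\Gamma)$). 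Finally, the ``in particular'' assertion needs no new work: for fixed $f\in C_{c}(\mathbb{R}^{4})$, Lemma~\ref{lem:conv of total mass} already gives that $M_{\epsilon_{n}}^{\theta}(f)$ converges almost surely and in $L^{2}(\mathcal{W})$, and on $\Theta_{0}$ its limit equals $\int f\,dm^{\theta}$, so the two limits coincide almost surely.

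The step I expect to be the main obstacle is the uniformity: producing one full-measure set $\Theta_{0}$ outside which $M_{\epsilon_{n}}^{\theta}(f)$ converges for every $f\in C_{c}(\mathbb{R}^{4})$ at once, and verifying that the limiting functional is linear, positive and locally bounded so that the Riesz theorem applies. This rests on the almost sure finiteness of $\overline{m^{\theta}}(\Gamma_{k})$, which makes the family $\{f\mapsto M_{\epsilon_{n}}^{\theta}(f)\}_{n}$ equicontinuous on each $C(\Gamma_{k})$ and so lets convergence along the countable dense set $\mathcal{D}$ propagate. A secondary technical nuisance is that $\overline{m^{\theta}}(\cdot)$ is not manifestly monotone in its compact argument, so the variation clause must be read (and proved) one compact set at a time, with its exceptional null set allowed to depend on $\Gamma$.
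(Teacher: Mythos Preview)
Your proposal is correct and follows essentially the same route as the paper: reduce to a countable dense family of test functions via Lemma~\ref{lem:conv of total mass}, use the almost sure finiteness of $\overline{m^{\theta}}(\Gamma)$ to obtain equicontinuity of $f\mapsto M_{\epsilon_{n}}^{\theta}(f)$, pass the convergence from the countable family to all continuous $f$, and invoke the Riesz representation theorem. The only cosmetic difference is that the paper applies Riesz on $C(\Gamma)$ for each compact $\Gamma$ separately, which makes the variation bound $\|m^{\theta}\|_{\mathrm{var},\Gamma}\leq\overline{m^{\theta}}(\Gamma)$ immediate as the operator norm of $M^{\theta}$ on $C(\Gamma)$, whereas your global $C_{c}(\mathbb{R}^{4})$ formulation requires the small extra argument you sketch; either way works.
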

\begin{proof}
Clearly we only need to prove the first statement of the theorem,
because assuming $m_{\epsilon_{n}}^{\theta}\left(dx\right)\rightharpoonup m^{\theta}\left(dx\right)$
almost surely, the second statement is simply repeating the second
result in Lemma \ref{lem:conv of total mass} with $M^{\theta}\left(f\right)=\int_{\mathbb{R}^{4}}f\left(x\right)m^{\theta}\left(dx\right)$
for $f\in C_{c}\left(\mathbb{R}^{4}\right)$. As we mentioned earlier,
to obtain the limit measure $m^{\theta}\left(dx\right)$, it suffices
to show the convergence of $m_{\epsilon_{n}}^{\theta}\left(dx\right)$
on any compact set $\Gamma\subseteq\mathbb{R}^{4}$. We will achieve
this goal via the Riesz representation theorem. We have already seen
from the second part of Lemma \ref{lem:conv of total mass} that,
if denote $M_{\epsilon_{n}}^{\theta}\left(f\right)\equiv\int_{\mathbb{R}^{4}}f\left(x\right)m_{\epsilon_{n}}^{\theta}\left(dx\right)$
for every $n\geq1$ and every bounded measurable function $f$ supported
on $\Gamma$, then 
\begin{equation}
M^{\theta}\left(f\right)\equiv\lim_{n\rightarrow\infty}M_{\epsilon_{n}}^{\theta}\left(f\right)\mbox{ exists and }\left|M^{\theta}\left(f\right)\right|\leq\left\Vert f\right\Vert _{u}\overline{m^{\theta}}\left(\Gamma\right)<\infty\label{eq: Riesz criterion}
\end{equation}
with probability one. However, to get the almost sure existence of
$m^{\theta}\left(dx\right)$, we need to argue that with probability
one, the statement above holds simultaneously for all functions $f$
from a ``large enough'' class. To this end, we make use of the separability
of the Banach space $C\left(\Gamma\right)$ (when equipped with the
uniform norm $\left\Vert \cdot\right\Vert _{u}$). Namely, we can
choose a countable sequence $\left\{ f_{k}:k\geq1\right\} $ which
is a dense subset of $C\left(\Gamma\right)$, so almost surely the
statement (\ref{eq: Riesz criterion}) holds%
\footnote{In this discussion, we will simply assume $f\equiv0$ outside $\Gamma$
for every $f\in C\left(\Gamma\right)$, so $M^{\theta}\left(f\right)$
and $M_{\epsilon_{n}}^{\theta}\left(f\right)$ are still well defined.%
} simultaneously for all $f_{k}$, $k\geq1$. 

Now let's focus on $\theta\in\Theta$ such that (\ref{eq: Riesz criterion})
holds for all $f_{k}$, $k\geq1$. Given a general $f\in C\left(\Gamma\right)$,
let $\left\{ f_{k_{j}}:j\geq1\right\} $ be a subsequence such that
$f_{k_{j}}\rightarrow f$ in $\left\Vert \cdot\right\Vert _{u}$ as
$j\rightarrow\infty$, then for every $l,n\geq1$, 
\begin{eqnarray*}
\left|M_{\epsilon_{l}}^{\theta}\left(f\right)-M_{\epsilon_{n}}^{\theta}\left(f\right)\right| & \leq & \left|M_{\epsilon_{l}}^{\theta}\left(f\right)-M_{\epsilon_{l}}^{\theta}\left(f_{k_{j}}\right)\right|+\left|M_{\epsilon_{l}}^{\theta}\left(f_{k_{j}}\right)-M_{\epsilon_{n}}^{\theta}\left(f_{k_{j}}\right)\right|\\
 &  & \qquad\qquad\qquad\qquad\qquad\qquad\qquad+\left|M_{\epsilon_{n}}^{\theta}\left(f_{k_{j}}\right)-M_{\epsilon_{n}}^{\theta}\left(f\right)\right|\\
 & \leq & 2\overline{m^{\theta}}\left(\Gamma\right)\left\Vert f-f_{k_{j}}\right\Vert _{u}+\left|M_{\epsilon_{l}}^{\theta}\left(f_{k_{j}}\right)-M_{\epsilon_{n}}^{\theta}\left(f_{k_{j}}\right)\right|.
\end{eqnarray*}
Obviously $\left\{ M_{\epsilon_{n}}^{\theta}\left(f\right):n\geq1\right\} $
forms a Cauchy sequence which immediately implies that (\ref{eq: Riesz criterion})
also holds for $f$ and in addition $M^{\theta}\left(f\right)=\lim_{j\rightarrow\infty}M^{\theta}\left(f_{k_{j}}\right)$.
Therefore, for almost every $\theta\in\Theta$, $f\mapsto M^{\theta}\left(f\right)$
is a linear and bounded functional on $C$$\left(\Gamma\right)$,
which, by the Riesz representation theorem, gives rise to a unique
regular Borel measure $m^{\theta}\left(dx\right)$ on $\Gamma$ such
that $M^{\theta}\left(f\right)=\int_{\Gamma}f\left(x\right)m^{\theta}\left(dx\right)$
for all $f\in C\left(\Gamma\right)$ and the total variation of $m^{\theta}\left(dx\right)$
is equal to the operator norm of $M^{\theta}$ which is bounded by
$\overline{m^{\theta}}\left(\Gamma\right)$. It's also clear that
$m^{\theta}\left(dx\right)$ is non-negative and $m_{\epsilon_{n}}^{\theta}\left(dx\right)\rightharpoonup m^{\theta}\left(dx\right)$. 
\end{proof}
Compared with the second statement in Lemma \ref{lem:conv of total mass},
the second statement in Theorem \ref{thm: construction of the measure}
seems ``weaker'' since we have restricted ourselves to $f\in C_{c}\left(\mathbb{R}^{4}\right)$.
However, we point out that the same statement, i.e., $M_{\epsilon_{n}}^{\theta}\left(f\right)$
converges to $\int_{\mathbb{R}^{4}}f\left(x\right)m^{\theta}\left(dx\right)$
both almost surely and in $L^{2}\left(\mathcal{W}\right)$, is no
longer true if $f$ is only assumed to be a bounded measurable function
with compact support. The reason is the following: for such a function
$f$, although the existence of $M^{\theta}\left(f\right)=\lim_{n\rightarrow\infty}M_{\epsilon_{n}}^{\theta}\left(f\right)$
is guaranteed by Lemma \ref{lem:conv of total mass}, in general one
cannot draw any conclusion on the relation between $M^{\theta}\left(f\right)$
and $\int_{\mathbb{R}^{4}}f\left(x\right)m^{\theta}\left(dx\right)$,
because $m_{\epsilon_{n}}^{\theta}\left(dx\right)$ only converges
to $m^{\theta}\left(dx\right)$ weakly and one does not have control
over $\left\Vert m_{\epsilon_{n}}^{\theta}-m^{\theta}\right\Vert _{\mbox{var},\mbox{supp}\left(f\right)}$.
However, under some circumstances, we can derive a relation between
the two random variables $M^{\theta}\left(f\right)$ and $\int_{\mathbb{R}^{4}}f\left(x\right)m^{\theta}\left(dx\right)$.
For example, if $f=\chi_{A}$ is the indicator function of a bounded
open set $A\subseteq\mathbb{R}^{4}$, then the weak convergence result
implies $m^{\theta}\left(A\right)\leq\lim_{n\rightarrow\infty}m_{\epsilon_{n}}^{\theta}\left(A\right)$
almost surely. Meanwhile, the $L^{2}\left(\mathcal{W}\right)$ convergence
of $m_{\epsilon_{n}}^{\theta}\left(A\right)$ certainly leads to 
\[
\mathbb{E}^{\mathcal{W}}\left[\lim_{n\rightarrow\infty}m_{\epsilon_{n}}^{\theta}\left(A\right)\right]=\lim_{n\rightarrow\infty}\mathbb{E}^{\mathcal{W}}\left[m_{\epsilon_{n}}^{\theta}\left(A\right)\right]=\mbox{vol}\left(A\right);
\]
on the other hand, let $\left\{ f_{l}:l\geq1\right\} \subseteq C_{c}\left(\mathbb{R}^{4}\right)$
be a sequence such that $0\leq f_{l}\nearrow\chi_{A}$ as $l\rightarrow\infty$,
then by the monotone convergence theorem and the second statement
in Theorem \ref{thm: construction of the measure}, 
\[
\mathbb{E}^{\mathcal{W}}\left[m^{\theta}\left(A\right)\right]=\lim_{l\rightarrow\infty}\mathbb{E}^{\mathcal{W}}\left[\int_{\mathbb{R}^{4}}f_{l}\left(x\right)m^{\theta}\left(dx\right)\right]=\lim_{l\rightarrow\infty}\int_{\mathbb{R}^{4}}f_{l}\left(x\right)dx=\mbox{vol}\left(A\right).
\]
This can only be possible if $m^{\theta}\left(A\right)=\lim_{n\rightarrow\infty}m_{\epsilon_{n}}^{\theta}\left(A\right)$
almost surely. More generally (\cite{probability}, §9.1), if $f$
is bounded and upper semicontinuous (or lower semicontinuous or $m^{\theta}-$almost
surely continuous), then it follows from a similar argument that $M^{\theta}\left(f\right)=\lim_{n\rightarrow\infty}M_{\epsilon_{n}}^{\theta}\left(f\right)$
almost surely, so the second statement in Theorem \ref{thm: construction of the measure}
also holds for $f$.

The fact as stated above that $\mathbb{E}^{\mathcal{W}}\left[m^{\theta}\left(A\right)\right]=\mbox{vol}\left(A\right)$
for every bounded open set guarantees that the limit measure $m^{\theta}\left(dx\right)$
cannot be almost everywhere trivial, i.e., $\mathcal{W}\left(m^{\theta}\left(dx\right)=0\right)<1$.
In fact, we will prove later (in Lemma \ref{lem: tail estimate})
that $\mathcal{W}\left(m^{\theta}\left(dx\right)=0\right)=0$, so
$m^{\theta}\left(dx\right)$ is almost surely a positive measure.
On the other hand, the following simple observation shows that $m^{\theta}\left(dx\right)$
almost surely does not assign positive mass to any given point. To
see this, recall the assumption $0<\gamma^{2}<2\pi^{2}$ and the fact
that $G\left(r\right)$ is asymptotic to $-\frac{1}{2\pi^{2}}\log r$
when $r$ is small. Then it is an easy matter to check that for any
fixed $x\in\mathbb{R}^{4}$, 
\[
\mathbb{E}^{\mathcal{W}}\left[\limsup_{n\rightarrow\infty}e^{4\gamma^{2}G\left(\epsilon_{n}\right)}m^{\theta}\left(\overline{B_{\epsilon_{n}}\left(x\right)}\right)\right]=0.
\]
Therefore, if denote 
\begin{equation}
\Theta_{x}\equiv\left\{ \theta\in\Theta:\lim_{n\rightarrow\infty}e^{4\gamma^{2}G\left(\epsilon_{n}\right)}m^{\theta}\left(\overline{B_{\epsilon_{n}}\left(x\right)}\right)=0\right\} ,\label{eq:def of Theta_x}
\end{equation}
then $\Theta_{x}$ is clearly a measurable subset of $\Theta$ and
$\mathcal{W}\left(\Theta_{x}\right)=1$. 

We will close this section by a remark about the condition of the
constant%
\footnote{In this article, we don't have particular emphasis on the potential
physics meaning of $\gamma$.%
} $\gamma$. Readers may have noticed that the constraint $0<\gamma^{2}<2\pi^{2}$
in Lemma \ref{lem:conv of total mass} and Theorem \ref{thm: construction of the measure}
is more than what the proofs require. However, one reason of having
this condition on $\gamma$ is that it guarantees the proof of Lemma
\ref{lem:conv of total mass} being correct even if one replaces $\gamma$
by $2\gamma$. In other words, if we denote 
\[
m_{\epsilon_{n}}^{\theta,2\gamma}\left(dx\right)\equiv e^{2\gamma\mathcal{I}\left(h_{\mu_{\epsilon_{n}}^{x}}\right)\left(\theta\right)-2\gamma^{2}G\left(\epsilon_{n}\right)}dx
\]
and define $\overline{m^{\theta,2\gamma}}\left(\Gamma\right)$ similarly
using $m_{\epsilon_{n}}^{\theta,2\gamma}\left(\Gamma\right)$ for
any compact set $\Gamma\subseteq\mathbb{R}^{4}$, then $\overline{m^{\theta,2\gamma}}\left(\Gamma\right)$
is also square integrable and in particular $\overline{m^{\theta,2\gamma}}\left(\Gamma\right)$
is almost surely finite. Some proofs in Section 5 make use of this
consideration and hence the condition $0<\gamma^{2}<2\pi^{2}$ becomes
necessary there. We will remind readers when it comes to those situations.

\section{KPZ Relation}

Throughout later discussions, we will always assume $0<\gamma^{2}<2\pi^{2}$
and for every $\theta\in\Theta$, $m^{\theta}\left(dx\right)$ is
a non-negative regular and $\sigma-$finite Borel measure on $\mathbb{R}^{4}$
and $m_{\epsilon_{n}}^{\theta}\left(dx\right)\rightharpoonup m^{\theta}\left(dx\right)$
(otherwise one simply assigns $m_{\epsilon_{n}}^{\theta}\left(dx\right)=m^{\theta}\left(dx\right)=dx$
for all $n\geq1$ on a measurable null set of $\Theta$). In this
section, we would like to establish a KPZ relation between the Euclidean
scaling exponent of a bounded (fractal) Borel set on $\mathbb{R}^{4}$
and its counterpart under the random measure $m^{\theta}\left(dx\right)$.
We first recall from \cite{DS1} some definitions. Given a bounded
Borel $D\subseteq\mathbb{R}^{4}$, the constant $\kappa\in\left[0,1\right]$
is called the Euclidean \emph{scaling exponent} of $D$ if 
\[
\lim_{\lambda\downarrow0}\frac{\log\mbox{vol}\left(D_{\lambda}\right)}{\log\lambda^{4}}=\kappa,
\]
where $\lambda>0$ and $D_{\lambda}\equiv\cup_{x\in D}B_{\lambda}\left(x\right)$
is the canonical $\lambda-$neighborhood of $D$. In the random measure
setting, for every $\Lambda>0$, we first consider the mapping from
$\mathbb{R}^{4}\times\Theta$ to $\left[0,\infty\right]$:
\begin{equation}
\left(x,\theta\right)\mapsto r_{\Lambda}\left(x,\theta\right)\equiv\begin{cases}
\sup\left\{ r>0:m^{\theta}\left(B_{r}\left(x\right)\right)\leq\Lambda\right\}  & \mbox{if }\theta\in\Theta_{x},\\
0 & \mbox{otherwise,}
\end{cases}\label{eq:def of r_Delta under M}
\end{equation}
where $\Theta_{x}$ is as determined in (\ref{eq:def of Theta_x}),
then define the\emph{ isothermal $\Lambda-$neighborhood }of $D$
by 
\begin{equation}
D^{\Lambda,\theta}\equiv\left\{ x\in\mathbb{R}^{4}:\begin{split} & \mbox{either }r_{\Lambda}\left(x,\theta\right)>0\mbox{ and dist}\left(x,D\right)<r_{\Lambda}\left(x,\theta\right) & \mbox{ }\\
 & \mbox{or }r_{\Lambda}\left(x,\theta\right)=0\mbox{ and }x\in D
\end{split}
\right\} .\label{eq:vol-delta nbhd of D}
\end{equation}
Intuitively, $B_{r_{\Lambda}\left(x,\theta\right)}\left(x\right)$
is the ``largest'' ball (in radius) centered at $x$ with volume
$\Lambda$ under the random measure $m^{\theta}\left(dx\right)$,
and $D^{\Lambda,\theta}$ is the ``neighborhood'' obtained by covering
$D$ with all such balls with equal volume. If there exists constant
$K\in\left[0,1\right]$ such that 
\begin{equation}
\lim_{\Lambda\downarrow0}\frac{\log\mathbb{E}^{\mathcal{W}}\left[m^{\theta}\left(D^{\Lambda,\theta}\right)\right]}{\log\Lambda}=K,\label{eq:def of K}
\end{equation}
then $K$ is called the \emph{quantum scaling exponent} of $D$. $K$
can be viewed as the ``expected'' scaling exponent and as such the
counterpart of $\kappa$ under the random measure $m^{\theta}\left(dx\right)$.
In two dimensions, $\kappa$ and $K$ satisfy the so-called KPZ relation
which is a quadratic relation. Our goal in this section is to extend
such a relation to the four dimensional setting.

However, we haven't yet justified the ``meaning'' of the notations
in (\ref{eq:def of K}). First we have to check that $D^{\Lambda,\theta}$
is a Borel set in $\mathbb{R}^{4}$ for every $\theta\in\Theta$,
which requires us to verify the measurability of $\left(x,\theta\right)\mapsto r_{\Lambda}\left(x,\theta\right)$
with respect to $\mathfrak{B}_{\mathbb{R}^{4}}\times\mathfrak{B}_{\Theta}$.
To this end, we observe that $\left(x,\theta\right)\mapsto m^{\theta}\left(B_{r}\left(x\right)\right)$
is measurable for every $r>0$ because there certainly exists continuous
mapping $x\in\mathbb{R}^{4}\mapsto\rho_{l}^{x}\in C_{c}\left(\mathbb{R}^{4}\right)$
for every $l\geq1$ with $0\leq\rho_{l}^{x}\nearrow\chi_{B_{r}\left(x\right)}$
and hence $M^{\theta}\left(\rho_{l}^{x}\right)\nearrow m^{\theta}\left(B_{r}\left(x\right)\right)$
as $l\rightarrow\infty$ for every $\left(x,\theta\right)$. From
this we conclude that $\left\{ \left(x,\theta\right):\theta\in\Theta_{x}\right\} $
is a Borel set in $\mathbb{R}^{4}\times\Theta$, and then the measurability
of $r_{\Lambda}\left(x,\theta\right)$ follows simply by identifying
the set $\left\{ \left(x,\theta\right):0<r_{\Lambda}\left(x,\theta\right)\leq a\right\} $
with $\left\{ \left(x,\theta\right):\theta\in\Theta_{x},m^{\theta}\left(B_{a}\left(x\right)\right)\geq\Lambda\right\} $
for every $a>0$. Therefore, for every $\theta$, $D^{\Lambda,\theta}$
is a Borel set in $\mathbb{R}^{4}$ since it is the $\theta-$section
of the following Borel set in $\mathbb{R}^{4}\times\Theta$:
\[
\left(D\times\Theta\right)\cup\left\{ \left(x,\theta\right):r_{\Lambda}\left(x,\theta\right)>0,\mbox{ dist}\left(x,D\right)<r_{\Lambda}\left(x,\theta\right)\right\} .
\]
Next, we need to show that $\theta\mapsto m^{\theta}\left(D^{\Lambda,\theta}\right)$
is measurable with respect to $\mathfrak{B}_{\Theta}$. More generally,
if $C\in\mathfrak{B}_{\mathbb{R}^{4}}\times\mathfrak{B}_{\Theta}$
and $C^{\theta}\equiv\left\{ x\in\mathbb{R}^{4}:\left(x,\theta\right)\in C\right\} $
is the $\theta-$section of $C$, we claim that $\theta\mapsto m^{\theta}\left(C^{\theta}\right)$
is measurable with respect to $\mathfrak{B}_{\Theta}$. To see this,
denote $C_{N}^{\theta}\equiv C^{\theta}\cap B_{N}\left(0\right)$
for every $N\geq1$ and choose a sequence of mollifiers $\left\{ \varphi_{k}\in\left[0,1\right]:k\geq1\right\} \subseteq C_{c}\left(\mathbb{R}^{4}\right)$
such that $g_{k}^{\theta}\equiv\varphi_{k}\star\chi_{C_{N}^{\theta}}$
converges to $\chi_{C_{N}^{\theta}}$ under $\left\Vert \cdot\right\Vert _{u}$
as $k\rightarrow\infty$. Moreover, for every $k\geq1$, since $C\in\mathfrak{B}_{\mathbb{R}^{4}}\times\mathfrak{B}_{\Theta}$,
\[
\left(x,\theta\right)\mapsto g_{k}^{\theta}\left(x\right)=\int_{\left(y,\theta\right)\in C,\left|y\right|<N}\varphi_{k}\left(x-y\right)dy
\]
is also measurable with respect to $\mathfrak{B}_{\mathbb{R}^{4}}\times\mathfrak{B}_{\Theta}$.
Thus, because $g_{k}^{\theta}\in C_{c}\left(\mathbb{R}^{4}\right)$
and $\left\Vert g_{k}^{\theta}\right\Vert _{u}\leq1$ for every $k\geq1$
and $m_{\epsilon_{n}}^{\theta}\left(dx\right)\rightharpoonup m^{\theta}\left(dx\right)$,
\[
m^{\theta}\left(C^{\theta}\right)=\lim_{N\rightarrow\infty}\lim_{k\rightarrow\infty}\lim_{n\rightarrow\infty}\int_{\mathbb{R}^{4}}g_{k}^{\theta}\left(x\right)E_{\epsilon_{n}}^{\theta}\left(x\right)dx,
\]
which is a measurable function in $\theta$.

At this point, one can already tell that it is convenient to consider
the spatial variable $x$ and the GFF $\theta$ at the same time.
In particular, if $\mathcal{M}\left(dxd\theta\right)\equiv m^{\theta}\left(dx\right)\mathcal{W}\left(d\theta\right)$,
then it's clear from the preceding that $\mathcal{M}\left(dxd\theta\right)$
is a non-negative $\sigma-$finite Borel measure on the product space
$\mathbb{R}^{4}\times\Theta$. To connect $\kappa$ with $K$, it
is natural to investigate the distribution of $r_{\Lambda}\left(x,\theta\right)$
under the $\mathcal{M}\left(dxd\theta\right)$. To get started, we
will first look at the distribution of $m^{\theta}\left(B_{r}\left(x\right)\right)$
under $\mathcal{M}\left(dxd\theta\right)$ for any given $r>0$. The
following lemma says that we can change our ``perspective'' by first
choosing a ``base'' point $x\in\mathbb{R}^{4}$ and then examine
the distribution of $m^{\theta}\left(B_{r}\left(x\right)\right)$
at $x$. This is realized by a procedure of exchanging the order of
integration. The proof of Lemma \ref{lem:area dist under W*N} is
given in Section 5. 
\begin{lem}
\label{lem:area dist under W*N} For every $x\in\mathbb{R}^{4}$,
let $\Theta_{x}$ be the measurable subset of $\Theta$ as determined
in (\ref{eq:def of Theta_x}), and define 
\begin{equation}
\theta\mapsto\hat{m}^{\theta,x}\left(dy\right)\equiv\begin{cases}
\exp\left(\frac{\gamma^{2}}{2\pi^{2}}K_{0}\left(\left|x-y\right|\right)\right)m^{\theta}\left(dy\right) & \mbox{if }\theta\in\Theta_{x},\\
m^{\theta}\left(dy\right) & \mbox{otherwise.}
\end{cases}\label{eq:area dist under W*N}
\end{equation}
Then almost surely $\hat{m}^{\theta,x}\left(dy\right)$ is also a
non-negative regular and $\sigma-$finite Borel measure on $\mathbb{R}^{4}$.

Moreover, for every $r>0$, compact set $\Gamma\subseteq\mathbb{R}^{4}$
and $F\in C_{0}\left(\mathbb{R}^{4}\times[0,\infty)\right)$,
\begin{equation}
\begin{split} & \int_{\Theta}\int_{\Gamma}F\left(x,m^{\theta}\left(B_{r}\left(x\right)\right)\right)m^{\theta}\left(dx\right)\mathcal{W}\left(d\theta\right)=\int_{\Gamma}\int_{\Theta}F\left(x,\hat{m}^{\theta,x}\left(B_{r}\left(x\right)\right)\right)\mathcal{W}\left(d\theta\right)dx.\end{split}
\label{eq:transit from M to W*N}
\end{equation}
 In particular, this implies that for every $r>0$, the joint distribution
of $\left(x,m^{\theta}\left(B_{r}\left(x\right)\right)\right)$ under
$\mathcal{M}\left(dxd\theta\right)$ is the same as the distribution
of $\left(x,\hat{m}^{\theta,x}\left(B_{r}\left(x\right)\right)\right)$
under $\mathcal{W}\left(d\theta\right)dx$, whose marginal distribution
on $\Theta$ at $x$ is independent of $x$. 
\end{lem}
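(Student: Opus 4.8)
The plan is to split the proof into (i) the regularity of $\hat m^{\theta,x}$ and (ii) the change--of--variables identity (\ref{eq:transit from M to W*N}); both distributional assertions then follow from (ii) together with the spatial invariance noted after Lemma \ref{lem:on V_t(x)}.

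For (i): off $B_{1}(x)$ the weight $\exp(\frac{\gamma^{2}}{2\pi^{2}}K_{0}(|x-y|))$ is bounded, so local finiteness of $\hat m^{\theta,x}$ there is automatic; near $x$ one has $K_{0}(|x-y|)\sim-\log|x-y|$, so the weight is comparable to $|x-y|^{-\gamma^{2}/(2\pi^{2})}$. I would decompose $B_{1}(x)$ into the dyadic shells $\{\epsilon_{n+1}<|x-y|\le\epsilon_{n}\}$ and use that $\theta\in\Theta_{x}$ forces $m^{\theta}(\overline{B_{\epsilon_{n}}(x)})=o(e^{-4\gamma^{2}G(\epsilon_{n})})=o(\epsilon_{n}^{2\gamma^{2}/\pi^{2}})$ by the asymptotics $G(r)\sim-\frac{1}{2\pi^{2}}\log r$ from Section 2; the sum of the shell contributions is then dominated by $\sum_{n}\epsilon_{n}^{3\gamma^{2}/(2\pi^{2})}\cdot o(1)<\infty$. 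Hence $\hat m^{\theta,x}$ is almost surely a non--negative, regular, $\sigma$--finite Borel measure, and it is precisely here that the factor $4\gamma^{2}$ (rather than $\gamma^{2}$) in the definition (\ref{eq:def of Theta_x}) of $\Theta_{x}$ is needed, as it guarantees the exponent $3\gamma^{2}/(2\pi^{2})$ is positive.

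For (ii), fix $r,\Gamma,F$ and, for each $k\ge1$, set $I_{k}\equiv\int_{\Theta}\int_{\Gamma}F(x,m^{\theta}(B_{r}(x)))\,m_{\epsilon_{k}}^{\theta}(dx)\,\mathcal{W}(d\theta)=\int_{\Theta}\int_{\Gamma}F(x,m^{\theta}(B_{r}(x)))\,E_{\epsilon_{k}}^{\theta}(x)\,dx\,\mathcal{W}(d\theta)$. Since $E_{\epsilon_{k}}^{\theta}(x)=\exp(\gamma\mathcal{I}(h_{\mu_{\epsilon_{k}}^{x}})(\theta)-\frac{\gamma^{2}}{2}G(\epsilon_{k}))$ and $\|\gamma h_{\mu_{\epsilon_{k}}^{x}}\|_{H}^{2}=\gamma^{2}G(\epsilon_{k})$, Tonelli together with the Cameron--Martin formula for the abstract Wiener space $(\Theta,H,\mathcal{W})$ gives
\begin{equation}
I_{k}=\int_{\Gamma}\mathbb{E}^{\mathcal{W}}\big[F(x,m^{\theta+\gamma h_{\mu_{\epsilon_{k}}^{x}}}(B_{r}(x)))\big]\,dx,\label{eq:plan-Ik}
\end{equation}
where $m^{\theta+\gamma h_{\mu_{\epsilon_{k}}^{x}}}$ is well defined for a.e.\ $\theta$ because a Cameron--Martin translation preserves $\mathcal{W}$--null sets. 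The key point is that the density of $m_{\epsilon_{n}}^{\theta+\gamma h_{\mu_{\epsilon_{k}}^{x}}}$ relative to $m_{\epsilon_{n}}^{\theta}$ is exactly $\exp(\gamma^{2}(\mu_{\epsilon_{n}}^{y},\mu_{\epsilon_{k}}^{x})_{H^{-2}})$, and by part (3) of Theorem \ref{thm:mu_epsilon} this is frozen at $\exp(\frac{\gamma^{2}}{2\pi^{2}}K_{0}(|x-y|))$ as soon as $|x-y|>\epsilon_{k}+\epsilon_{n}$. Letting $n\to\infty$ (via $m_{\epsilon_{n}}^{\theta}\rightharpoonup m^{\theta}$, the uniform bound $m_{\epsilon_{n}}^{\theta}(\Gamma)\le\overline{m^{\theta}}(\Gamma)$, and $m^{\theta}(\partial B_{\epsilon_{k}}(x))=0$ a.s.) and then $k\to\infty$ (monotone convergence, part (i), and $m^{\theta}(\{x\})=0$), the part of $m^{\theta+\gamma h_{\mu_{\epsilon_{k}}^{x}}}$ on $\{|x-y|>\epsilon_{k}\}$ increases to $\hat m^{\theta,x}$, while the leftover mass on $B_{\epsilon_{k}}(x)$ is, by part (2) of Theorem \ref{thm:mu_epsilon}, at most $Ce^{C\gamma^{2}G(\epsilon_{k})}m^{\theta}(\overline{B_{\epsilon_{k}}(x)})$ with an absolute constant $C<4$ (the covariance in the inclusion regime being $\le I_{0}(\epsilon_{k})G(\epsilon_{k})+O(1)\le I_{0}(1)G(\epsilon_{k})+O(1)$), hence vanishes as $k\to\infty$ for $\theta\in\Theta_{x}$. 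Thus $m^{\theta+\gamma h_{\mu_{\epsilon_{k}}^{x}}}(B_{r}(x))\to\hat m^{\theta,x}(B_{r}(x))$ a.s.\ (also using $\hat m^{\theta,x}(\partial B_{r}(x))=0$ a.s., which holds since $\mathbb{E}^{\mathcal{W}}[\hat m^{\theta,x}]$ is absolutely continuous), and since $F\in C_{0}$, bounded convergence turns the right side of (\ref{eq:plan-Ik}) into $\int_{\Gamma}\mathbb{E}^{\mathcal{W}}[F(x,\hat m^{\theta,x}(B_{r}(x)))]\,dx$, i.e.\ the right side of (\ref{eq:transit from M to W*N}).

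It remains to identify $\lim_{k}I_{k}$ with the left side of (\ref{eq:transit from M to W*N}). I would apply the weak convergence $m_{\epsilon_{k}}^{\theta}\rightharpoonup m^{\theta}$ to the bounded Borel function $x\mapsto F(x,m^{\theta}(B_{r}(x)))$, which is $m^{\theta}$--a.e.\ continuous for a.e.\ $\theta$: its discontinuity set lies in $\{x:m^{\theta}(\partial B_{r}(x))>0\}$, and $\mathbb{E}^{\mathcal{W}}\big[\int_{\Gamma}m^{\theta}(\partial B_{r}(x))\,m^{\theta}(dx)\big]=\iint_{|x-y|=r}\exp(\frac{\gamma^{2}}{2\pi^{2}}K_{0}(r))\,dx\,dy=0$, since the second moment measure of $m^{\theta}$ is $\exp(\frac{\gamma^{2}}{2\pi^{2}}K_{0}(|x-y|))\,dx\,dy$ (computed exactly as in Lemma \ref{lem:conv of total mass}, the diagonal contributing nothing because $\gamma^{2}<2\pi^{2}$). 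Dominated convergence in $\theta$, with dominating function $\|F\|_{u}\overline{m^{\theta}}(\Gamma)\in L^{2}(\mathcal{W})$, then yields $\lim_{k}I_{k}=\int_{\Theta}\int_{\Gamma}F(x,m^{\theta}(B_{r}(x)))\,m^{\theta}(dx)\,\mathcal{W}(d\theta)$, proving (\ref{eq:transit from M to W*N}); the joint--distribution statement is this identity for all such $F$, and the $x$--independence of the $\Theta$--marginal follows because the law of $\{\mathcal{I}(h_{\mu_{\epsilon}^{x}}):x\in\mathbb{R}^{4},\epsilon>0\}$ is invariant under spatial isometries. The main obstacle, as I see it, is the double limit in (ii) --- in particular controlling the contribution of the shrinking balls $B_{\epsilon_{k}}(x)$ to the Cameron--Martin--shifted measure --- for which the extra decay built into $\Theta_{x}$ is exactly what is required; the $m^{\theta}$--a.e.\ continuity invoked at the last step is a secondary, purely measure--theoretic, nuisance.
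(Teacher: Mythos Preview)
Your argument is correct and rests on the same Cameron--Martin idea as the paper's proof, but the execution differs in two places that are worth comparing.

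For part (i) you and the paper do the same dyadic--shell computation; your remark that the factor $4$ (rather than $1$) in the definition of $\Theta_{x}$ is exactly what makes the series converge is accurate and matches the paper's use of that set.

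For part (ii) the paper first replaces $\chi_{B_{r}(x)}$ by a continuous $\rho^{x}$ with $x\mapsto\rho^{x}$ continuous, so that $x\mapsto F(x,M^{\theta}(\rho^{x}))$ is genuinely continuous and weak convergence applies without further ado; only at the very end does it let $\rho^{x}_{l}\nearrow\chi_{B_{r}(x)}$. You bypass this regularization and instead argue that $x\mapsto m^{\theta}(B_{r}(x))$ is $m^{\theta}$--a.e.\ continuous via the second--moment identity $\mathbb{E}^{\mathcal W}[m^{\theta}(dx)m^{\theta}(dy)]=e^{\frac{\gamma^{2}}{2\pi^{2}}K_{0}(|x-y|)}\,dx\,dy$ on bounded sets. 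This is legitimate (it follows from the $L^{2}$ convergence in Lemma~\ref{lem:conv of total mass} together with a $\pi$--system argument), but it is not stated in the paper, so you should record it as a separate step.

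The more interesting divergence is in how the shifted measure is analysed. The paper works with the approximations $m_{\epsilon_{k}}^{\theta+\gamma h_{\mu_{\epsilon_{n}}^{x}}}$ and splits the $y$--integral into three pieces according to whether $|x-y|$ is below $\epsilon_{n}-\epsilon_{k}$, in the annulus $[\epsilon_{n}-\epsilon_{k},\epsilon_{n}+\epsilon_{k}]$, or above $\epsilon_{n}+\epsilon_{k}$; the annulus is controlled by Schwarz and the auxiliary quantity $\overline{m^{\theta,2\gamma}}$, and this is precisely where the paper invokes $\gamma^{2}<2\pi^{2}$. You instead pass to the limit in $n$ first and identify the \emph{limit} measure $m^{\theta+\gamma h_{\mu_{\epsilon_{k}}^{x}}}$ with a continuous density times $m^{\theta}$ on $\mathbb{R}^{4}\setminus\{x\}$ (case (2) of Theorem~\ref{thm:mu_epsilon} on $B_{\epsilon_{k}}(x)$, case (3) outside), the point being that for test functions supported away from $\{x\}$ and from $\partial B_{\epsilon_{k}}(x)$ the density $\exp(\gamma^{2}(\mu_{\epsilon_{n}}^{y},\mu_{\epsilon_{k}}^{x})_{H^{-2}})$ is eventually \emph{constant in $n$}. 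The annulus thus collapses to the sphere $\{|x-y|=\epsilon_{k}\}$, which both $m^{\theta}$ and (by Cameron--Martin) $m^{\theta+\gamma h}$ give zero mass a.s. This is a genuine simplification: it eliminates the $2\gamma$--measure entirely and makes the bound on the inner ball, $m^{\theta+\gamma h_{\mu_{\epsilon_{k}}^{x}}}(B_{\epsilon_{k}}(x))\le e^{I_{0}(\epsilon_{0})\gamma^{2}G(\epsilon_{k})}\,m^{\theta}(\overline{B_{\epsilon_{k}}(x)})$, an immediate consequence of the case--(2) formula. The paper's three--piece split buys a more elementary route through the approximations at the cost of the extra hypothesis on $\gamma$ at this step; your route is shorter but leans on a couple of measure--theoretic facts (identification of the second moment, nullity of spheres and of $\{x\}$ for the shifted measure) that deserve to be stated rather than absorbed into parentheses.
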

Based on Lemma \ref{lem:area dist under W*N}, instead of $m^{\theta}\left(B_{r}\left(x\right)\right)$
under $\mathcal{M}\left(dxd\theta\right)$, we may as well study the
distribution of $\hat{m}^{\theta,x}\left(B_{r}\left(x\right)\right)$
under $\mathcal{W}\left(d\theta\right)dx$. Similarly, to understand
$r_{\Lambda}\left(x,\theta\right)$ under $\mathcal{M}\left(dxd\theta\right)$,
we only need to look at the random variable given by 
\begin{equation}
\left(x,\theta\right)\mapsto\hat{r}_{\Lambda}\left(x,\theta\right)\equiv\begin{cases}
\sup\left\{ r>0:\hat{m}^{\theta,x}\left(B_{r}\left(x\right)\right)\leq\Lambda\right\}  & \mbox{if }\theta\in\Theta_{x},\\
0 & \mbox{otherwise.}
\end{cases}\label{eq:def of r^under W*N}
\end{equation}
under $\mathcal{W}\left(d\theta\right)dx$, whose marginal distribution
on $\Theta$ at $x$ is again independent of $x$.

To proceed from here, we will follow the same strategy as in \cite{DS1}.
For the sake of completeness, we will still present the main steps
here. For every $r>0$ and $\Lambda>0$, since the distribution of
$\hat{m}^{\theta,x}\left(B_{r}\left(x\right)\right)$ and $\hat{r}_{\Lambda}\left(x,\theta\right)$
under $\mathcal{W}$ does not depend on $x$, we can assume $x$ is
the origin and simplify the notation by denoting $B_{r}\equiv B_{r}\left(0\right)$,
$\hat{r}_{\Lambda}\left(\theta\right)\equiv\hat{r}_{\Lambda}\left(0,\theta\right)$
and $\hat{m}^{\theta}\left(B_{r}\right)\equiv\hat{m}^{\theta,0}\left(B_{r}\left(0\right)\right)$.
We want to find an ``approximation'' for $\hat{m}^{\theta}\left(B_{r}\right)$
by conditioning on the value of the GFF restricted to the ``boundary''
of $B_{r}$. To be precise, recall from the definition (\ref{eq:def of X_t})
that if $r\left(t\right)\equiv G^{-1}\left(t+G\left(R\right)\right)$
with $t\geq0$, then $X_{t}=\mathcal{I}\left(h_{\mu_{r\left(t\right)}^{0}}\right)-\mathcal{I}\left(h_{\mu_{R}^{0}}\right)$
has the same distribution of a standard Brownian motion. We want to
investigate the conditional expectation of $\hat{m}^{\theta}\left(B_{r\left(t\right)}\right)$
given $X_{t}$. To do this, we need to relate $\hat{m}^{\theta}\left(B_{r\left(t\right)}\right)$
to the approximating measures $m_{\epsilon_{n}}^{\theta}\left(dx\right)$,
which requires us to overcome the singularity of $e^{\frac{\gamma^{2}}{2\pi^{2}}K_{0}\left(\left|\cdot\right|\right)}$
at the origin. To this end, assume $\left\{ f_{l}:l\geq1\right\} \subseteq C_{c}\left(\overline{B_{R}}\right)$
is a sequence with $0\leq f_{l}\nearrow\chi_{B_{r\left(t\right)}}$
as $l\rightarrow\infty$, then%
\footnote{The notation ``$\alpha\vee\beta$'' denotes ``$\max\left\{ \alpha,\beta\right\} $''
and ``$\alpha\wedge\beta$'' denotes ``$\min\left\{ \alpha,\beta\right\} $''.%
}

\[
C_{c}\left(\overline{B_{R}}\right)\ni d_{l}\left(\cdot\right)\equiv f_{l}\left(\cdot\right)e^{\frac{\gamma^{2}}{2\pi^{2}}\left(K_{0}\left(\left|\cdot\right|\right)\wedge l\right)}\nearrow\chi_{B_{r\left(t\right)}}\left(\cdot\right)e^{\frac{\gamma^{2}}{2\pi^{2}}K_{0}\left(\left|\cdot\right|\right)}.
\]
Therefore, one can apply the convergence results in Theorem \ref{thm: construction of the measure}
to $d_{l}$ for every $l\geq1$. Together with the monotone convergence
theorem, one sees that for every $t\geq0$, 
\begin{equation}
\mathbb{E}^{\mathcal{W}}\left[\hat{m}^{\theta}\left(B_{r\left(t\right)}\right)|X_{t}\right]=\lim_{l\rightarrow\infty}\liminf_{n\rightarrow\infty}\mathbb{E}^{\mathcal{W}}\left[M_{\epsilon_{n}}^{\theta}\left(d_{l}\right)|X_{t}\right].\label{eq:cond exp limit eq}
\end{equation}
Given the order of taking limits in the right hand side of (\ref{eq:cond exp limit eq}),
for every $l\geq1$ and eventually all large $n$, we have $\overline{B_{\epsilon_{n}}\left(y\right)}\subseteq B_{r\left(t\right)}\subseteq B_{R}$
for every $y\in\mbox{supp}\left(d_{l}\right)$. Thus by a simple exercise
on conditional expectations of Gaussian random variables along with
(\ref{eq:cov inclusion}), we can derive from (\ref{eq:cond exp limit eq})
that 
\begin{equation}
\begin{aligned} & \mathbb{E}^{\mathcal{W}}\left[\hat{m}^{\theta}\left(B_{r(t)}\right)|X_{t}\right]\\
=\quad & \int_{B_{r\left(t\right)}}e^{\frac{\gamma^{2}}{2\pi^{2}}K_{0}\left(\left|y\right|\right)}\exp\left[\left(I_{0}\left(\left|y\right|\right)-I_{2}\left(\left|y\right|\right)P\left(t\right)\right)\gamma X_{t}\right]\\
 & \qquad\qquad\qquad\qquad\qquad\quad\cdot\exp\left[-\frac{\gamma^{2}t}{2}\left(I_{0}\left(\left|y\right|\right)-I_{2}\left(\left|y\right|\right)P\left(t\right)\right)^{2}\right]dy,
\end{aligned}
\label{eq:cond exp of area}
\end{equation}
where $P\left(t\right)\equiv\left(4\pi^{2}t\right)^{-1}\left[\left(I_{1}^{2}-I_{0}I_{2}\right)^{-1}\left(r\left(t\right)\right)-\left(I_{1}^{2}-I_{0}I_{2}\right)^{-1}\left(R\right)\right]$.

If one carefully examines the asymptotics near the origin of the Bessel
functions involved, one realizes that (\ref{eq:cond exp of area})
suggests the conditional expectation of $\hat{m}^{\theta}\left(B_{r(t)}\right)$
given $X_{t}$, when $t$ is large, is ``approximately'' 
\begin{equation}
\hat{m}^{\theta*}\left(B_{r(t)}\right)\equiv\exp\left(\gamma X_{t}-\left(8\pi^{2}-\frac{\gamma^{2}}{2}\right)t\right).\label{eq:cond exp of area in terms of X_t}
\end{equation}
For the moment we will ``pretend'' $\hat{m}^{\theta}\left(B_{r(t)}\right)$
is just $\hat{m}^{\theta*}\left(B_{r\left(t\right)}\right)$ and formulate
the KPZ relation under this circumstance.

For every $\Lambda>0$, we define the stopping time: 
\begin{equation}
T_{\Lambda}^{*}\equiv\inf\left\{ t\geq0:\hat{m}^{\theta*}\left(B_{r\left(t\right)}\right)=\exp\left(\gamma X_{t}-\left(8\pi^{2}-\frac{\gamma^{2}}{2}\right)t\right)\leq\Lambda\right\} .\label{eq:def of T^*}
\end{equation}
The distribution of $T_{\Lambda}^{*}$ can be completely determined
by a martingale argument, which is straightforward but worth repeating.
Namely, for every $s\leq0$, by Doob's stopping time theorem, $\left\{ \exp\left[sX_{t\wedge T_{\Lambda}^{*}}-\frac{s^{2}}{2}\left(t\wedge T_{\Lambda}^{*}\right)\right]:t\geq0\right\} $
is a uniformly bounded martingale. Furthermore, the continuity of
Brownian motion implies that 
\[
X_{T_{\Lambda}^{*}}=\frac{\log\Lambda}{\gamma}+\left(8\pi^{2}-\frac{\gamma^{2}}{2}\right)\frac{T_{\Lambda}^{*}}{\gamma}.
\]
Therefore, the fact that the expectation of the martingale at $t=0$
is equal to that at $t=T_{\Lambda}^{*}$ leads to the formula of the
moment generating function of $T_{\Lambda}^{*}$: 
\begin{equation}
\mathbb{E}^{\mathcal{W}}\left[\exp\left(-\frac{\gamma s^{2}-2s\left(8\pi^{2}-\frac{\gamma^{2}}{2}\right)}{2\gamma}T_{\Lambda}^{*}\right)\right]=\Lambda^{-s/\gamma}.\label{eq:char of T-Delta}
\end{equation}
From here we can derive our first version of the KPZ relation which
is easy but revealing.
\begin{lem}
\label{lem:KPZ for m^*}Assume $D\subseteq\mathbb{R}^{4}$ is a bounded
Borel set with Euclidean scaling exponent $\kappa\in\left[0,1\right]$,
i.e., 
\begin{equation}
\lim_{\lambda\downarrow0}\frac{\log\textrm{Vol}\left(D_{\lambda}\right)}{\log\lambda^{4}}=\kappa.\label{eq:exponent-Euclid.}
\end{equation}
For every $\Lambda>0$, let $T_{\Lambda}^{*}$ be as in (\ref{eq:def of T^*})
and define the random ``radius'': 
\[
\theta\mapsto r_{\Lambda}^{*}\left(\theta\right)\equiv G^{-1}\left(T_{\Lambda}^{*}\left(\theta\right)+G\left(R\right)\right),
\]
and the random ``neighborhood'': 
\[
\theta\mapsto D^{\Lambda*,\theta}\equiv\cup_{x\in D}B_{r_{\Lambda}^{*}\left(\theta\right)}\left(x\right).
\]
Then, we have 
\begin{equation}
\lim_{\Lambda\downarrow0}\frac{\log\mathbb{E}^{\mathcal{W}}\left[m^{\theta}\left(D^{\Lambda*,\theta}\right)\right]}{\log\Lambda}=\lim_{\Lambda\downarrow0}\frac{\log\mathbb{E}^{\mathcal{W}}\left[\left(r_{\Lambda}^{*}\right)^{4\kappa}\right]}{\log\Lambda}=K\label{eq:exponent-m^theta}
\end{equation}
 where $K\in\left[0,1\right]$ is determined by the following quadratic
relation with $\kappa$: 
\begin{equation}
\kappa=K\left(1-\frac{\gamma^{2}}{16\pi^{2}}\right)+\frac{\gamma^{2}}{16\pi^{2}}K^{2}.\label{eq:KPZ relation for m^*}
\end{equation}
 \end{lem}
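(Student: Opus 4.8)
\emph{Plan of proof.} Since $D^{\Lambda*,\theta}=\cup_{x\in D}B_{r_{\Lambda}^{*}\left(\theta\right)}\left(x\right)$ is simply the canonical Euclidean $r_{\Lambda}^{*}$-neighborhood $D_{r_{\Lambda}^{*}\left(\theta\right)}$ of $D$ with the \emph{random} radius $r_{\Lambda}^{*}$, the plan is to evaluate both limits in (\ref{eq:exponent-m^theta}) by reducing them to the Laplace transform of $T_{\Lambda}^{*}$ recorded in (\ref{eq:char of T-Delta}). Three facts will be used: (i) $\mathbb{E}^{\mathcal{W}}\left[m^{\theta}\left(A\right)\right]=\mbox{vol}\left(A\right)$ for every bounded open $A$; (ii) the Euclidean scaling hypothesis (\ref{eq:exponent-Euclid.}), i.e. $\mbox{vol}\left(D_{\rho}\right)=\rho^{4\kappa+o\left(1\right)}$ as $\rho\downarrow0$; and (iii) the small-radius asymptotics $G\left(r\right)\sim-\frac{1}{2\pi^{2}}\log r$, which yields $\log r_{\Lambda}^{*}=-2\pi^{2}T_{\Lambda}^{*}\left(1+o\left(1\right)\right)$ as $T_{\Lambda}^{*}\uparrow\infty$. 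I also record that the process $\gamma X_{t}-\left(8\pi^{2}-\gamma^{2}/2\right)t$ is a Brownian motion with \emph{strictly negative} drift (here $\gamma^{2}<2\pi^{2}$ is used), so $0\leq T_{\Lambda}^{*}<\infty$ a.s., $0<r_{\Lambda}^{*}=G^{-1}\left(T_{\Lambda}^{*}+G\left(R\right)\right)\leq R$, and $\mathcal{W}\left(T_{\Lambda}^{*}\leq a\left|\log\Lambda\right|\right)$ decays faster than any power of $\Lambda$ when $a$ is small. Finally, I may assume $\kappa>0$: for $\kappa=0$, (\ref{eq:KPZ relation for m^*}) forces $K=0$ and all three quantities in (\ref{eq:exponent-m^theta}) are comparable to a positive constant; and $\kappa>0$ forces $\mbox{vol}\left(\overline{D}\right)=0$.

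\emph{First equality in (\ref{eq:exponent-m^theta}).} I would show $\mathbb{E}^{\mathcal{W}}\left[m^{\theta}\left(D^{\Lambda*,\theta}\right)\right]=\mathbb{E}^{\mathcal{W}}\left[\mbox{vol}\left(D_{r_{\Lambda}^{*}}\right)\right]$ up to terms that do not affect $\log\left(\cdot\right)/\log\Lambda$. The idea: $r_{\Lambda}^{*}$ is a functional of the radial process $\left\{ X_{t}\right\} $ at the origin, while by (\ref{eq:cov nonoverlap}) the variables $\mathcal{I}\left(h_{\mu_{\epsilon}^{x}}\right)$ with $\left|x\right|$ bounded away from $\overline{B_{R}\left(0\right)}$ (and $\epsilon$ small) are uncorrelated with every $X_{t}$, hence --- being jointly Gaussian --- independent of $\sigma\left\{ X_{t}:t\geq0\right\} $; passing to the limiting measure, the restriction of $m^{\theta}$ to $\overline{B_{R}\left(0\right)}^{c}$ is independent of $r_{\Lambda}^{*}$, so Fubini and fact (i) give $\mathbb{E}^{\mathcal{W}}\left[m^{\theta}\left(D_{r_{\Lambda}^{*}}\cap\overline{B_{R}\left(0\right)}^{c}\right)\right]=\mathbb{E}^{\mathcal{W}}\left[\mbox{vol}\left(D_{r_{\Lambda}^{*}}\cap\overline{B_{R}\left(0\right)}^{c}\right)\right]$. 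For the piece over $\overline{B_{R}\left(0\right)}$ I would use, for $\delta\in\left(0,R\right)$, that on $\left\{ r_{\Lambda}^{*}\leq\delta\right\} $ one has $D_{r_{\Lambda}^{*}}\cap\overline{B_{R}\left(0\right)}\subseteq D_{\delta}\cap\overline{B_{R}\left(0\right)}$, while on $\left\{ r_{\Lambda}^{*}>\delta\right\} $ one invokes square-integrability of $m^{\theta}$ over compacts (Lemma \ref{lem:conv of total mass}) together with Cauchy--Schwarz; choosing $\delta=\delta\left(\Lambda\right)\downarrow0$ well --- feasible since $\mbox{vol}\left(D_{\delta}\right)=\delta^{4\kappa+o\left(1\right)}$, $\mbox{vol}\left(\overline{D}\right)=0$, and $\mathcal{W}\left(r_{\Lambda}^{*}>\delta\right)$ is bounded by a positive power of $\Lambda$ via the first-passage estimate above --- makes these $\overline{B_{R}\left(0\right)}$-contributions (for $m^{\theta}$ and for $\mbox{vol}$ alike) negligible for $\log\left(\cdot\right)/\log\Lambda$; this is the delicate point (see below). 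Granting it, facts (ii) and (iii) give: on the event $\left\{ T_{\Lambda}^{*}\geq t_{0}\right\} $ (whose complement is negligible by the first-passage bound) and for any fixed $\delta'>0$, both $\mbox{vol}\left(D_{r_{\Lambda}^{*}}\right)$ and $\left(r_{\Lambda}^{*}\right)^{4\kappa}$ lie between $e^{-8\pi^{2}\kappa\left(1+\delta'\right)T_{\Lambda}^{*}}$ and $e^{-8\pi^{2}\kappa\left(1-\delta'\right)T_{\Lambda}^{*}}$; hence both limits in (\ref{eq:exponent-m^theta}) equal $\lim_{\delta'\to0}\lim_{\Lambda\downarrow0}\frac{1}{\log\Lambda}\log\mathbb{E}^{\mathcal{W}}\left[e^{-8\pi^{2}\kappa\left(1\pm\delta'\right)T_{\Lambda}^{*}}\right]$.

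\emph{Explicit computation and the value of $K$.} For $\beta\geq0$, the function $s\mapsto\frac{s^{2}}{2}-\frac{\left(16\pi^{2}-\gamma^{2}\right)s}{2\gamma}$ is strictly decreasing on $\left(-\infty,0\right]$ from $+\infty$ to $0$ (because $16\pi^{2}-\gamma^{2}>0$), so it takes the value $\beta$ at a unique $s=s\left(\beta\right)\leq0$, namely the non-positive root of $\gamma s^{2}-\left(16\pi^{2}-\gamma^{2}\right)s-2\gamma\beta=0$. Since this coefficient is exactly the one appearing in (\ref{eq:char of T-Delta}), taking $s=s\left(\beta\right)$ there gives $\mathbb{E}^{\mathcal{W}}\left[e^{-\beta T_{\Lambda}^{*}}\right]=\Lambda^{-s\left(\beta\right)/\gamma}$, so $\frac{1}{\log\Lambda}\log\mathbb{E}^{\mathcal{W}}\left[e^{-\beta T_{\Lambda}^{*}}\right]\to-s\left(\beta\right)/\gamma$. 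Taking $\beta=8\pi^{2}\kappa$ and letting $\delta'\to0$ (using continuity of $s\left(\cdot\right)$) yields the two equalities in (\ref{eq:exponent-m^theta}) with $K:=-s\left(8\pi^{2}\kappa\right)/\gamma\geq0$. Finally, substituting $s\left(8\pi^{2}\kappa\right)=-\gamma K$ into $\gamma s^{2}-\left(16\pi^{2}-\gamma^{2}\right)s-16\pi^{2}\gamma\kappa=0$ and dividing by $16\pi^{2}\gamma$ gives exactly (\ref{eq:KPZ relation for m^*}); and since $K\mapsto\left(1-\frac{\gamma^{2}}{16\pi^{2}}\right)K+\frac{\gamma^{2}}{16\pi^{2}}K^{2}$ sends $0\mapsto0$, $1\mapsto1$ and is strictly increasing on $\left[0,1\right]$, the $K$ determined by a given $\kappa\in\left[0,1\right]$ lies in $\left[0,1\right]$.

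The main obstacle is the decoupling in the second paragraph: rigorously separating the random radius $r_{\Lambda}^{*}$, built from the field near the origin, from the random measure $m^{\theta}$, and disposing of the contribution of the ball $\overline{B_{R}\left(0\right)}$ where the two are genuinely correlated --- the independence from (\ref{eq:cov nonoverlap}) handles everything outside $\overline{B_{R}\left(0\right)}$ for free, and inside one must trade the shrinking of $D_{r_{\Lambda}^{*}}$ against the tails of $r_{\Lambda}^{*}$, possibly via the exchange-of-integration device of Lemma \ref{lem:area dist under W*N}. Everything else is the explicit Bessel/Brownian bookkeeping already assembled before the statement, plus a routine squeeze to absorb the $o\left(1\right)$'s.
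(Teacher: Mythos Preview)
Your computation of $K$ is exactly the paper's: reduce $\mathbb{E}^{\mathcal{W}}\bigl[(r_{\Lambda}^{*})^{4\kappa}\bigr]$ to $\mathbb{E}^{\mathcal{W}}\bigl[e^{-8\pi^{2}\kappa T_{\Lambda}^{*}}\bigr]$ via the asymptotic $G(r)\sim-\frac{1}{2\pi^{2}}\log r$, then pick $s\in[-\gamma,0]$ with $\frac{s^{2}}{2}-\frac{s}{\gamma}\bigl(8\pi^{2}-\frac{\gamma^{2}}{2}\bigr)=8\pi^{2}\kappa$ and read off $K=-s/\gamma$ from (\ref{eq:char of T-Delta}). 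Your $\delta'$-squeeze to absorb the $o(1)$ is a clean way to make the paper's ``$\approx$'' rigorous.

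Where the two arguments diverge is the first equality in (\ref{eq:exponent-m^theta}). The paper simply conditions on $r_{\Lambda}^{*}=r$ and asserts the conditional expectation of $m^{\theta}(D_{r})$ is proportional to $\mbox{vol}(D_{r})$; the dependence between $r_{\Lambda}^{*}$ (a functional of the field near the origin) and $m^{\theta}$ is not addressed. You correctly identify this as the crux, and your decoupling is sound on $\overline{B_{R}(0)}^{c}$: by (\ref{eq:cov nonoverlap}), $\mbox{Cov}\bigl(\mathcal{I}(h_{\mu_{\epsilon}^{x}}),X_{t}\bigr)=\frac{1}{2\pi^{2}}K_{0}(|x|)-\frac{1}{2\pi^{2}}K_{0}(|x|)=0$ once $|x|>R+\epsilon$, so $m^{\theta}\!\upharpoonright_{\overline{B_{R}(0)}^{c}}$ is genuinely independent of $r_{\Lambda}^{*}$. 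However, your handling of the interior piece fails when $D$ meets $B_{R}(0)$: if, say, $D\subset B_{R}(0)$, then for small $r_{\Lambda}^{*}$ the whole neighborhood $D_{r_{\Lambda}^{*}}$ sits inside $\overline{B_{R}(0)}$, the exterior contribution vanishes identically, and your scheme is trying to declare the entire quantity negligible relative to zero. Your upper bound $\mathbb{E}\bigl[m^{\theta}(D_{r_{\Lambda}^{*}}\cap\overline{B_{R}})\bigr]\le\mbox{vol}(D_{\delta})+C\,\mathcal{W}(r_{\Lambda}^{*}>\delta)^{1/2}$ still yields the correct upper bound on the log-limit, but no matching lower bound survives in this case. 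Note also that translation invariance does not rescue you: moving the base point of $X_{t}$ to $x_{0}$ is equivalent, under $\mathcal{W}$, to replacing $D$ by $D-x_{0}$ while keeping the base at $0$, so $\mbox{dist}(0,D)$ is unchanged. To close the gap you would need either a direct estimate on $\mathbb{E}\bigl[m^{\theta}(D_{r})\,\big|\,\sigma\{X_{t}\}\bigr]$ over $\overline{B_{R}(0)}$ (e.g.\ via Cameron--Martin, controlling the shift $h$ induced by conditioning), or to observe that only the \emph{second} equality in (\ref{eq:exponent-m^theta}) is actually used downstream (in Lemma~\ref{lem: ratio of r^* with r} and Theorem~\ref{thm:KPZ}).
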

\begin{proof}
Clearly in this setting, we want to cover $D$ with open balls that
all have the same ``critical'' radius determined by the stopping
time associated with the martingale $\hat{m}^{\theta*}\left(B_{r\left(t\right)}\right)$
(defined in (\ref{eq:cond exp of area in terms of X_t})). We don't
have to worry about the measurability of $\theta\mapsto m^{\theta}\left(D^{\Lambda*,\theta}\right)$
because both $\theta\mapsto r_{\Lambda}^{*}\left(\theta\right)$ and
$\left(\theta,r\right)\mapsto m^{\theta}\left(D_{r}\right)$ are measurable.
Conditioning on $r_{\Lambda}^{*}=r\in(0,R]$, $D^{\Lambda*,\theta}$
is bounded and open, and the conditional expectation of $m^{\theta}\left(D^{\Lambda*,\theta}\right)$
is a multiple (reciprocal of the probability density function of $r_{\Lambda}^{*}$
at $r$) of $\mbox{vol}\left(D_{r}\right)$, which, according to (\ref{eq:exponent-Euclid.}),
\footnote{Throughout this article, the notation ``$\approx$'' means ``bounded
from above and below by a universal constant multiple of''.%
}``$\approx$'' $r^{4\kappa}$ for every $r\in(0,R]$. This means
$\mathbb{E}^{\mathcal{W}}\left[m^{\theta}\left(D^{\Lambda*,\theta}\right)\right]$
$\approx\mathbb{E}^{\mathcal{W}}\left[\left(r_{\Lambda}^{*}\right)^{4\kappa}\right]$
and further $\approx\mathbb{E}^{\mathcal{W}}\left[\exp\left(-8\pi^{2}\kappa T_{\Lambda}^{*}\right)\right]$.
Given (\ref{eq:char of T-Delta}), clearly one wants to set $8\pi^{2}\kappa$
to be $\frac{s^{2}}{2}-\frac{s}{\gamma}\left(8\pi^{2}-\frac{\gamma^{2}}{2}\right)$
for some $s\in\left[-\gamma,0\right]$, in which case 
\[
\lim_{\Lambda\downarrow0}\frac{\log\mathbb{E}^{\mathcal{W}}\left[m^{\theta}\left(D^{\Lambda*,\theta}\right)\right]}{\log\Lambda}=\lim_{\Lambda\downarrow0}\frac{\log\mathbb{E}^{\mathcal{W}}\left[\exp\left(-8\pi^{2}\kappa T_{\Lambda}^{*}\right)\right]}{\log\Lambda}=-\frac{s}{\gamma}.
\]
The results in (\ref{eq:exponent-m^theta}) and (\ref{eq:KPZ relation for m^*})
follow immediately after setting $K\equiv-\frac{s}{\gamma}$. 
\end{proof}
Next, we argue that $\hat{m}^{\theta*}\left(B_{r}\right)$ is indeed
a ``legitimate'' approximation for $\hat{m}^{\theta}\left(B_{r}\right)$
in the sense that $r_{\Lambda}^{*}$, as defined in Lemma \ref{lem:KPZ for m^*},
approximates $\hat{r}_{\Lambda}:\theta\mapsto\hat{r}_{\Lambda}\left(\theta\right)$
when ``compared'' in the limit of the logarithm ratio.
\begin{lem}
\label{lem: ratio of r^* with r}Assume the pair $\left(\kappa,K\right)\in\left[0,1\right]^{2}$
satisfies the quadratic relation in (\ref{eq:KPZ relation for m^*}).
Then,  
\begin{equation}
\lim_{\Lambda\downarrow0}\frac{\log\mathbb{E}^{\mathcal{W}}\left[\left(\hat{r}_{\Lambda}\right)^{4\kappa}\right]}{\log\Lambda}=K\mbox{ or equivalently }\lim_{\Lambda\downarrow0}\frac{\log\mathbb{E}^{\mathcal{W}}\left[\left(\hat{r}_{\Lambda}\right)^{4\kappa}\right]}{\log\mathbb{E}^{\mathcal{W}}\left[\left(r_{\Lambda}^{*}\right)^{4\kappa}\right]}=1.\label{eq:ratio of r to r^*}
\end{equation}

\end{lem}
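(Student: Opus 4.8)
The plan is to reparametrize by scale and reduce (\ref{eq:ratio of r to r^*}) to a comparison between a hitting time of $\hat m^\theta$ and the Brownian stopping time $T^*_\Lambda$ of (\ref{eq:def of T^*}). With $r(t)\equiv G^{-1}(t+G(R))$ as in (\ref{eq:def of X_t}) and $\hat r_\Lambda$ as in (\ref{eq:def of r^under W*N}) (taking $x=0$), set $\hat T_\Lambda(\theta)\equiv\inf\{t\ge0:\hat m^\theta(B_{r(t)})\le\Lambda\}$. Since $t\mapsto\hat m^\theta(B_{r(t)})$ is continuous and non-increasing and (by non-degeneracy of $m^\theta$, cf. Lemma \ref{lem: tail estimate}) starts at $\hat m^\theta(B_R)>0$ a.s., one has $\hat r_\Lambda=r(\hat T_\Lambda)$ on an event of $\mathcal{W}$-probability tending to $1$ as $\Lambda\downarrow0$, whose complement can be shown to contribute negligibly to the expectations below. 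Feeding the Bessel series into (\ref{eq: variance G}) gives $G(r)=-\tfrac{1}{2\pi^2}\log r+O(1)$ near $0$, hence $r(t)\approx e^{-2\pi^2 t}$ for $t\ge0$ and $(\hat r_\Lambda)^{4\kappa}\approx e^{-8\pi^2\kappa\hat T_\Lambda}$, so it suffices to prove $\lim_{\Lambda\downarrow0}\log\mathbb{E}^{\mathcal{W}}[e^{-8\pi^2\kappa\hat T_\Lambda}]/\log\Lambda=K$; by Lemma \ref{lem:KPZ for m^*} and (\ref{eq:char of T-Delta}) the same quantity with $T^*_\Lambda$ in place of $\hat T_\Lambda$ equals $\Lambda^K$ exactly (with $K=-s/\gamma$ in the notation there). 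Everything thus reduces to controlling the discrepancy between $\hat T_\Lambda$ and $T^*_\Lambda$.

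The heart of the matter is a multiplicative comparison of $\hat m^\theta(B_{r(t)})$ with its Brownian proxy $\hat m^{\theta*}(B_{r(t)})=\exp(\gamma X_t-(8\pi^2-\tfrac{\gamma^2}{2})t)$ from (\ref{eq:cond exp of area in terms of X_t}). First, $\hat m^\theta(B_{r(t)})$ is an a.s. limit, as $\epsilon\downarrow0$, of reversed martingales $\epsilon\mapsto\int_{B_{r(t)}}e^{\frac{\gamma^2}{2\pi^2}K_0(|y|)}E^\theta_\epsilon(y)\,dy$ (with the $K_0$-weight suitably truncated near $0$) — the martingale property in the scale parameter coming from the independent-reversed-increments structure of Theorem \ref{thm:mu_epsilon}(1), the deterministic weight not disturbing it — and the exact conditional-expectation formula (\ref{eq:cond exp of area}), together with the near-origin Bessel asymptotics ($I_0(|y|)-I_2(|y|)P(t)\to1$ uniformly on $\{|y|\le r(t)\}$ and $\int_{B_{r(t)}}e^{\frac{\gamma^2}{2\pi^2}K_0(|y|)}\,dy\approx r(t)^{4-\gamma^2/2\pi^2}$, checked from the series for the $I_k$ and $K_0$), shows $\mathbb{E}^{\mathcal{W}}[\hat m^\theta(B_{r(t)})\mid X_t]\approx\hat m^{\theta*}(B_{r(t)})$. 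Writing $W_t\equiv\hat m^\theta(B_{r(t)})/\hat m^{\theta*}(B_{r(t)})$, one then argues that $W_t$ converges a.s. as $t\to\infty$ to $W_\infty\in(0,\infty)$, asymptotically independent of the path $(X_u)_{u\ge0}$, with $\mathbb{E}^{\mathcal{W}}[W_\infty^{-K}]<\infty$: convergence and $L^2$-tightness of $W_t$ (hence $W_\infty<\infty$ and uniform integrability) come from the second-moment estimates of Section 3 applied to the reweighted mass, which is exactly where $\gamma^2<2\pi^2$ is used; strict positivity $W_\infty>0$ a.s. follows from a $0$--$1$ law combined with the non-loss of expectation $\mathbb{E}^{\mathcal{W}}[\hat m^\theta(B_{r(t)})]\approx\mathbb{E}^{\mathcal{W}}[\hat m^{\theta*}(B_{r(t)})]$; and $\mathbb{E}^{\mathcal{W}}[W_\infty^{-K}]<\infty$ reflects the very thin lower tail of subcritical multiplicative-chaos masses.

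Granting this, the conclusion is short. Since $\hat T_\Lambda\to\infty$ as $\Lambda\downarrow0$ while $W_t\to W_\infty\in(0,\infty)$, one has $\hat m^{\theta*}(B_{r(\hat T_\Lambda)})=\Lambda/W_{\hat T_\Lambda}$ with $W_{\hat T_\Lambda}\to W_\infty$, and the linear-in-$\log$ drift of $t\mapsto\hat m^{\theta*}(B_{r(t)})$ together with the continuity of $X$ give $\hat T_\Lambda=T^*_{\Lambda/W_\infty}+o(1)$, so $e^{-8\pi^2\kappa\hat T_\Lambda}/e^{-8\pi^2\kappa T^*_{\Lambda/W_\infty}}\to1$. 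Rerunning the martingale computation behind (\ref{eq:char of T-Delta}) with the random target level $\Lambda/W_\infty$ (using the asymptotic independence of $W_\infty$ and $(X_u)_{u\ge0}$) yields $\mathbb{E}^{\mathcal{W}}[e^{-8\pi^2\kappa\hat T_\Lambda}]\approx\mathbb{E}^{\mathcal{W}}[(\Lambda/W_\infty)^K]=\Lambda^K\,\mathbb{E}^{\mathcal{W}}[W_\infty^{-K}]\approx\Lambda^K$, the passages to the limit being justified by the uniform integrability of $W_t$ and the negligibility of the exceptional event. Dividing by $\log\Lambda$ and letting $\Lambda\downarrow0$ gives the first equality of (\ref{eq:ratio of r to r^*}); the second follows at once from (\ref{eq:exponent-m^theta}) when $\kappa>0$, the degenerate case $\kappa=K=0$ being trivial since then $(\hat r_\Lambda)^{4\kappa}=(r^*_\Lambda)^{4\kappa}=1$. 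The main obstacle is the middle paragraph — pinning down the right renormalized process, showing the Bessel corrections in (\ref{eq:cond exp of area}) are only bounded factors, and, above all, proving $W_\infty>0$ a.s. with $\mathbb{E}^{\mathcal{W}}[W_\infty^{-K}]<\infty$ — which is precisely where the condition $\gamma^2<2\pi^2$ is essential.
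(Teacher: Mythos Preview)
Your strategy mirrors the two-dimensional Duplantier--Sheffield argument: renormalize the mass by the Brownian proxy, show the ratio $W_t=\hat m^\theta(B_{r(t)})/\hat m^{\theta*}(B_{r(t)})$ converges to a nondegenerate limit asymptotically independent of the driving Brownian motion, and read off the exponent from (\ref{eq:char of T-Delta}) at the random level $\Lambda/W_\infty$. The paper does \emph{not} follow this route; as the authors remark at the start of Section~5, the absence of exact scale/conformal invariance in $\mathbb R^4$ (the covariance formulas (\ref{eq:cov inclusion})--(\ref{eq:cov nonoverlap}) carry Bessel corrections) makes the clean 2D ``peeling'' picture unavailable, and they replace it by the quantitative tail estimate of Lemma~\ref{lem: tail estimate}.

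The middle paragraph is where your proposal has genuine gaps rather than merely technical ones. (i) The claim that $\epsilon\mapsto\int_{B_{r(t)}}e^{\frac{\gamma^2}{2\pi^2}K_0(|y|)}E^\theta_\epsilon(y)\,dy$ is a reversed martingale is incorrect: the independent-reversed-increments property in Theorem~\ref{thm:mu_epsilon}(1) is pointwise in the center, and integrating over $y$ destroys it since the increments at distinct $y$ are correlated via (\ref{eq:cov nonoverlap}). (ii) The convergence of $W_t$ as $t\to\infty$ is not a consequence of the Section~3 estimates, which control the $\epsilon_n\downarrow0$ limit for a \emph{fixed} ball; here the ball itself shrinks, and without exact scaling there is no a priori reason $W_t$ should stabilize. (iii) The ``asymptotic independence of $W_\infty$ and $(X_u)$'' is asserted without mechanism; in 2D this comes from the exact decomposition of the circle-averaged field into a radial Brownian motion plus an independent lateral part, which has no analogue here. (iv) Both $W_\infty>0$ a.s.\ and $\mathbb E[W_\infty^{-K}]<\infty$ are exactly the content that the paper isolates in Lemma~\ref{lem: tail estimate}; invoking a generic $0$--$1$ law or ``thin lower tails of subcritical chaos'' does not supply a proof in this setting.

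For comparison, the paper argues the two bounds in (\ref{eq:upper and lower bound of log ratio}) separately and much more directly. For the upper bound it works with $T_\Lambda=G(\hat r_\Lambda)-G(R)$, splits $\mathbb E[e^{-8\pi^2\kappa T_\Lambda}]$ at a carefully chosen threshold $S=S(\Lambda,\delta,\rho)$, bounds the piece $\{T_\Lambda<S\}$ by the tail estimate (\ref{eq:tail estimate}) (this is where $\gamma^2<2\pi^2$ enters, via Lemma~\ref{lem: tail estimate}), and checks an elementary quadratic inequality in $K$ to handle $\{T_\Lambda\ge S\}$. For the lower bound it simply compares with $T^*_\Lambda$: on $\{T_\Lambda\le T^*_\Lambda\}$ one has $e^{-8\pi^2\kappa T_\Lambda}\ge e^{-8\pi^2\kappa T^*_\Lambda}$, and the mass of $\{T_\Lambda>T^*_\Lambda\}$ under the $T^*_\Lambda$-weighting is bounded by a constant $c<1$ via Chebyshev and the conditional-expectation formula (\ref{eq:cond exp of area}). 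No limit object $W_\infty$, no independence, and no negative-moment bound for a chaos mass are needed---only the explicit exponential tail of Lemma~\ref{lem: tail estimate}. If you want to salvage your approach, the honest path is to prove that lemma first and then observe that it already delivers both the positivity and the negative-moment control you are assuming.
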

The proof of this lemma is given in Section 5. There we also prove
a preliminary result (Lemma \ref{lem: tail estimate}) which actually
implies the almost sure non-triviality of the measure $\hat{m}^{\theta}\left(dx\right)$
as well as $m^{\theta}\left(dx\right)$. Most importantly, this lemma
builds up the final passage to the KPZ relation for $m^{\theta}\left(dx\right)$,
the ``true'' case in which we are interested. Again, we will only
present the statement here and leave the proof to the next section.
\begin{thm}
\label{thm:KPZ}Let $D\subseteq\mathbb{R}^{4}$ be a bounded Borel
set with Euclidean scaling exponent $\kappa\in\left[0,1\right]$.
Then $D$ has quantum scaling exponent $K\in\left[0,1\right]$ as
defined in (\ref{eq:def of K}), where $K$ is related to $\kappa$
by (\ref{eq:KPZ relation for m^*}).
\end{thm}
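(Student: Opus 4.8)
The plan is to express $\mathbb{E}^{\mathcal{W}}\bigl[m^{\theta}\bigl(D^{\Lambda,\theta}\bigr)\bigr]$ --- the quantity whose logarithmic decay rate defines the quantum scaling exponent in (\ref{eq:def of K}) --- entirely in terms of the scalar random ``radius'' $\hat{r}_{\Lambda}$ of (\ref{eq:def of r^under W*N}), and then quote Lemma \ref{lem: ratio of r^* with r}. Write $\rho(x)\equiv\mathrm{dist}(x,D)$; since $D$ is bounded and (implicitly) non-empty, $\rho$ is a proper Lipschitz function. On the $\mathcal{W}$-full event $\bigcap_{x}\Theta_{x}$ one has $r_{\Lambda}(x,\theta)>0$ for every $x$ (because $m^{\theta}(\overline{B_{\epsilon_{n}}(x)})\to0$ by (\ref{eq:def of Theta_x})), so by (\ref{eq:vol-delta nbhd of D}) together with the monotonicity and left-continuity of $r\mapsto m^{\theta}(B_{r}(x))$ one gets the exact identification $x\in D^{\Lambda,\theta}\iff m^{\theta}(\overline{B_{\rho(x)}(x)})\le\Lambda$. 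As noted after Theorem \ref{thm: construction of the measure}, $\mathbb{E}^{\mathcal{W}}[m^{\theta}(S_{s}(x))]=\mathrm{vol}(S_{s}(x))=0$, so fixed spheres carry no mass $\mathcal{M}$-a.e.; hence $\chi_{D^{\Lambda,\theta}}(x)=\chi\{m^{\theta}(B_{\rho(x)}(x))\le\Lambda\}$ holds $\mathcal{M}$-a.e.

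Next I would upgrade the exchange-of-integration identity (\ref{eq:transit from M to W*N}) of Lemma \ref{lem:area dist under W*N} in two routine ways: from $F\in C_{0}$ to bounded Borel $F$ supported in $\Gamma\times[0,A]$ (a monotone-class argument, using that $m^{\theta}$ and $\hat{m}^{\theta,x}$ are genuine measures), and from a fixed radius to a bounded measurable radius $x\mapsto\rho(x)$ (approximate $\rho$ by simple functions and pass to the limit, using the $\mathcal{W}$-a.s.\ continuity of $r\mapsto\hat{m}^{\theta,x}(B_{r}(x))$, once more because fixed spheres carry no mass). Applying the upgraded identity with $F(x,a)=\chi\{a\le\Lambda\}$, then using that by Lemma \ref{lem:area dist under W*N} the law under $\mathcal{W}$ of $\bigl(\hat{m}^{\theta,x}(B_{r}(x))\bigr)_{r>0}$ is independent of $x$ and equals that of $\bigl(\hat{m}^{\theta}(B_{r})\bigr)_{r>0}$, and finally Tonelli, yields
\[
\mathbb{E}^{\mathcal{W}}\bigl[m^{\theta}\bigl(D^{\Lambda,\theta}\bigr)\bigr]=\int_{\mathbb{R}^{4}}\mathcal{W}\bigl(\hat{m}^{\theta,x}\bigl(B_{\rho(x)}(x)\bigr)\le\Lambda\bigr)\,dx=\int_{\mathbb{R}^{4}}\mathcal{W}\bigl(\hat{r}_{\Lambda}\ge\rho(x)\bigr)\,dx=\mathbb{E}^{\mathcal{W}}\bigl[\mathrm{vol}\bigl(D_{\hat{r}_{\Lambda}}\bigr)\bigr],
\]
with $D_{r}$ the canonical $r$-neighborhood of $D$; here I used $\{\hat{r}_{\Lambda}\ge s\}=\{\hat{m}^{\theta}(B_{s})\le\Lambda\}$, which is exact by left-continuity of $r\mapsto\hat{m}^{\theta}(B_{r})$.

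Finally, by (\ref{eq:exponent-Euclid.}), for every $\delta>0$ there is $s_{0}\in(0,1]$ with $s^{4\kappa+\delta}\le\mathrm{vol}(D_{s})\le s^{4\kappa-\delta}$ on $(0,s_{0}]$, while $\mathrm{vol}(D_{s})\approx1$ on $[s_{0},1]$ and $\mathrm{vol}(D_{s})\le C(1+s)^{4}$ for all $s>0$ since $D$ is bounded. Splitting $\mathbb{E}^{\mathcal{W}}[\mathrm{vol}(D_{\hat{r}_{\Lambda}})]$ over $\{\hat{r}_{\Lambda}\le s_{0}\}$, $\{s_{0}<\hat{r}_{\Lambda}\le1\}$ and $\{\hat{r}_{\Lambda}>1\}$: the first part is sandwiched between $\mathbb{E}^{\mathcal{W}}[\hat{r}_{\Lambda}^{\,4\kappa+\delta};\hat{r}_{\Lambda}\le s_{0}]$ and $\mathbb{E}^{\mathcal{W}}[\hat{r}_{\Lambda}^{\,4\kappa-\delta};\hat{r}_{\Lambda}\le s_{0}]$; the middle part is $O\bigl(\mathcal{W}(\hat{r}_{\Lambda}>s_{0})\bigr)$; the last part is $\le C\,\mathbb{E}^{\mathcal{W}}[(1+\hat{r}_{\Lambda})^{4};\hat{r}_{\Lambda}>1]$. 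The last part is the crux: $\{\hat{r}_{\Lambda}>R\}=\{\hat{m}^{\theta}(B_{R})\le\Lambda\}$, so controlling it requires a small-ball lower estimate for $\hat{m}^{\theta}$ --- exactly the content of Lemma \ref{lem: tail estimate} --- together with a polynomial upper tail for $\hat{m}^{\theta}(B_{R})$ as $R\to\infty$, which follows from the $L^{2}(\mathcal{W})$-integrability of $\overline{m^{\theta,2\gamma}}$; this is the step forcing $0<\gamma^{2}<2\pi^{2}$. Granting that this part is negligible on the logarithmic scale, and noting that the exponents $4\kappa\pm\delta=4(\kappa\pm\tfrac{\delta}{4})$ correspond via (\ref{eq:KPZ relation for m^*}) to values $K^{\pm}_{\delta}\to K$ (by continuity of the monotone quadratic relation, with the endpoints $\kappa\in\{0,1\}$ treated directly), Lemma \ref{lem: ratio of r^* with r} applied at $\kappa\pm\tfrac{\delta}{4}$ gives that $\log\mathbb{E}^{\mathcal{W}}[m^{\theta}(D^{\Lambda,\theta})]/\log\Lambda$ and $\log\mathbb{E}^{\mathcal{W}}[\hat{r}_{\Lambda}^{\,4\kappa}]/\log\Lambda$ have the same limit as $\Lambda\downarrow0$ after letting $\delta\downarrow0$; that limit is the $K\in[0,1]$ solving (\ref{eq:KPZ relation for m^*}), which is the assertion. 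The main obstacle is precisely the bound on $\{\hat{r}_{\Lambda}>1\}$: one must show that the rare configurations of $\theta$ for which a macroscopic ball has anomalously small $\hat{m}^{\theta}$-mass do not dominate $\mathbb{E}^{\mathcal{W}}[\mathrm{vol}(D_{\hat{r}_{\Lambda}})]$; the only other genuinely non-bookkeeping point is the upgrade of Lemma \ref{lem:area dist under W*N} to an $x$-dependent radius.
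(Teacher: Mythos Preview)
Your proposal is essentially the paper's own argument: both reduce $\mathbb{E}^{\mathcal{W}}[m^{\theta}(D^{\Lambda,\theta})]$ via Lemma~\ref{lem:area dist under W*N} to $\mathbb{E}^{\mathcal{W}}[\mathrm{vol}(D_{\hat{r}_{\Lambda}})]$, split according to the size of $\hat{r}_{\Lambda}$, invoke Lemma~\ref{lem: ratio of r^* with r} for the main contribution, and control the large-$\hat{r}_{\Lambda}$ tail with Lemma~\ref{lem: tail estimate}. The paper organizes the reduction by first splitting the spatial variable into $|x|\le 2N$ and $|x|>2N$ rather than upgrading (\ref{eq:transit from M to W*N}) to a variable radius, but the content is the same; your $\kappa\pm\delta/4$ sandwich is in fact a bit more honest than the paper's ``$\approx$'' shorthand. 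One caution: the event $\bigcap_{x}\Theta_{x}$ is an uncountable intersection and is not obviously $\mathcal{W}$-full---what you actually need (and what the paper uses) is that the identification $\chi_{D^{\Lambda,\theta}}(x)=\chi\{m^{\theta}(\overline{B_{\rho(x)}(x)})\le\Lambda\}$ holds $\mathcal{M}$-a.e., which follows once the change-of-measure Lemma~\ref{lem:area dist under W*N} is extended as you outline, since for each fixed $x$ one has $\mathcal{W}(\Theta_{x})=1$ and hence $\mathcal{N}(\{(x,\theta):\theta\notin\Theta_{x}\})=0$.
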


\section{Proofs of Results in Section 4}

We will now prove Lemma \ref{lem:area dist under W*N}. The strategy
is to relate $m^{\theta}\left(dx\right)$ to the approximating measures
$m_{\epsilon_{n}}^{\theta}\left(dx\right)$ and recognize that, by
the Cameron-Martin formula, the density of $m_{\epsilon_{n}}^{\theta}\left(dx\right)$,
i.e., $E_{\epsilon_{n}}^{\theta}\left(x\right)=\exp\left(\gamma\mathcal{I}\left(h_{\mu_{\epsilon_{n}}^{x}}\right)\left(\theta\right)-\frac{\gamma^{2}}{2}G\left(\epsilon_{n}\right)\right)$,
is just the Radon-Nikodym derivative with respect to $\mathcal{W}$
of the Gaussian measure induced by the translation $\theta\mapsto\theta+\gamma h_{\mu_{\epsilon_{n}}^{x}}$
under $\mathcal{W}$. Note that the constraint $0<\gamma^{2}<2\pi^{2}$
becomes necessary in this proof. \\
\emph{}\\
\emph{Proof of Lemma \ref{lem:area dist under W*N}:} Let $\hat{m}^{\theta,x}\left(dy\right)$
be the measure as defined in (\ref{eq:area dist under W*N}). The
claim that $\hat{m}^{\theta,x}\left(dy\right)$ is non-negative regular
and $\sigma-$finite follows from the observation that $\exp\left(\frac{\gamma^{2}}{2\pi^{2}}K_{0}\left(\left|x-\cdot\right|\right)\right)$
is locally integrable with respect to $m^{\theta}\left(dx\right)$
if $\theta\in\Theta_{x}$. Without loss of generality, we will assume
$x=0$. The only possible problem comes from the singularity at $0$.
However, if we rewrite
\[
\begin{split}\int_{B_{\epsilon_{0}}\left(0\right)}e^{\frac{\gamma^{2}}{2\pi^{2}}K_{0}\left(\left|y\right|\right)}m^{\theta}\left(dy\right) & =\sum_{k=0}^{\infty}\int_{\epsilon_{k}\leq\left|y\right|<\epsilon_{k-1}}e^{\frac{\gamma^{2}}{2\pi^{2}}K_{0}\left(\left|y\right|\right)}m^{\theta}\left(dy\right)\\
 & \leq\sum_{k=0}^{\infty}e^{\frac{\gamma^{2}}{2\pi^{2}}K_{0}\left(\epsilon_{k}\right)}m^{\theta}\left(B_{\epsilon_{k-1}}\left(0\right)\right),
\end{split}
\]
then the criterion (\ref{eq:def of Theta_x}) for $\Theta_{x}$ guarantees
that the series in the right hand side of above is convergent. 

Now we move on to the second part of the lemma. Clearly both mappings
\[
\left(x,\theta\right)\mapsto F\left(x,m^{\theta}\left(B_{r}\left(x\right)\right)\right)\mbox{ and }\left(x,\theta\right)\mapsto F\left(x,\hat{m}^{\theta,x}\left(B_{r}\left(x\right)\right)\right)
\]
are measurable with respect to $\mathfrak{B}_{\mathbb{R}^{4}}\times\mathfrak{B}_{\Theta}$,
so the two integrals in (\ref{eq:transit from M to W*N}) are well
defined and in fact finite. Choose a continuous mapping $x\in\Gamma\mapsto\rho^{x}\in C_{0}\left(\mathbb{R}^{4}\right)$
with $0\leq\rho^{x}<\chi_{B_{r}\left(x\right)}$. We first show (\ref{eq:transit from M to W*N})
holds with $\chi_{B_{r}\left(x\right)}$ replaced by $\rho^{x}$.
Namely, we claim that 
\begin{equation}
\begin{split}\int_{\Theta}\int_{\Gamma}F\left(x,M^{\theta}\left(\rho^{x}\right)\right)m^{\theta}\left(dx\right)\mathcal{W}\left(d\theta\right)=\qquad\qquad\qquad\qquad\qquad\qquad\\
\int_{\Gamma}\int_{\Theta}F\left(x,M^{\theta}\left(\rho^{x}e^{\frac{\gamma^{2}}{2\pi^{2}}K_{0}\left(\left|x-\cdot\right|\right)}\right)\right)\mathcal{W}\left(d\theta\right)dx.
\end{split}
\label{eq:transit from M to W*N with function rho}
\end{equation}

We start with rewriting the left hand side of (\ref{eq:transit from M to W*N with function rho}).
Since $F\left(x,M^{\theta}\left(\rho^{x}\right)\right)$ is continuous
in $x\in\Gamma$, the weak convergence result implies that 
\[
\int_{\Gamma}F\left(x,M^{\theta}\left(\rho^{x}\right)\right)m^{\theta}\left(dx\right)=\lim_{n\rightarrow\infty}\int_{\Gamma}F\left(x,M^{\theta}\left(\rho^{x}\right)\right)m_{\epsilon_{n}}^{\theta}\left(dx\right),
\]
which, by the dominated convergence theorem, leads to 
\begin{equation}
\begin{split}\int_{\Theta}\int_{\Gamma}F\left(x,M^{\theta}\left(\rho^{x}\right)\right)m^{\theta}\left(dx\right)\mathcal{W}\left(d\theta\right)=\qquad\qquad\qquad\qquad\qquad\\
\lim_{n\rightarrow\infty}\int_{\Theta}\int_{\Gamma}F\left(x,M^{\theta}\left(\rho^{x}\right)\right)E_{\epsilon_{n}}^{\theta}\left(x\right)dx\mathcal{W}\left(d\theta\right)
\end{split}
.\label{eq:1st step in area under M}
\end{equation}
By Fubini's Theorem and the consideration (about viewing $E_{\epsilon_{n}}^{\theta}\left(x\right)$
as the Radon-Nikodym derivative of the translated Wiener measure)
we made before the proof, we have that the right hand side of (\ref{eq:1st step in area under M})
equals 
\[
\lim_{n\rightarrow\infty}\int_{\Gamma}\int_{\Theta}F\left(x,M^{\theta+\gamma h_{\mu_{\epsilon_{n}}^{x}}}\left(\rho^{x}\right)\right)\mathcal{W}\left(d\theta\right)dx.
\]
Now given $x\in\mathbb{R}^{4}$, the Cameron-Martin theorem guarantees
that also with probability 1, $m_{\epsilon_{k}}^{\theta+\gamma h_{\mu_{\epsilon_{n}}^{x}}}\left(dy\right)$
weakly converges to $m^{\theta+\gamma h_{\mu_{\epsilon_{n}}^{x}}}\left(dy\right)$
as $k\rightarrow\infty$ simultaneously for all $n\ge1$. In particular,
\[
\begin{split}M^{\theta+\gamma h_{\mu_{\epsilon_{n}}^{x}}}\left(\rho^{x}\right) & =\lim_{k\rightarrow\infty}M_{\epsilon_{k}}^{\theta+\gamma h_{\mu_{\epsilon_{n}}^{x}}}\left(\rho^{x}\right)\\
 & =\lim_{k\rightarrow\infty}\int_{\mathbb{R}^{4}}\rho^{x}\left(y\right)\exp\left(\gamma^{2}\mathbb{E}\left[\mathcal{I}\left(h_{\mu_{\epsilon_{k}}^{y}}\right)\mathcal{I}\left(h_{\mu_{\epsilon_{n}}^{x}}\right)\right]\right)E_{\epsilon_{k}}^{\theta}\left(y\right)dy.
\end{split}
\]
At this point, it is clear that (\ref{eq:transit from M to W*N with function rho})
would follow if we can show that for every $\theta\in\Theta_{x}$,
\begin{equation}
\begin{split}\lim_{n\rightarrow\infty}\lim_{k\rightarrow\infty}\int_{\mathbb{R}^{4}}\rho^{x}\left(y\right)\exp\left(\gamma^{2}\mathbb{E}\left[\mathcal{I}\left(h_{\mu_{\epsilon_{k}}^{y}}\right)\mathcal{I}\left(h_{\mu_{\epsilon_{n}}^{x}}\right)\right]\right)E_{\epsilon_{k}}^{\theta}\left(y\right)dy\quad\;\qquad\qquad\\
=\int_{\mathbb{R}^{4}}\rho^{x}\left(y\right)\exp\left(\frac{\gamma^{2}}{2\pi^{2}}K_{0}\left(\left|x-y\right|\right)\right)m^{\theta}\left(dy\right)<\infty.
\end{split}
\label{eq:limit of three-piece integral}
\end{equation}
The right hand side of (\ref{eq:limit of three-piece integral}) is
finite for $\theta\in\Theta_{x}$ as we have seen in the proof of
the first part of this lemma. To establish the equation in (\ref{eq:limit of three-piece integral}),
we assume $n\geq1$ is sufficiently large and $k\geq n$ and divide
the integral in the left hand side of (\ref{eq:limit of three-piece integral})
into three pieces: 
\begin{equation}
\begin{split}\left\{ \int_{\left|y-x\right|<\epsilon_{n}-\epsilon_{k}}+\int_{\epsilon_{n}-\epsilon_{k}\leq\left|y-x\right|\leq\epsilon_{n}+\epsilon_{k}}+\int_{\left|y-x\right|>\epsilon_{n}+\epsilon_{k}}\right\} \qquad\qquad\qquad\qquad\qquad\qquad\\
\rho^{x}\left(y\right)\exp\left\{ \gamma\mathcal{I}\left(h_{\mu_{\epsilon_{k}}^{y}}\right)\left(\theta\right)+\gamma^{2}\mathbb{E}\left[\mathcal{I}\left(h_{\mu_{\epsilon_{k}}^{y}}\right)\mathcal{I}\left(h_{\mu_{\epsilon_{n}}^{x}}\right)\right]-\frac{\gamma^{2}}{2}G\left(\epsilon_{k}\right)\right\} dy.
\end{split}
\label{eq:chop up integral}
\end{equation}
We will investigate the limit as $k\rightarrow\infty$ and then $n\rightarrow\infty$
of each piece separately. 

By (\ref{eq:cov nonoverlap}), the last integral in (\ref{eq:chop up integral})
equals 
\begin{equation}
\int_{\left|y-x\right|>\epsilon_{n}+\epsilon_{k}}\rho^{x}\left(y\right)e^{\frac{\gamma^{2}}{2\pi^{2}}K_{0}\left(\left|x-y\right|\right)}E_{\epsilon_{k}}^{\theta}\left(y\right)dy.\label{eq: transition 1st integral}
\end{equation}
If the domain in (\ref{eq: transition 1st integral}) is replaced
by $\left\{ y:\,\left|y-x\right|>\epsilon_{n}\right\} $, then the
integral would be 
\begin{equation}
\begin{split}\int_{\left|y-x\right|>\epsilon_{n}}\rho^{x}\left(y\right)e^{\frac{\gamma^{2}}{2\pi^{2}}K_{0}\left(\left|x-y\right|\right)}E_{\epsilon_{k}}^{\theta}\left(y\right)dy=\qquad\qquad\qquad\qquad\qquad\qquad\qquad\\
\qquad\qquad M_{\epsilon_{k}}^{\theta}\left(\rho^{x}e^{\frac{\gamma^{2}}{2\pi^{2}}K_{0}\left(\left|x-\cdot\right|\vee\epsilon_{n}\right)}\right)-e^{\frac{\gamma^{2}}{2\pi^{2}}K_{0}\left(\epsilon_{n}\right)}\int_{\left|x-y\right|\leq\epsilon_{n}}\rho^{x}\left(y\right)E_{\epsilon_{k}}^{\theta}\left(y\right)dy.
\end{split}
\label{eq:transition 1st integral prime}
\end{equation}
As $k\rightarrow\infty$ and then $n\rightarrow\infty$, the first
term in the right hand side of (\ref{eq:transition 1st integral prime})
converges to $\int_{\mathbb{R}^{4}}\rho^{x}\left(y\right)e^{\frac{\gamma^{2}}{2\pi^{2}}K_{0}\left(\left|x-y\right|\right)}m^{\theta}\left(dy\right)$
(which is the term we want and also the only term that should survive
in the limit). On the other hand, as $k\rightarrow\infty$, the second
term on the right hand side of (\ref{eq:transition 1st integral prime})
is bounded by $e^{\frac{\gamma^{2}}{2\pi^{2}}K_{0}\left(\epsilon_{n}\right)}m^{\theta}\left(\overline{B_{\epsilon_{n}}\left(x\right)}\right)$,
which, because $\theta\in\Theta_{x}$, converges to zero when $n\rightarrow\infty$.
As for the ``redundant annulus'' which is the difference between
the left hand side of (\ref{eq:transition 1st integral prime}) and
(\ref{eq: transition 1st integral}), it's bounded by $e^{\frac{\gamma^{2}}{2\pi^{2}}K_{0}\left(\epsilon_{n}\right)}$
times the integral of $E_{\epsilon_{k}}^{\theta}$ over the annulus
$\left\{ \epsilon_{n}<\left|x-\cdot\right|\leq\epsilon_{n}+\epsilon_{k}\right\} $.
One can apply the Schwarz inequality to see that this integral is
bounded by 
\begin{equation}
\begin{split}e^{\frac{\gamma^{2}}{2}G\left(\epsilon_{k}\right)}\mbox{\mbox{vol}}\left(\left\{ \epsilon_{n}<\left|x-\cdot\right|\leq\epsilon_{n}+\epsilon_{k}\right\} \right)^{\frac{1}{2}}\left(\overline{m^{\theta,2\gamma}}\left(2\Gamma\right)\right)^{\frac{1}{2}}\end{split}
\label{eq:missing stripe in 1st integral}
\end{equation}
where $2\Gamma\equiv\left\{ 2y:\, y\in\Gamma\right\} $. Given the
considerations at the end of Section 2, without loss of generality,
we can assume $\overline{m^{\theta,2\gamma}}\left(2\Gamma\right)$
is finite and hence the limit of (\ref{eq:missing stripe in 1st integral})
as $k\rightarrow\infty$ with $n$ fixed is zero (because the volume
of the annulus $\approx\epsilon_{k}\approx e^{-2\pi^{2}G\left(\epsilon_{k}\right)}$
and $2\pi^{2}>\gamma^{2}$), so this ``annulus'' is negligible.

The second integral in (\ref{eq:chop up integral}) becomes negligible
by a similar argument. This time the Schwarz inequality implies that
the second integral is bounded by
\[
e^{\gamma^{2}\sqrt{G\left(\epsilon_{k}\right)G\left(\epsilon_{n}\right)}}e^{\frac{\gamma^{2}}{2}G\left(\epsilon_{k}\right)}\mbox{vol}^{\frac{1}{2}}\left(\left\{ \epsilon_{n}-\epsilon_{k}<\left|x-\cdot\right|<\epsilon_{n}+\epsilon_{k}\right\} \right)\left(\overline{m^{\theta,2\gamma}}\left(2\Gamma\right)\right)^{\frac{1}{2}}
\]
where we use the simple estimate $\mathbb{E}\left[\left|\mathcal{I}\left(h_{\mu_{\epsilon_{k}}^{y}}\right)\mathcal{I}\left(h_{\mu_{\epsilon_{n}}^{x}}\right)\right|\right]\leq\sqrt{G\left(\epsilon_{k}\right)G\left(\epsilon_{n}\right)}$.
Again, with $n$ fixed, the factor that involves $k$ converges to
zero. 

As for the first integral, because of (\ref{eq:cov inclusion}) and
the asymptotics of the Bessel functions involved, $\mathbb{E}\left[\mathcal{I}\left(h_{\mu_{\epsilon_{k}}^{y}}\right)\mathcal{I}\left(h_{\mu_{\epsilon_{n}}^{x}}\right)\right]$
is bounded by $\eta G\left(\epsilon_{n}\right)$ for some constant
$\eta\in\left(1,2\right)$ for all sufficiently large $n$. Therefore,
with $n$ fixed, the integral as $k\rightarrow\infty$ is bounded
by $e^{\eta\gamma^{2}G\left(\epsilon_{n}\right)}m^{\theta}\left(\overline{B_{\epsilon_{n}}\left(x\right)}\right)$,
and as $n\rightarrow\infty$ it therefore converges to zero.

So far we have proved the claim (\ref{eq:transit from M to W*N with function rho}).
To reach (\ref{eq:transit from M to W*N}), one takes a sequence $\left\{ \rho_{l}^{x}:\, l\geq1\right\} \subseteq C_{c}^{\infty}\left(\mathbb{R}^{4}\right)$
such that $0\leq\rho_{l}^{x}\nearrow\chi_{B_{r}\left(x\right)}$ as
$l\rightarrow\infty$, and for every $l\geq1$, $x\in\mathbb{R}^{4}\mapsto\rho_{l}^{x}\in C_{0}\left(\mathbb{R}^{4}\right)$
is continuous. So (\ref{eq:transit from M to W*N with function rho})
holds for each $l\geq1$. After carefully examining the integrals
on both sides of (\ref{eq:transit from M to W*N with function rho}),
one realizes that the limit as $l\rightarrow\infty$ can be passed
all the way inside to produce (\ref{eq:transit from M to W*N}). At
the end, it's clear that given $x$, the distribution of $\hat{m}^{\theta,x}\left(B_{r}\left(x\right)\right)$
under $\mathcal{W}$ is independent of $x$ due to the translation
invariance of measure $m^{\theta}\left(dy\right)$. Hence we have
completed the proof to Lemma \ref{lem:area dist under W*N}. $\square$ 

Now we move on to the proofs of the KPZ results. The techniques we
adopt here differ from those used in the two dimensional proofs in
\cite{DS1}, partly because of the absence in our setting of the two
dimensional conformal structure as well as the compactness of the
domain. For example, the next lemma is the ``tail estimate'' which
is the key estimate in proving both Lemma \ref{lem: ratio of r^* with r}
and Theorem \ref{thm:KPZ}. In the two dimensional counterpart, the
corresponding estimate (\cite{DS1}, §4.3) is a super-exponential
type of estimate. Below we prove an exponential type of estimate in
the four dimensional setting, but by carefully ``tuning'' the exponential
decay rate, we can still make it sufficient for our purposes. Again,
the occurrence of $\hat{m}^{\theta}\left(dy\right)$ in the next lemma
refers to the measure $e^{\frac{\gamma^{2}}{2\pi^{2}}K_{0}\left(\left|y\right|\right)}m^{\theta}\left(dy\right)$
assuming $\theta\in\Theta_{0}$. In other words, only balls centered
at the origin are concerned. However, since the distribution of $\hat{m}^{\theta,x}\left(B_{r}\left(x\right)\right)$
under $\mathcal{W}$ does not depend on $x$, the same result will
hold for $\hat{m}^{\theta,x}\left(dy\right)=e^{\frac{\gamma^{2}}{2\pi^{2}}K_{0}\left(\left|x-y\right|\right)}m^{\theta}\left(dy\right)$
(assuming $\theta\in\Theta_{x}$) no matter what $x$ is.\foreignlanguage{english}{ }
\selectlanguage{english}%
\begin{lem}
\label{lem: tail estimate}Let $B$ be the closed ball in $\mathbb{R}^{4}$
centered at the origin with unit volume under $e^{\frac{\gamma^{2}}{2\pi^{2}}K_{0}\left(\left|y\right|\right)}dy$,i.e.,
$\int_{B}e^{\frac{\gamma^{2}}{2\pi^{2}}K_{0}\left(\left|y\right|\right)}dy=1$.
If $\delta$ and $\rho$ are constants satisfying 
\[
0<\delta<4\pi^{2}-2\gamma^{2}\mbox{ and }\frac{4\pi^{2}+\gamma^{2}}{8\pi^{2}-\gamma^{2}-\delta}<\rho<1,
\]
then there exists $C>0$ such that%
\footnote{Throughout this section, $C$ denotes a constant that may depend on
$\gamma$, $\delta$, $\rho$ and $R$, but universal in $A$, $\epsilon_{n}$,
$x$ and $\Lambda$. The values of $C$ may change from line to line.%
} for all sufficiently large $A>0$, 
\begin{equation}
\mathcal{W}\left(\hat{m}^{\theta}\left(B\right)\leq e^{-A\gamma}\right)\leq C\exp\left[-\frac{2\rho}{\gamma}\left(8\pi^{2}-\gamma^{2}-\frac{\gamma^{2}}{\rho}-\delta\right)A\right].\label{eq:tail estimate}
\end{equation}
\end{lem}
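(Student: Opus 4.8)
The plan is to bound $\hat{m}^{\theta}(B)$ from below by the mass it assigns to nested spherical shells around the origin, to use the reversed-Markov structure of Theorem~\ref{thm:mu_epsilon} to express each shell mass as $e^{\gamma X}$ times an essentially independent rescaled copy of the same kind of random variable, and then to combine a quantitative lower-tail (finite negative moment) bound for a single shell mass --- whose base case is a second-moment estimate --- with a large-deviation estimate for the Brownian motion $\{X_{t}\}$ of (\ref{eq:def of X_t}). The two free constants will set the rate: $\delta$ absorbs the gap between $G(r),K_{0}(r)$ and their near-origin logarithmic asymptotics recorded in Section~2, while $\rho$ absorbs the loss in the negative moment one can afford (equivalently a Paley--Zygmund loss).

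Concretely, take $R$ equal to the radius of $B$, so $B=\overline{B_{R}(0)}$, and set $r(t)\equiv G^{-1}(t+G(R))$ for $t\ge0$, so that $\overline{B_{r(t)}(0)}\subseteq B$ and, by the discussion following Theorem~\ref{thm:mu_epsilon}, $X_{t}\equiv\mathcal{I}(h_{\mu_{r(t)}^{0}})-\mathcal{I}(h_{\mu_{R}^{0}})$ is a standard Brownian motion. Fix a moderate spacing $\tau>0$, put $t_{n}\equiv n\tau$, and let $A_{n}\equiv B_{r(t_{n-1})}(0)\setminus\overline{B_{r(t_{n})}(0)}$. The shells $A_{n}$ are disjoint subsets of $B$, so $\hat{m}^{\theta}(B)\ge\hat{m}^{\theta}(A_{n})$ for every $n$, whence for every $N$,
\[
\mathcal{W}\bigl(\hat{m}^{\theta}(B)\le e^{-A\gamma}\bigr)\ \le\ \mathcal{W}\Bigl(\hat{m}^{\theta}(A_{n})\le e^{-A\gamma}\ \text{for all }1\le n\le N\Bigr),
\]
and I would take $N$ of order $A$.

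For each $n$, conditioning on $\mathcal{G}_{n-1}\equiv\sigma\{\mathcal{I}(h_{\mu_{\eta}^{0}}):\eta\ge r(t_{n-1})\}$ and invoking the increment decomposition after Theorem~\ref{thm:mu_epsilon} (equivalently the reversed-Markov property of Lemma~\ref{lem:on V_t(x)} together with the inclusion-case covariance (\ref{eq:cov inclusion})), the part of the field relevant to $A_{n}$ splits, conditionally on $\mathcal{G}_{n-1}$, into the known value $X_{t_{n-1}}$ plus an independent piece; hence $\hat{m}^{\theta}(A_{n})$ has, given $\mathcal{G}_{n-1}$, the law of $W\exp\!\bigl(\gamma X_{t_{n-1}}-\tfrac{\gamma^{2}}{2}t_{n-1}\bigr)Z_{n}$, where $W>0$ is a fixed $\mathcal{G}_{0}$-measurable random constant (hence harmless for large $A$) and the $Z_{n}$ are nonnegative, independent of $\mathcal{G}_{n-1}$, and equal a deterministic factor of order $e^{-(8\pi^{2}-\gamma^{2})t_{n-1}}$ --- the $e^{\frac{\gamma^{2}}{2\pi^{2}}K_{0}(|\cdot|)}\asymp|\cdot|^{-\gamma^{2}/2\pi^{2}}$-weighted $r^{4}$ volume of $A_{n}$, computed from the near-origin asymptotics of $G$ and $K_{0}$ --- times a renormalized variable whose law does not depend on $n$ and whose mean is of unit order, so that $\mathbb{E}^{\mathcal{W}}[\hat{m}^{\theta}(A_{n})\mid\mathcal{G}_{n-1}]\asymp W\exp\!\bigl(\gamma X_{t_{n-1}}-(8\pi^{2}-\tfrac{\gamma^{2}}{2})t_{n-1}\bigr)$, in agreement with (\ref{eq:cond exp of area in terms of X_t}). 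The crucial input is that these renormalized variables have a uniformly bounded \emph{second} moment --- and, bootstrapping the same shell decomposition inside a single shell, finite negative moments $\mathbb{E}^{\mathcal{W}}[(\,\cdot\,)^{-s}]<\infty$ for $s$ in some range $(0,s_{*})$: the second-moment bound is precisely the computation of Section~3 run with $2\gamma$ in place of $\gamma$, which the remark at the end of Section~2 supplies under $0<\gamma^{2}<2\pi^{2}$, and this is exactly where that hypothesis is needed. Finally, by spacing the $t_{n}$ and iterating the Markov property, a fixed positive fraction of the $Z_{n}$ can be taken conditionally independent given the Brownian path.

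The event $\{\hat{m}^{\theta}(B)\le e^{-A\gamma}\}$ then forces $Z_{n}\le e^{-\gamma(A+X_{t_{n-1}})}/W$ for every $n\le N$. On a typical Brownian path ($X_{t}$ of size $O(\sqrt{t})$), at the coarser scales $t_{n-1}=O(1)$ this threshold is far below $\mathbb{E}^{\mathcal{W}}[Z_{n}]$, so Markov's inequality applied to a fixed negative moment $Z_{n}^{-s}$, with the conditionally independent sub-family multiplying the decays, contributes a factor exponentially small in $A$; the complementary contribution, in which the Brownian path undershoots the descending line $t\mapsto-A+\gamma^{-1}(8\pi^{2}-\gamma^{2})t$ over the whole initial window, is a Gaussian large-deviation event, also exponentially small in $A$. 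Balancing these two exponential costs through the choice of $\tau$, of the window length $N$, and of the moment $s<s_{*}$ yields (\ref{eq:tail estimate}); the admissible rate comes out to be $\tfrac{2\rho}{\gamma}\bigl(8\pi^{2}-\gamma^{2}-\tfrac{\gamma^{2}}{\rho}-\delta\bigr)$ once the logarithmic asymptotic errors are gathered into $\delta$ and the shortfall in $s_{*}$ into $\rho$, the term $\gamma^{2}/\rho$ tracing back to the Gaussian cost $e^{s^{2}\gamma^{2}t/2}$ one pays for negative moments along the drifting line. I expect the two delicate points to be: (i) making precise the statement that a single shell mass is, up to the $\mathcal{G}_{0}$-measurable prefactor, $e^{\gamma X}$ times an independent rescaled copy of a shell mass, and the extraction of a conditionally independent subsequence --- the four-dimensional substitute for peeling off the harmonic part of the GFF on each sphere, with the boundary corrections controlled through (\ref{eq:cov inclusion})--(\ref{eq:cov nonoverlap}); and (ii) the bookkeeping that balances the two competing exponential costs so as to land exactly on the stated rate rather than a cruder one.
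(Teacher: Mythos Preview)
Your plan is a multiscale shell decomposition in the spirit of \cite{DS1}: bound $\hat m^{\theta}(B)$ below by shell masses, factor each shell mass as $e^{\gamma X_{t_{n-1}}}$ times an (approximately) independent renormalized variable, and balance a Paley--Zygmund/negative-moment input against a Brownian large-deviation cost. The paper takes a completely different, and much shorter, route. It works at the level of the approximating measures $\hat m_{\epsilon_n}^{\theta}(B)$ and splits according to a threshold $N$ with $G(\epsilon_N)\approx 2\rho A/\gamma$. For $n>N$ it uses only the $L^{2}$ increment bound $\mathbb E^{\mathcal W}[|\hat m_{\epsilon_{n+1}}^{\theta}(B)-\hat m_{\epsilon_n}^{\theta}(B)|^{2}]\le Ce^{-(8\pi^{2}-\gamma^{2})G(\epsilon_n)}$ (Chebyshev plus Borel--Cantelli); this produces exactly the exponential rate in (\ref{eq:tail estimate}) and is where $\delta$ and $\rho$ enter. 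For $n\le N$ the key step is \emph{Jensen's inequality}: since $\int_{B}e^{\frac{\gamma^{2}}{2\pi^{2}}K_{0}(|y|)}dy=1$, one has $\hat m_{\epsilon_n}^{\theta}(B)\ge\exp\bigl(\gamma\int_{B}\mathcal I(h_{\mu_{\epsilon_n}^{y}})(\theta)\,e^{\frac{\gamma^{2}}{2\pi^{2}}K_{0}(|y|)}dy-\tfrac{\gamma^{2}}{2}G(\epsilon_n)\bigr)$, and the integral is a centered Gaussian with variance bounded \emph{uniformly in $n$} (by $\iint K_{0}(|x-y|)e^{\frac{\gamma^{2}}{2\pi^{2}}(K_{0}(|x|)+K_{0}(|y|))}dxdy$). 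This reduces $\{\hat m_{\epsilon_n}^{\theta}(B)\le c\,e^{-A\gamma}\}$ to a one-dimensional Gaussian tail, which is super-exponential in $A$ on the range $n\le N$ and hence negligible. No independence, no negative moments, no shell recursion.

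Your sketch has two genuine gaps. First, the factorization ``$\hat m^{\theta}(A_n)\stackrel{d}{=}W\,e^{\gamma X_{t_{n-1}}-\gamma^{2}t_{n-1}/2}\,Z_n$ with $Z_n$ independent of $\mathcal G_{n-1}$ and of a law not depending on $n$'' is not available here in the clean 2D form: formula (\ref{eq:cov inclusion}) shows that the conditional coupling of $\mathcal I(h_{\mu_{\epsilon}^{y}})$ to the boundary data depends on $|y|$ through $I_{0}(|y|)$ and $I_{2}(|y|)$, not just through $X_{t_{n-1}}$, so there is no exact scale invariance and the ``independent rescaled copy'' picture holds only asymptotically with error terms you have not controlled. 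The paper explicitly flags that it departs from the \cite{DS1} techniques for this reason. Second, your input ``finite negative moments $\mathbb E[(Z_n)^{-s}]<\infty$, obtained by bootstrapping the same shell decomposition'' is, as stated, circular: a second-moment bound yields Paley--Zygmund (a uniform lower bound on $\mathcal W(Z_n\ge c)$), not a polynomial lower-tail bound, and promoting this to negative moments is essentially the content of the lemma you are trying to prove. Even if one runs a Paley--Zygmund-plus-independence argument instead, you still need a subfamily of shells whose masses are genuinely conditionally independent given the radial process, which in this 4D model (no harmonic decomposition) you have not produced. Finally, the identification of the exact rate $\tfrac{2\rho}{\gamma}(8\pi^{2}-\gamma^{2}-\gamma^{2}/\rho-\delta)$ from your balancing of costs is asserted rather than derived; in the paper that constant falls out directly from the threshold choice $G(\epsilon_N)\approx 2\rho A/\gamma$ combined with the $L^{2}$ increment exponent $8\pi^{2}-\gamma^{2}$.
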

\begin{proof}
Since $B$ is closed, it suffices to estimate $\mathcal{W}\left(\limsup_{n\rightarrow\infty}\hat{m}_{\epsilon_{n}}^{\theta}\left(B\right)\leq e^{-A\gamma}\right)$
where $\hat{m}_{\epsilon_{n}}^{\theta}\left(dy\right)$ has density
$e^{\frac{\gamma^{2}}{2\pi^{2}}K_{0}\left(\left|y\right|\right)}$
with respect to $m_{\epsilon_{n}}^{\theta}\left(dy\right)$. By the
same argument as used in deriving the estimate (\ref{eq:total mass estimate of 2nd moment of difference}),
we can show that there exists constant $C>0$ such that for all $n\geq1$,
\[
\mathbb{E}^{\mathcal{W}}\left[\left|\hat{m}_{\epsilon_{n+1}}^{\theta}\left(B\right)-\hat{m}_{\epsilon_{n}}^{\theta}\left(B\right)\right|^{2}\right]\leq Ce^{-\left(8\pi^{2}-\gamma^{2}\right)G\left(\epsilon_{n}\right)}.
\]
\foreignlanguage{american}{For any $\delta$ with $0<\delta<4\pi^{2}-2\gamma^{2}$},
denote $\mathcal{A}_{n}^{\prime}$, $n\geq1$, the measurable set
\[
\left\{ \forall l\geq n,\left|\hat{m}_{\epsilon_{l+1}}^{\theta}\left(B\right)-\hat{m}_{\epsilon_{l}}^{\theta}\left(B\right)\right|\leq e^{-A\gamma}e^{-\frac{\delta}{2}G\left(\epsilon_{l}\right)}\right\} .
\]
Then it follows easily from \foreignlanguage{american}{Chebyshev's
inequality and the Borel-Cantelli Lemma that} $\mathcal{W}\left(\bigcup_{n=1}^{\infty}\mathcal{A}_{n}^{\prime}\right)=1$.
Moreover, if $\mathcal{A}_{1}=\mathcal{A}_{1}^{\prime}$ and $\mathcal{A}_{n}=\mathcal{A}_{n}^{\prime}\backslash\mathcal{A}_{n-1}^{\prime}$
for $n\ge2$, then there exists constant $C>0$ such that for all
$n\geq2$, 
\begin{equation}
\mathcal{W}\left(\mathcal{A}_{n}\right)\leq Ce^{2A\gamma}e^{-\left(8\pi^{2}-\gamma^{2}-\delta\right)G\left(\epsilon_{n}\right)}.\label{eq:esti for prob(A)}
\end{equation}

Set $\mathcal{B}\equiv\left\{ \limsup_{n\rightarrow\infty}\hat{m}_{\epsilon_{n}}^{\theta}\left(B\right)\leq e^{-A\gamma}\right\} $,
then $\mathcal{W}\left(\mathcal{B}\right)=\sum_{n=1}^{\infty}\mathcal{W}\left(\mathcal{B}\cap\mathcal{A}_{n}\right)$
and it's clear that $\theta\in\mathcal{B}\cap\mathcal{A}_{n}$ implies
$\hat{m}_{\epsilon_{n}}^{\theta}\left(B\right)\leq c_{\delta}e^{-A\gamma}$
where $c_{\delta}=1+\sum_{n=1}^{\infty}e^{-\frac{\delta}{2}G\left(\epsilon_{n}\right)}$.
Given any $\rho$ such that $\frac{4\pi^{2}+\gamma^{2}}{8\pi^{2}-\gamma^{2}-\delta}<\rho<1$
(notice that such $\rho$ always exists since $0<\delta<4\pi^{2}-2\gamma^{2}$
and $4\pi^{2}+\gamma^{2}<8\pi^{2}-\gamma^{2}-\delta$), we set up
the ``threshold'' $N\in\mathbb{N}$ which is the unique (recall
that $G$ is strictly decreasing on $\left(0,\infty\right)$) integer
such that 
\begin{equation}
G\left(\epsilon_{N}\right)<\frac{2\rho A}{\gamma}\mbox{ but }G\left(\epsilon_{N+1}\right)\geq\frac{2\rho A}{\gamma}.\label{eq:def for thresholds}
\end{equation}
\foreignlanguage{american}{The desired estimate (\ref{eq:tail estimate})
is trivial when $n\geq N+1$, because (\ref{eq:esti for prob(A)})
and (\ref{eq:def for thresholds}) implies 
\begin{eqnarray*}
\sum_{n=N+1}^{\infty}\mathcal{W}\left(\mathcal{B}\cap\mathcal{A}_{n}\right) & \leq & Ce^{2A\gamma}e^{-\left(8\pi^{2}-\gamma^{2}-\delta\right)G\left(\epsilon_{N+1}\right)}\\
 & \leq & C\exp\left[-\frac{2\rho}{\gamma}\left(8\pi^{2}-\gamma^{2}-\frac{\gamma^{2}}{\rho}-\delta\right)A\right].
\end{eqnarray*}
}When $n=1,\cdots,N$, we apply Jensen's inequality to see that
\begin{equation}
\begin{split} & \mathcal{W}\left(\hat{m}_{\epsilon_{n}}^{\theta}\left(B\right)\leq c_{\delta}e^{-A\gamma}\right)\\
\leq\; & \mathcal{W}\left(\exp\left[\int_{B}\left(\gamma\mathcal{I}\left(h_{\mu_{\epsilon_{n}}^{y}}\right)\left(\theta\right)-\frac{\gamma^{2}}{2}G\left(\epsilon_{n}\right)\right)e^{\frac{\gamma^{2}}{2\pi^{2}}K_{0}\left(\left|y\right|\right)}dy\right]\leq c_{\delta}e^{-A\gamma}\right)\\
\leq\; & \mathcal{W}\left(\int_{B}\mathcal{I}\left(h_{\mu_{\epsilon_{n}}^{y}}\right)\left(\theta\right)e^{\frac{\gamma^{2}}{2\pi^{2}}K_{0}\left(\left|y\right|\right)}dy\leq-A+\frac{\gamma}{2}G\left(\epsilon_{n}\right)+\frac{\log c_{\delta}}{\gamma}\right).
\end{split}
\label{eq:esti with Jensen}
\end{equation}
By Corollary \ref{cor:a.e. continuity in x}, without loss of generality,
we can assume that for all $n\geq1$ and every $\theta$, the function
$y\in B\mapsto\mathcal{I}\left(h_{\mu_{\epsilon_{n}}^{y}}\right)\left(\theta\right)$
is continuous and hence uniformly continuous on $B$. Therefore, one
can easily check (for example, by writing the integral as the limit
of a discrete sum of Gaussian random variables) that 
\[
\theta\in\Theta\mapsto\int_{B}\mathcal{I}\left(h_{\mu_{\epsilon_{n}}^{y}}\right)\left(\theta\right)e^{\frac{\gamma^{2}}{2\pi^{2}}K_{0}\left(\left|y\right|\right)}dy\in\mathbb{R}
\]
is also a centered Gaussian random variable for every $n\geq1$, and
furthermore, the variance can be bounded by a constant $M$ that is
universal in $n\geq1$. In fact, $M$ can be taken as a constant multiple
of 
\[
\iint_{B\times B}K_{0}\left(\left|x-y\right|\right)e^{\frac{\gamma^{2}}{2\pi^{2}}\left(K_{0}\left(\left|x\right|\right)+K_{0}\left(\left|y\right|\right)\right)}dxdy.
\]
 Since $G\left(\epsilon_{n}\right)<\frac{2\rho A}{\gamma}$ for $n=1,\cdots,N$,
$A-\frac{\gamma}{2}G\left(\epsilon_{n}\right)>(1-\rho)A$, and (\ref{eq:esti with Jensen})
implies that when $A$ is sufficiently large, 
\[
\begin{split}\mathcal{W}\left(\hat{m}_{\epsilon_{n}}^{\theta}\left(B\right)\leq c_{\delta}e^{-A\gamma}\right) & \leq\exp\left[-\frac{1}{2M}\left(A-\frac{\gamma}{2}G\left(\epsilon_{n}\right)-\frac{1}{\gamma}\log c_{\delta}\right)^{2}\right]\\
 & \leq\exp\left[-\frac{1}{2M}\left(\left(1-\rho\right)A-\frac{1}{\gamma}\log c_{\delta}\right)^{2}\right]\\
 & \leq\exp\left[-\frac{1}{4M}\left(1-\rho\right)^{2}A^{2}\right].
\end{split}
\]
In addition, (\ref{eq:def for thresholds}) implies that $N$ is approximately
a constant multiple of $A$. Therefore, when $A$ is large, 
\[
\sum_{n=1}^{N}\mathcal{W}\left(\mathcal{B}\cap\mathcal{A}_{n}\right)\leq\sum_{n=1}^{N}\mathcal{W}\left(\hat{m}_{\epsilon_{n}}^{\theta}\left(B\right)\leq c_{\delta}e^{-A\gamma}\right)\leq CAe^{-\frac{\left(1-\rho\right)^{2}A^{2}}{4M}}.
\]
So $\sum_{n=1}^{N}\mathcal{W}\left(\mathcal{B}\cap\mathcal{A}_{n}\right)$
actually decays super-exponentially fast as $A\rightarrow\infty$,
and this estimate can certainly be transformed into the desired form
as in (\ref{eq:tail estimate}).\foreignlanguage{american}{ }
\end{proof}
\selectlanguage{american}%
We are now ready to prove Lemma \ref{lem: ratio of r^* with r}. Recall
the notation $\hat{r}_{\Lambda}:\theta\mapsto\hat{r}_{\Lambda}\left(0,\theta\right)$
where $\hat{r}_{\Lambda}\left(0,\theta\right)$ is as defined in (\ref{eq:def of r^under W*N})
with $x$ being the origin. Let $\left(\kappa,K\right)\in\left[0,1\right]^{2}$
be a pair as in (\ref{eq:KPZ relation for m^*}). In order to get
(\ref{eq:ratio of r to r^*}), it suffices to show that 
\begin{equation}
C^{-1}\leq\Lambda^{-K}\mathbb{E}^{\mathcal{W}}\left[\left(\hat{r}_{\Lambda}\right)^{4\kappa}\right]\leq C\label{eq:upper and lower bound of log ratio}
\end{equation}
for some constant $C>0$ universal in $\Lambda$ as $\Lambda\downarrow0$.
We will prove the existence of the upper bound and the lower bound
in (\ref{eq:upper and lower bound of log ratio}) separately.\\
\\
\emph{Proof of the upper bound in (\ref{eq:upper and lower bound of log ratio}):}
For notational convenience, we introduce the ``stopping time'' corresponding
to $\hat{r}_{\Lambda}$, i.e., $T_{\Lambda}\equiv G\left(\hat{r}_{\Lambda}\right)-G\left(R\right)$.
We want to show $\Lambda^{-K}\mathbb{E}^{\mathcal{W}}\left[\exp\left(-8\pi^{2}\kappa T_{\Lambda}\right)\right]$
is bounded from above uniformly in small $\Lambda$. It's clear, from
(\ref{eq:tail estimate}) and the fact that $\mathcal{W}\left(\Theta_{0}\right)=1$,
where $\Theta_{0}$ is as in (\ref{eq:def of Theta_x}), that $T_{\Lambda}\in\left(-G\left(R\right),\infty\right)$
almost surely. Let constant $\delta$ and $\rho$ be as in the statement
of Lemma \ref{lem: tail estimate}. Set 
\[
S\equiv\frac{-\log\Lambda}{8\pi^{2}-\gamma^{2}}\frac{2\rho\left(8\pi^{2}-\gamma^{2}-\frac{\gamma^{2}}{\rho}-\delta\right)-K\gamma^{2}}{2\rho\left(8\pi^{2}-\gamma^{2}-\frac{\gamma^{2}}{\rho}-\delta\right)}.
\]
Then the expectation of $\exp\left(-8\pi^{2}\kappa T_{\Lambda}\right)$
can be written as 
\[
\mathbb{E}^{\mathcal{W}}\left[\exp\left(-8\pi^{2}\kappa T_{\Lambda}\right)\chi_{\left\{ -G\left(R\right)<T_{\Lambda}<S\right\} }\right]+\mathbb{E}^{\mathcal{W}}\left[\exp\left(-8\pi^{2}\kappa T_{\Lambda}\right)\chi_{\left\{ S\leq T_{\Lambda}<\infty\right\} }\right].
\]
In the first term, $T_{\Lambda}<S$ implies that the volume of the
closed ball centered at the origin with radius $r\left(S\right)=G^{-1}\left(S+G\left(R\right)\right)$
( $\approx\exp\left(-2\pi^{2}S\right)$) is no greater than $\Lambda$
under the measure $\hat{m}^{\theta}\left(dy\right)$, while this ball
has volume $\approx\left(r\left(S\right)\right)^{4-\frac{\gamma^{2}}{2\pi^{2}}}$
($\approx\exp\left(-\left(8\pi^{2}-\gamma^{2}\right)S\right))$ under
the measure $e^{\frac{\gamma^{2}}{2\pi^{2}}K_{0}\left(\left|y\right|\right)}dy.$
However, by Lemma \ref{lem: tail estimate}, the probability of this
event is bounded by 
\begin{eqnarray*}
 &  & C\exp\left(-\frac{2\rho}{\gamma}\left(8\pi^{2}-\gamma^{2}-\frac{\gamma^{2}}{\rho}-\delta\right)\left(\frac{-\log\Lambda}{\gamma}-\frac{8\pi^{2}-\gamma^{2}}{\gamma}S\right)\right),
\end{eqnarray*}
which, given this particular choice of $S$, is equal to a constant
multiple of $\Lambda^{K}.$ Therefore, the first piece of integral
causes no trouble.

The second integral is bounded by $e^{-8\pi^{2}\kappa S}$. Hence
we only need to check that $\Lambda^{-K}e^{-8\pi^{2}\kappa S}$, or
equivalently, $\exp\left(-K\log\Lambda-8\pi^{2}\kappa S\right)$ stays
bounded as $\Lambda\downarrow0$. In fact, we will show that for all
possible values of $\left(\kappa,K\right)$ and all sufficiently small
$\Lambda>0$, $K\log\Lambda+8\pi^{2}\kappa S\geq0$, that is (assuming
$\log\Lambda<0$), 
\begin{equation}
K\leq\frac{8\pi^{2}\kappa}{8\pi^{2}-\gamma^{2}}\frac{2\rho\left(8\pi^{2}-\gamma^{2}-\frac{\gamma^{2}}{\rho}-\delta\right)-K\gamma^{2}}{2\rho\left(8\pi^{2}-\gamma^{2}-\frac{\gamma^{2}}{\rho}-\delta\right)}.\label{eq:sign determination on K and kappa}
\end{equation}
To simplify the notations, let's write $\zeta\equiv2\rho\left(8\pi^{2}-\gamma^{2}-\frac{\gamma^{2}}{\rho}-\delta\right)$.
Recall from the statement of Lemma \ref{lem: tail estimate} that
$0<\delta<4\pi^{2}-2\gamma^{2}$ and $\frac{4\pi^{2}+\gamma^{2}}{8\pi^{2}-\gamma^{2}-\delta}<\rho<1$,
so $\zeta>8\pi^{2}$. If we express $\kappa$ in terms of $K$ according
to (\ref{eq:KPZ relation for m^*}), then the statement in (\ref{eq:sign determination on K and kappa})
is equivalent to
\[
F\left(K\right)\equiv\gamma^{2}K^{2}+\left(16\pi^{2}-\gamma^{2}-\zeta\right)K-\zeta\leq0
\]
for all possible values of $K\in\left[0,1\right]$. However, this
is clearly true since $F$ is quadratic and $F\left(0\right)=-\zeta<0$
as well as $F\left(1\right)=16\pi^{2}-2\zeta<0$. $\square$\\
\\
\emph{Proof of the lower bound in (\ref{eq:upper and lower bound of log ratio}):}
Recall that $T_{\Lambda}^{*}$ is the stopping time (as defined in
(\ref{eq:def of T^*})) associated with the ``approximating'' measure
$\hat{m}^{\theta*}$, and 
\[
\mathbb{E}^{\mathcal{W}}\left[\exp\left(-8\pi^{2}\kappa T_{\Lambda}^{*}\right)\right]=\Lambda^{K}.
\]
We observe that $\mathbb{E}^{\mathcal{W}}\left[\exp\left(-8\pi^{2}\kappa T_{\Lambda}\right)\right]$
is greater than the integral of $\exp\left(-8\pi^{2}\kappa T_{\Lambda}\right)$
over the subset $\left\{ T_{\Lambda}\leq T_{\Lambda}^{*}\right\} $,
where the integrand is greater or equal to $\exp\left(-8\pi^{2}\kappa T_{\Lambda}^{*}\right)$.
Therefore, we have 
\[
\mathbb{E}^{\mathcal{W}}\left[\exp\left(-8\pi^{2}\kappa T_{\Lambda}\right)\right]\geq\mathbb{E}^{\mathcal{W}}\left[\exp\left(-8\pi^{2}\kappa T_{\Lambda}^{*}\right)\right]-\mathbb{E}^{\mathcal{W}}\left[\exp\left(-8\pi^{2}\kappa T_{\Lambda}^{*}\right)\chi_{\left\{ T_{\Lambda}>T_{\Lambda}^{*}\right\} }\right].
\]
It is clear that in order to get the desired lower bound, we need
to find constant $0<c<1$ such that 
\begin{equation}
\Lambda^{-K}\mathbb{E}^{\mathcal{W}}\left[\exp\left(-8\pi^{2}\kappa T_{\Lambda}^{*}\right)\chi_{\left\{ T_{\Lambda}>T_{\Lambda}^{*}\right\} }\right]\leq c\label{eq:lower bound ineq}
\end{equation}
uniformly in small $\Lambda$. Conditioning on $T_{\Lambda}^{*}=T$,
$T_{\Lambda}>T$ implies $\hat{m}^{\theta}\left(B_{r\left(T\right)}\right)>\Lambda$
and hence $\frac{\hat{m}^{\theta}\left(B_{r\left(T\right)}\right)}{\hat{m}^{\theta*}\left(B_{r\left(T\right)}\right)}>1$.
By Chebyshev's inequality, other than a factor given by the probability
density function of $T_{\Lambda}^{*}$, the conditional probability
of $\left\{ T_{\Lambda}>T\right\} $ is bounded by the expectation
of $\frac{\hat{m}^{\theta}\left(B_{r\left(T\right)}\right)}{\hat{m}^{\theta*}\left(B_{r\left(T\right)}\right)}$,
which, given the expression in (\ref{eq:cond exp of area}) (which
is the conditional expectation of the numerator given the denominator),
can be bounded by constant $c\in\left(0,1\right)$ which is universal
in $\Lambda$ and $T$. So the estimate in (\ref{eq:lower bound ineq})
will be satisfied by this choice of $c$. $\square$ 

$\quad$\emph{}\\
\emph{Proof of Theorem \ref{thm:KPZ}:} Assume $D\subseteq\overline{B_{N}\left(0\right)}$
for some sufficently large $N\geq1$. Let $r_{\Lambda}\left(x,\theta\right)$
and $D^{\Lambda,\theta}$ be as defined in (\ref{eq:def of r_Delta under M})
and (\ref{eq:vol-delta nbhd of D}). Denote $\mathcal{N}\left(d\theta dx\right)\equiv\mathcal{W}\left(d\theta\right)dx$.
Based on Lemma \ref{lem:area dist under W*N}, $\mathbb{E}^{\mathcal{W}}\left[m^{\theta}\left(D^{\Lambda,\theta}\right)\right]$
equals 
\begin{equation}
\begin{split} & \mathcal{M}\left(\left\{ \left(x,\theta\right):\mbox{ either }x\in D\mbox{ or dist}\left(x,D\right)<r_{\Lambda}\left(x,\theta\right)\right\} \right)\\
=\quad & \mathcal{N}\left(\left\{ \left(x,\theta\right):\left|x\right|\leq2N,\mbox{dist}\left(x,D\right)<\hat{r}_{\Lambda}\left(x,\theta\right)\right\} \right)\\
 & \qquad+\lim_{2N\leq M\rightarrow\infty}\mathcal{N}\left(\left\{ \left(x,\theta\right):2N\leq\left|x\right|\leq M,\mbox{ dist}\left(x,D\right)<\hat{r}_{\Lambda}\left(x,\theta\right)\right\} \right).
\end{split}
\label{eq:split M-probability into x cpt and x uncpt}
\end{equation}
In the first term of the right hand side of (\ref{eq:split M-probability into x cpt and x uncpt}),
conditioning on $\hat{r}_{\Lambda}\left(x,\theta\right)$ under $\mathcal{N}\left(d\theta dx\right)$,
since its marginal distribution on $\Theta$ does not depend on $x$,
the conditional probability of the set is proportional to $\mbox{vol}\left(D_{\hat{r}_{\Lambda}\left(x,\theta\right)}\cap\overline{B_{2N}\left(0\right)}\right)$.
We further split the set into two cases: $\hat{r}_{\Lambda}\left(x,\theta\right)>N$
and $\hat{r}_{\Lambda}\left(x,\theta\right)\leq N$, the later of
which also implies $D_{\hat{r}_{\Lambda}\left(x,\theta\right)}\subseteq\overline{B_{2N}\left(0\right)}$.
Therefore, the first term can be rewritten as (up to a constant depending
on $N$) 
\[
\begin{split} & \mathbb{E}^{\mathcal{W}}\left[\mbox{vol}\left(D_{\hat{r}_{\Lambda}\left(x,\theta\right)}\right)\chi_{\left\{ \hat{r}_{\Lambda}\left(x,\theta\right)\leq N\right\} }\right]+\mathbb{E}^{\mathcal{W}}\left[\mbox{vol}\left(D_{\hat{r}_{\Lambda}\left(x,\theta\right)}\cap\overline{B_{2N}\left(0\right)}\right)\chi_{\left\{ \hat{r}_{\Lambda}\left(x,\theta\right)>N\right\} }\right]\\
= & \mathbb{E}^{\mathcal{W}}\left[\mbox{vol}\left(D_{\hat{r}_{\Lambda}\left(x,\theta\right)}\right)\right]-\mathbb{E}^{\mathcal{W}}\left[\mbox{vol}\left(D_{\hat{r}_{\Lambda}\left(x,\theta\right)}\right)\chi_{\left\{ \hat{r}_{\Lambda}\left(x,\theta\right)>N\right\} }\right]\\
 & \qquad\qquad\qquad\qquad\qquad\qquad\qquad+\mathbb{E}^{\mathcal{W}}\left[\mbox{vol}\left(D_{\hat{r}_{\Lambda}\left(x,\theta\right)}\cap\overline{B_{2N}\left(0\right)}\right)\chi_{\left\{ \hat{r}_{\Lambda}\left(x,\theta\right)>N\right\} }\right].
\end{split}
\]
According to the assumption (\ref{eq:exponent-Euclid.}) and Lemma
\ref{lem: ratio of r^* with r}, $\mathbb{E}^{\mathcal{W}}\left[\mbox{vol}\left(D_{\hat{r}_{\Lambda}\left(x,\theta\right)}\right)\right]\approx\Lambda^{K}$
when $\Lambda$ is sufficiently small. On the other hand, given $\hat{r}_{\Lambda}\left(x,\theta\right)>N$,
$D_{\hat{r}_{\Lambda}\left(x,\theta\right)}$ is always contained
in the ball centered at the origin with radius $2\hat{r}_{\Lambda}\left(x,\theta\right)$,
so the last two terms in the right hand side of the equation above
are both bounded by (up to a constant) 
\begin{equation}
\mathbb{E}^{\mathcal{W}}\left[\left(\hat{r}_{\Lambda}\left(x,\theta\right)\right)^{4}\chi_{\left\{ \hat{r}_{\Lambda}\left(x,\theta\right)>N\right\} }\right]\leq4\int_{[1,\infty)}u^{3}\mathcal{W}\left(\hat{r}_{\Lambda}\left(x,\theta\right)>u\right)du.\label{eq:KPZ tail bound 1}
\end{equation}
If $\zeta\equiv2\rho\left(8\pi^{2}-\gamma^{2}-\frac{\gamma^{2}}{\rho}-\delta\right)$
where $\delta$ and $\rho$ are the same as in the statement of Lemma
\ref{lem: tail estimate}, then by (\ref{eq:tail estimate}), \foreignlanguage{english}{
\[
\mathcal{W}\left(\hat{r}_{\Lambda}\left(x,\theta\right)>u\right)\leq\mathcal{W}\left(\hat{m}^{\theta,x}\left(B_{u}\left(x\right)\right)\leq\Lambda\right)\leq C\Lambda^{\frac{\zeta}{\gamma^{2}}}u^{-\frac{\zeta}{\gamma^{2}}\left(4-\frac{\gamma^{2}}{2\pi^{2}}\right)}.
\]
}Given the particular range of $\delta,\rho$ and $\zeta$, one sees
that not only is the integral in (\ref{eq:KPZ tail bound 1}) finite,
but it also converges to zero faster than $\Lambda^{K}$ as $\Lambda\downarrow0$
for any possible value of $K\in[0,1]$. 

In the second term in (\ref{eq:split M-probability into x cpt and x uncpt}),
since $D\subseteq\overline{B_{N}\left(0\right)}$, the assumptions
$\left|x\right|\geq2N$ and $\mbox{dist}\left(x,D\right)<\hat{r}_{\Lambda}\left(x,\theta\right)$
imply $\hat{r}_{\Lambda}\left(x,\theta\right)>\frac{1}{2}\left|x\right|$
whose probability, as we have seen earlier, is bounded by $C\Lambda^{\frac{\zeta}{\gamma^{2}}}\left|x\right|^{-\frac{\zeta}{\gamma^{2}}\left(4-\frac{\gamma^{2}}{2\pi^{2}}\right)}$
which is integrable (with respect to $dx$) in the entire domain $\left\{ \left|x\right|\geq2N\right\} $.
Therefore the second term also converges to zero faster than $\Lambda^{K}$
as $\Lambda\downarrow0$. To summarize, we have shown that $\mathbb{E}^{\mathcal{W}}\left[m^{\theta}\left(D^{\Lambda,\theta}\right)\right]$
is a constant multiple of $\Lambda^{K}+o\left(\Lambda^{K}\right)$
as $\Lambda\downarrow0$ which is sufficient for the desired conclusion.
$\square$

\section{Possible Generalizations }

\subsection*{Generalizations to $\mathbf{\mathbb{R}^{2n}}$:}

In this subsection, we outline a possible generalization of the four
dimensional treatments carried out in previous sections to higher
even dimensions $\mathbb{R}^{2n}$ with $n\geq2$. We consider the
GFF on $\mathbb{R}^{2n}$ with the underlying Hilbert space $H\equiv H^{n}\left(\mathbb{R}^{2n}\right)$
which is the completion of the Schwartz test function space $\mathcal{S}\left(\mathbb{R}^{2n}\right)$
under the inner product $\left(\left(I-\Delta\right)^{n}\cdot,\cdot\right)_{L^{2}}$.
Similarly, for every $x\in\mathbb{R}^{2n}$ and $\epsilon>0$, $\sigma_{\epsilon}^{x}$
denotes the tempered distribution which is to take the spherical average
of a test function over the sphere $S_{\epsilon}\left(x\right)$.
In this setting, $\sigma_{\epsilon}^{x}\in H^{-n}\left(\mathbb{R}^{2n}\right)$
and again if $h_{\sigma_{\epsilon}^{x}}\equiv\left(I-\Delta\right)^{-n}\sigma_{\epsilon}^{x}$,
then $h_{\sigma_{\epsilon}^{x}}\in H$ and the Paley-Wiener integral
$\mathcal{I}\left(h_{\sigma_{\epsilon}^{x}}\right)$ can be viewed
as the ``generalized'' action of $\sigma_{\epsilon}^{x}$ on the
GFF. Moreover, the higher order of the operator $\left(I-\Delta\right)^{n}$
allows us to take higher ``derivatives'' of $\sigma_{\epsilon}^{x}$
in the radial variable $\epsilon$. If $d^{m}\sigma_{\epsilon}^{x}\equiv\frac{d^{m}}{d\epsilon^{m}}\sigma_{\epsilon}^{x}$
is defined in the sense of tempered distribution for every $m\in\mathbb{N}$,
then simple computations of the Fourier transforms show that 
\[
\hat{\sigma_{\epsilon}^{x}}\left(\xi\right)=C_{n}e^{i\left(x,\xi\right)_{\mathbb{R}^{2n}}}\left(\epsilon\left|\xi\right|\right)^{1-n}J_{n-1}\left(\epsilon\left|\xi\right|\right)
\]
 
\[
\mbox{ and }\left(d^{m}\sigma_{\epsilon}^{x}\right)^{\mathcircumflex}\left(\xi\right)=\frac{d^{m}}{d\epsilon^{m}}\hat{\sigma_{\epsilon}^{x}}\left(\xi\right)=C_{n}e^{i\left(x,\xi\right)_{\mathbb{R}^{2n}}}\frac{d^{m}}{d\epsilon^{m}}\left(\frac{J_{n-1}\left(\epsilon\left|\xi\right|\right)}{\left(\epsilon\left|\xi\right|\right)^{n-1}}\right),
\]
where $C_{n}>0$ is a dimensional constant. In particular, we can
write 
\[
\left(d^{m}\sigma_{\epsilon}^{x}\right)^{\mathcircumflex}\left(\xi\right)=C_{n}e^{i\left(x,\xi\right)_{\mathbb{R}^{2n}}}\varphi^{\left(m\right)}\left(\epsilon\left|\xi\right|\right)\left|\xi\right|^{m}
\]
where (\cite{BesselFunctions}, §3.31) $\varphi\left(r\right)\equiv\frac{J_{n-1}\left(r\right)}{r^{n-1}}$
for $r>0$ and $\varphi^{\left(m\right)}\left(r\right)$ is analytic
in $r$ near 0 and asymptotic to $r^{-\left(n-\frac{1}{2}\right)}$
as $r\rightarrow\infty$ for every $m\in\mathbb{N}$. Therefore, $\sigma_{\epsilon}^{x}$
and $d^{m}\sigma_{\epsilon}^{x}$ for $1\leq m\leq n-1$ are in $H^{-n}\left(\mathbb{R}^{2n}\right)$.

We can mimic the approach in Section 2 and define the vector-valued
Gaussian random variable on $\Theta$: for every $x\in\mathbb{R}^{2n}$
and $\epsilon>0$, 
\[
V_{\epsilon}^{x}\equiv\left(\mathcal{I}\left(h_{\sigma_{\epsilon}^{x}}\right),\mathcal{I}\left(h_{d\sigma_{\epsilon}^{x}}\right),\cdots,\mathcal{I}\left(h_{d^{n-1}\sigma_{\epsilon}^{x}}\right)\right)^{\top}.
\]
It turns out that in this setting we can also compute the covariance
matrix of the family $\left\{ V_{\epsilon}^{x}:x\in\mathbb{R}^{2n},\epsilon>0\right\} $
explicitly under each circumstance as prescribed in Lemma \ref{lem:on V_t(x)},
and the covariance matrix also has a similar ``separability'' property
as in four dimensional case. In fact, following a similar computation
as the one (provided in the appendix) conducted to prove Lemma \ref{lem:on V_t(x)},
it is not hard to see that there exist invertible $n\times n$ matrices
$\mathbf{A}\left(r\right)$, $\mathbf{B}\left(r\right)$, $\mathbf{C}\left(r\right)$
and $\mathbf{D}\left(r\right)$ for every $r\in\left(0,\infty\right)$,
such that all the entries of $\mathbf{A}\left(r\right)$ and $\mathbf{D}\left(r\right)$
are functions in the linear span of $\left\{ r^{-\ell}K_{k}\left(r\right):0\leq\ell\leq k\leq2n-2\right\} $,
while all the entries of $\mathbf{B}\left(r\right)$ and $\mathbf{C}\left(r\right)$
are in the linear span of $\left\{ r^{-\ell}I_{k}\left(r\right):0\leq\ell\leq k\leq2n-2\right\} $.
Moreover, given $x\in\mathbb{R}^{2n}$ and $\epsilon_{1}\geq\epsilon_{2}>0$,
\foreignlanguage{english}{$\mathbb{E}^{\mathcal{W}}\left[V_{\epsilon_{1}}^{x}\left(V_{\epsilon_{2}}^{x}\right)^{\top}\right]=\mathbf{A}\left(\epsilon_{1}\right)\mathbf{B}^{\top}\left(\epsilon_{2}\right)$;
given $x,y\in\mathbb{R}^{2n}$ with $x\neq y$ and $\epsilon_{1}>\left|x-y\right|+\epsilon_{2}$,
$\mathbb{E}^{\mathcal{W}}\left[V_{\epsilon_{1}}^{x}\left(V_{\epsilon_{2}}^{y}\right)^{\top}\right]=\mathbf{A}\left(\epsilon_{1}\right)\mathbb{\mathbf{C}}\left(\left|x-y\right|\right)\mathbf{B}^{\top}\left(\epsilon_{2}\right)$;
given $x,y\in\mathbb{R}^{2n}$ with $x\neq y$ and $\left|x-y\right|>\epsilon_{1}+\epsilon_{2}$,
$\mathbb{E}^{\mathcal{W}}\left[V_{\epsilon_{1}}^{x}\left(V_{\epsilon_{2}}^{y}\right)^{\top}\right]=\mathbf{B}\left(\epsilon_{1}\right)\mathbb{\mathbf{D}}\left(\left|x-y\right|\right)\mathbf{B}^{\top}\left(\epsilon_{2}\right)$.
Therefore, if we similarly defined the ``normalized'' vector $U_{\epsilon}^{x}\equiv\mathbf{B}^{-1}\left(\epsilon\right)V_{\epsilon}^{x}$,
then the Gaussian family $\left\{ U_{\epsilon}^{x}:x\in\mathbb{R}^{2n},\epsilon>0\right\} $
will have the same properties as those of the corresponding family
(also denoted by $U_{\epsilon}^{x}$) in four dimensions.}

\selectlanguage{english}%
On the other hand, all the entries of the matrix $\mathbf{B}\left(\epsilon\right)$
are linear combinations of $\epsilon^{-l}I_{k}\left(\epsilon\right)$
with $0\leq l\leq k\leq2n-2$, and if one lets $\epsilon_{2}\downarrow0$
in the covariance matrix obtained in the second circumstance (when
$\epsilon_{1}>\left|x-y\right|+\epsilon_{2}$) from above, combined
with integral expressions for the entries of the covariance matrix,
then one can easily conclude that there exists constant matrix $\mathbf{B}$
which is non-degenerate (hence so is $\mathbf{B}^{-1}$) such that
$\mathbf{B}\left(\epsilon\right)$ converges to $\mathbf{B}$ as $\epsilon\downarrow0$.
Therefore, all the entries of $\mathbf{B}^{-1}\left(\epsilon\right)$
must be analytic in $\epsilon$ near zero. In particular, by examining
the asymptotics of the entries of $\mathbf{B}^{-1}\left(\epsilon\right)$
near zero, one can find the appropriate constant vector $\zeta\in\mathbb{R}^{2n}$
such that $\left(U_{\epsilon}^{x},\zeta\right)_{\mathbb{R}^{2n}}$
``approximates'' the GFF at $x$ when $\epsilon$ is small in the
same sense as described in Section 2. 

\selectlanguage{american}%
Clearly, \foreignlanguage{english}{$\left(U_{\epsilon}^{x},\zeta\right)_{\mathbb{R}^{2n}}$}
has all the properties of $\mathcal{I}\left(h_{\mu_{\epsilon}^{x}}\right)$
in four dimensions as stated in Theorem \ref{thm:mu_epsilon}. \foreignlanguage{english}{When
}$\epsilon_{1}\geq\epsilon_{2}>0$,\foreignlanguage{english}{ 
\[
G\left(\epsilon_{1}\right)\equiv\mathbb{E}^{\mathcal{W}}\left[\left(U_{\epsilon_{1}}^{x},\zeta\right)_{\mathbb{R}^{2n}}^{2}\right]=\mathbb{E}^{\mathcal{W}}\left[\left(U_{\epsilon_{1}}^{x},\zeta\right)_{\mathbb{R}^{2n}}\left(U_{\epsilon_{2}}^{x},\zeta\right)_{\mathbb{R}^{2n}}\right];
\]
}when $\epsilon_{1}>\epsilon_{2}+\left|x-y\right|$, $\mathbb{E}^{\mathcal{W}}\left[\left(U_{\epsilon_{1}}^{x},\zeta\right)_{\mathbb{R}^{2n}}\left(U_{\epsilon_{2}}^{y},\zeta\right)_{\mathbb{R}^{2n}}\right]=c\left(\epsilon_{1},\left|x-y\right|\right)$
which is independent of $\epsilon_{2}$; when $\left|x-y\right|>\epsilon_{1}+\epsilon_{2}$,
$\mathbb{E}^{\mathcal{W}}\left[\left(U_{\epsilon_{1}}^{x},\zeta\right)_{\mathbb{R}^{2n}}\left(U_{\epsilon_{2}}^{y},\zeta\right)_{\mathbb{R}^{2n}}\right]=d\left(\left|x-y\right|\right)$
which is independent of $\epsilon_{1}$ and $\epsilon_{2}$. In principle,
we can derive the explicit formulas of $G\left(\epsilon_{1}\right)$,
$c\left(\epsilon_{1},\left|x-y\right|\right)$ and $d\left(\left|x-y\right|\right)$,
and one can expect that they have logarithmic growth when $\epsilon_{1}$
and $\left|x-y\right|$ are small because the Green's function of
the operator $\left(I-\Delta\right)^{n}$ on $\mathbb{R}^{2n}$ has
logarithmic growth near the diagonal. Therefore, it's reasonable to
believe that if one takes $\left(U_{\epsilon}^{x},\zeta\right)_{\mathbb{R}^{2n}}$
to construct a sequence of approximating measures, i.e., 
\[
m_{\epsilon_{k}}^{\theta}\left(dx\right)\equiv\exp\left(\gamma\left(U_{\epsilon_{k}}^{x},\zeta\right)_{\mathbb{R}^{2n}}\left(\theta\right)-\frac{\gamma^{2}}{2}G\left(\epsilon_{k}\right)\right)dx,
\]
then the sequence $\left\{ m_{\epsilon_{k}}^{\theta}:\, k\geq1\right\} $
will almost surely admit a limit measure in the sense of weak convergence.
Furthermore, the quantum scaling component of a bounded set on $\mathbb{R}^{2n}$
under this limit measure should also satisfy a quadratic relation
with its counterpart under the Lebesgue measure. However, the amount
and the complexity of computations quickly become considerable as
$n$ increases.

\subsection*{Generalizations to Manifolds:}

In this last part we explain a more conceptual approach to constructing
analogues of the two-dimensional GFF on compact even-dimensional manifolds.
As we have remarked in the introduction, in dimension two, the GFF
defines a measure on a conformal class of metrics on a Riemann surface
$\Sigma$, constructed starting with a reference metric $g_{0}$ on
$\Sigma$, but in the end independent of $g_{0}$. In fact, the GFF
inner product of two functions $f_{1},f_{2}\in C_{c}^{\infty}(\Sigma)$
is defined by (\cite{Shef,DS1,DS2,HMP}) 
\[
(f_{1},f_{2})_{\Delta_{g_{0}}}\equiv(f_{1},\Delta_{g_{0}}f_{2})\equiv\int_{\Sigma}f_{1}(x)(\Delta_{g_{0}}f_{2})(x)d\textrm{vol}_{g_{0}}(x),
\]
where $\Delta_{g_{0}}$ is the Laplace-Beltrami operator on $\Sigma$
with respect to $g_{0}$. This inner product is conformally invariant.
Indeed, if the metric $g_{0}$ is changed conformally to $g_{1}=e^{2\omega}g_{0}$
for some $\omega\in C_{c}^{\infty}\left(\Sigma\right)$, then the
volume element changes as 
\[
d\textrm{vol}_{g_{1}}=e^{2\omega}d\textrm{vol}_{g_{0}},
\]
while the Laplacian is changed as $\Delta_{g_{1}}=e^{-2\omega}\Delta_{g_{0}}$.
Therefore, after obvious cancellations we find that 
\[
(f_{1},f_{2})_{\Delta_{g_{1}}}=(f_{1},f_{2})_{\Delta_{g_{0}}}.
\]

It seems natural to define a similar measure for conformal classes
of metrics in higher dimensions. Below, we explain how to do that
for certain conformal classes on compact manifolds $\mathfrak{M}$
of even dimension $2n$. In the construction, we find it convenient
to use \emph{conformally covariant} elliptic operators described below.
In the discussion, we restrict ourselves to even-dimensional manifolds,
although the corresponding operators can be defined in odd dimensions
as well.

Let $\mathfrak{M}$ be a manifold of even dimension $2n$, $n\geq2$,
and $g_{0}$ a Riemannian metric on $\mathfrak{M}$. Then, there exists
on $\mathfrak{M}$ an elliptic operator $P=P_{g_{0}}$ of order $2n$,
called the \emph{dimension-critical GJMS operator}, constructed by
Graham-Jenne-Mason-Sparling in \cite{GJMS}, with the following properties:
\[
P=\Delta^{n}+\mbox{ lower order terms};
\]
in fact, $P$ has a polynomial expression in (Levi-Civita connection)
$\nabla$ and (scalar curvature) $R$, with coefficients that are
rational in dimension $2n$; $P$ is formally self-adjoint (\cite{GZ,FG});
under a conformal change of metric $g_{1}=e^{2\omega}g_{0}$, the
operator $P$ changes as $P_{g_{1}}=e^{-2n\omega}P_{g_{0}}$. 

Given these properties, we can imitate the construction of the GFF
in dimension $2$: for $f_{1},f_{2}\in C^{\infty}(\mathfrak{M})$,
the inner product is defined by 
\[
(f_{1},f_{2})_{P_{g_{0}}}\equiv\int_{\mathfrak{M}}f_{1}(x)(P_{g_{0}}f_{2})(x)d\textrm{vol}_{g_{0}}(x).
\]
Then, this inner product is also conformally invariant. When the metric
$g_{0}$ is changed conformally to $g_{1}=e^{2\omega}g_{0}$, the
volume element changes as 
\[
d\textrm{vol}_{g_{1}}=e^{2n\omega}d\textrm{vol}_{g_{0}},
\]
while $P$ changes as $P_{g_{1}}=e^{-2n\omega}P_{g_{0}}$. Again,
we get the relation 
\[
(f_{1},f_{2})_{P_{g_{1}}}=(f_{1},f_{2})_{P_{g_{0}}},
\]
 just like in dimension two.

When $n=2$, the dimension-critical GJMS operator 
\[
P_{4}=\Delta_{g_{0}}^{2}+\delta[(2/3)R_{g_{0}}g_{0}-2{\rm Ric}_{g_{0}}]d
\]
is also called the \emph{Paneitz operator}. If $\mathfrak{M}$ is
flat, then the Paneitz operator is equal to $\Delta^{2}$, hence in
$\mathbb{R}^{4}$ it is natural to work with $\Delta^{2}$. However,
since $\mathbb{R}^{4}$ is not compact, we need to consider the operator
on a compact domain, in which case we have to choose proper boundary
operators in order to preserve the conformal covariance property.
This will be further explored in future work.

On the compact $2n$-dimensional manifold $\mathfrak{M}$, if we construct
a Gaussian random field using the dimension-critical GJMS operator
$P$, then the covariance function of the field is given by the Green's
function $G_{P}(x,y)$ of the operator $P$. Let $d\left(x,y\right)$
be the Riemannian distance between $x$ and $y$ on $\mathfrak{M}$.
Then, it is known (\cite{CY,Nd,Ponge}) that as $d\left(x,y\right)\downarrow0$,
$G_{P}(x,y)$ is asymptotic to $-C_{n}\log d(x,y)$ where $C_{n}>0$
depends only on the dimension. This is similar to the well-known behavior
of the Green's function of the Laplace-Beltrami operator $\Delta$
in dimension two. This will become an important ingredient in the
construction of the random measure on the manifold, which we intend
to explore in a future paper.

\section{Appendix}

This section contains all the computations with the Bessel functions.
We start with the Fourier transforms of $\sigma_{\epsilon}^{x}$ and
$d\sigma_{\epsilon}^{x}$, and list all the integral expressions for
the covariance function of the family $\left\{ \mathcal{I}\left(h_{\sigma_{\epsilon}^{x}}\right),\mathcal{I}\left(h_{d\sigma_{\epsilon}^{x}}\right):\, x\in\mathbb{R}^{4},\epsilon>0\right\} $.
\begin{lem}
\label{lem:integral forms for cov}Recall from (\ref{eq:fourier transf. of average})
and (\ref{eq:fourier transf. for derivative of average}) that the
Fourier transforms of $\sigma_{\epsilon}^{x}$ and $d\sigma_{\epsilon}^{x}$
are, respectively,
\[
\hat{\sigma_{\epsilon}^{x}}\left(\xi\right)=2\left(\epsilon\left|\xi\right|\right)^{-1}J_{1}\left(\epsilon\left|\xi\right|\right)e^{i\left(x,\xi\right)_{\mathbb{R}^{4}}}
\]
\foreignlanguage{english}{\textup{
\[
\mbox{and }\hat{d\sigma_{\epsilon}^{x}}\left(\xi\right)=\frac{d}{d\epsilon}\hat{\sigma_{\epsilon}^{x}}\left(\xi\right)=-2\epsilon^{-1}J_{2}\left(\epsilon\left|\xi\right|\right)e^{i\left(x,\xi\right)_{\mathbb{R}^{4}}}.
\]
}}Therefore, both $\sigma_{\epsilon}^{x}$ and $d\sigma_{\epsilon}^{x}$
are in $H^{-2}\left(\mathbb{R}^{4}\right)$. In fact, for $\epsilon_{1},\epsilon_{2}>0$,
\begin{equation}
\mathbb{E}^{\mathcal{W}}\left[\mathcal{I}\left(h_{\sigma_{\epsilon_{1}}^{x}}\right)\mathcal{I}\left(h_{\sigma_{\epsilon_{2}}^{x}}\right)\right]=\frac{1}{2\pi^{2}\epsilon_{1}\epsilon_{2}}\int_{0}^{\infty}\frac{\tau}{\left(1+\tau^{2}\right)^{2}}J_{1}\left(\epsilon_{1}\tau\right)J_{1}\left(\epsilon_{2}\tau\right)d\tau,\label{eq:var sphe ave integral form}
\end{equation}
 
\begin{equation}
\mathbb{E}^{\mathcal{W}}\left[\mathcal{I}\left(h_{\sigma_{\epsilon_{1}}^{x}}\right)\mathcal{I}\left(h_{d\sigma_{\epsilon_{2}}^{x}}\right)\right]=\frac{-1}{2\pi^{2}\epsilon_{1}\epsilon_{2}}\int_{0}^{\infty}\frac{\tau^{2}}{\left(1+\tau^{2}\right)^{2}}J_{1}\left(\epsilon_{1}\tau\right)J_{2}\left(\epsilon_{2}\tau\right)d\tau,\label{eq:var sphe ave mix integral form}
\end{equation}
and 
\begin{equation}
\mathbb{E}^{\mathcal{W}}\left[\mathcal{I}\left(h_{d\sigma_{\epsilon_{1}}^{x}}\right)\mathcal{I}\left(h_{d\sigma_{\epsilon_{2}}^{x}}\right)\right]=\frac{1}{2\pi^{2}\epsilon_{1}\epsilon_{2}}\int_{0}^{\infty}\frac{\tau^{3}}{\left(1+\tau^{2}\right)^{2}}J_{2}\left(\epsilon_{1}\tau\right)J_{2}\left(\epsilon_{2}\tau\right)d\tau.\label{eq:var sphe ave deriv integral form}
\end{equation}
Furthermore, for $x,y\in\mathbb{R}^{4}$, $x\neq y$, and $\epsilon_{1},\epsilon_{2}>0$,
\begin{multline}
\begin{split}\mathbb{E}^{\mathcal{W}}\left[\mathcal{I}\left(h_{\sigma_{\epsilon_{1}}^{x}}\right)\mathcal{I}\left(h_{\sigma_{\epsilon_{2}}^{y}}\right)\right]\qquad\qquad\qquad\qquad\qquad\qquad\qquad\qquad\qquad\qquad\\
=\frac{1}{\pi^{2}\epsilon_{1}\epsilon_{2}\left|x-y\right|}\int_{0}^{\infty}\frac{1}{\left(1+\tau^{2}\right)^{2}}J_{1}\left(\epsilon_{1}\tau\right)J_{1}\left(\epsilon_{2}\tau\right)J_{1}\left(\left|x-y\right|\tau\right)d\tau,
\end{split}
\label{eq:cov sphe ave integral form}
\end{multline}
 
\begin{equation}
\begin{split}\mathbb{E}^{\mathcal{W}}\left[\mathcal{I}\left(h_{\sigma_{\epsilon_{1}}^{x}}\right)\mathcal{I}\left(h_{d\sigma_{\epsilon_{2}}^{y}}\right)\right]\qquad\qquad\qquad\qquad\qquad\qquad\qquad\qquad\qquad\qquad\\
=\frac{-1}{\pi^{2}\epsilon_{1}\epsilon_{2}\left|x-y\right|}\int_{0}^{\infty}\frac{\tau}{\left(1+\tau^{2}\right)^{2}}J_{1}\left(\epsilon_{1}\tau\right)J_{2}\left(\epsilon_{2}\tau\right)J_{1}\left(\left|x-y\right|\tau\right)d\tau,
\end{split}
\label{eq:cov sphe ave mix integral form}
\end{equation}
and 
\begin{equation}
\begin{split}\mathbb{E}^{\mathcal{W}}\left[\mathcal{I}\left(h_{d\sigma_{\epsilon_{1}}^{x}}\right)\mathcal{I}\left(h_{d\sigma_{\epsilon_{2}}^{y}}\right)\right]\qquad\qquad\qquad\qquad\qquad\qquad\qquad\qquad\qquad\qquad\\
=\frac{1}{\pi^{2}\epsilon_{1}\epsilon_{2}\left|x-y\right|}\int_{0}^{\infty}\frac{\tau^{2}}{\left(1+\tau^{2}\right)^{2}}J_{2}\left(\epsilon_{1}\tau\right)J_{2}\left(\epsilon_{2}\tau\right)J_{1}\left(\left|x-y\right|\tau\right)d\tau.
\end{split}
\label{eq:cov sphe ave deriv integral form}
\end{equation}
\end{lem}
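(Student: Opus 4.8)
The plan is to reduce all six identities to the covariance formula for Paley--Wiener integrals recorded in Section 2, namely $\mathbb{E}^{\mathcal{W}}[\mathcal{I}(h_{\nu_1})\mathcal{I}(h_{\nu_2})]=(\nu_1,\nu_2)_{H^{-2}}$, together with the definition of $\|\cdot\|_{H^{-2}}$. Polarizing the latter and using that $\sigma_\epsilon^x$ and $d\sigma_\epsilon^x$ are real-valued distributions (so that the imaginary part of the integrand is odd in $\xi$ and integrates to $0$), one has
\[
\mathbb{E}^{\mathcal{W}}\left[\mathcal{I}\left(h_{\nu_1}\right)\mathcal{I}\left(h_{\nu_2}\right)\right]=\frac{1}{\left(2\pi\right)^{4}}\int_{\mathbb{R}^{4}}\frac{\hat{\nu_{1}}\left(\xi\right)\overline{\hat{\nu_{2}}\left(\xi\right)}}{\left(1+\left|\xi\right|^{2}\right)^{2}}\,d\xi
\]
for all $\nu_1,\nu_2\in H^{-2}(\mathbb{R}^4)$. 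Every formula in the lemma will then follow by inserting \eqref{eq:fourier transf. of average} and \eqref{eq:fourier transf. for derivative of average} and evaluating the resulting integral over $\mathbb{R}^4$.

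First I would pin down \eqref{eq:fourier transf. of average}: writing out $\langle f,\sigma_\epsilon^x\rangle$ and moving to the Fourier side reduces the claim to the Fourier transform of the surface measure on the sphere. Parametrizing $S^3$ by a single polar angle gives $\int_{S^3}e^{ir(\omega,e)}\,d\omega=4\pi\int_{-1}^{1}e^{irt}(1-t^2)^{1/2}\,dt$, and the Bessel integral representation $J_\nu(r)=\frac{(r/2)^\nu}{\Gamma(\nu+\frac12)\Gamma(\frac12)}\int_{-1}^{1}e^{irt}(1-t^2)^{\nu-1/2}\,dt$ with $\nu=1$ turns the right side into $4\pi^2r^{-1}J_1(r)$; rescaling by $\epsilon$, translating by $x$ and dividing by the surface area $2\pi^2\epsilon^3$ yields $\hat{\sigma_\epsilon^x}(\xi)=2(\epsilon|\xi|)^{-1}J_1(\epsilon|\xi|)e^{i(x,\xi)}$, and \eqref{eq:fourier transf. for derivative of average} follows at once from the Bessel identity $\frac{d}{dz}(z^{-\nu}J_\nu(z))=-z^{-\nu}J_{\nu+1}(z)$ at $\nu=1$. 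The membership $\sigma_\epsilon^x,d\sigma_\epsilon^x\in H^{-2}(\mathbb{R}^4)$ is read off from these transforms: both are smooth and bounded near $\xi=0$ (as $J_1(z)/z\to\frac12$, $J_2(z)\to0$), while $|J_k(z)|=O(z^{-1/2})$ at infinity makes $(1+|\xi|^2)^{-2}|\hat{\sigma_\epsilon^x}(\xi)|^2=O(|\xi|^{-7})$ and $(1+|\xi|^2)^{-2}|\hat{d\sigma_\epsilon^x}(\xi)|^2=O(|\xi|^{-5})$, both integrable against $d\xi=2\pi^2\tau^3\,d\tau$ over $\mathbb{R}^4$.

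For the three concentric formulas \eqref{eq:var sphe ave integral form}--\eqref{eq:var sphe ave deriv integral form} the two distributions share the center $x$, so the phases $e^{i(x,\xi)}$ cancel in $\hat{\nu_1}\overline{\hat{\nu_2}}$, leaving a radial integrand; passing to polar coordinates $\tau=|\xi|$ with $d\xi=2\pi^2\tau^3\,d\tau$, cancelling the powers of $\tau$ coming from the Bessel prefactors ($\tau^{-2}$ for $\sigma$--$\sigma$, $\tau^{-1}$ for $\sigma$--$d\sigma$, none for $d\sigma$--$d\sigma$) against $\tau^3$, and collecting constants via $\frac{2\pi^2}{(2\pi)^4}=\frac{1}{2\pi^2}$ reproduces each formula. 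For the three non-concentric formulas \eqref{eq:cov sphe ave integral form}--\eqref{eq:cov sphe ave deriv integral form} I would write $z=x-y\neq0$; the product of transforms then carries the factor $e^{i(z,\xi)}$, so I would integrate first over each sphere $\{|\xi|=\tau\}$ (on which the radial Bessel factors are constant) using the same sphere--Fourier identity in the form $\int_{|\xi|=\tau}e^{i(z,\xi)}\,d\sigma(\xi)=4\pi^2\tau^2|z|^{-1}J_1(\tau|z|)$, and then integrate in $\tau$; the constants now combine as $\frac{16\pi^2}{(2\pi)^4}=\frac{1}{\pi^2}$, which accounts for the prefactor $1/(\pi^2\epsilon_1\epsilon_2|x-y|)$ in each of the three formulas.

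I do not expect a genuine obstacle --- the proof is bookkeeping with Bessel identities. The two points deserving care are keeping every normalization consistent (the convention ``Fourier transform without the $(2\pi)^{-2}$ factor'' built into $\|\cdot\|_{H^{-2}}$, the surface area $2\pi^2$ of $S^3\subset\mathbb{R}^4$, and the $(2\pi^2\epsilon^3)^{-1}$ normalization in the definition of $\sigma_\epsilon^x$), and justifying the interchange of the $\tau$-integration with the sphere integration in the non-concentric case. The latter is legitimate because the decay estimates above, together with $|J_1(\tau|z|)|=O(\tau^{-1/2})$, show the $\mathbb{R}^4$ integrand is absolutely integrable (indeed $O(|\xi|^{-11/2})$ near infinity), so Fubini's theorem and the passage to polar coordinates apply without qualification.
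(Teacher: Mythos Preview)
Your proposal is correct and follows essentially the same approach as the paper: compute the $H^{-2}$ inner products by inserting the Fourier transforms \eqref{eq:fourier transf. of average}--\eqref{eq:fourier transf. for derivative of average}, pass to spherical coordinates in $\mathbb{R}^4$ using the Poisson integral representation of $J_1$, and invoke the $O(r^{-1/2})$ decay of $J_k$ for convergence. The paper's proof is a two-sentence sketch of exactly this route; you have simply supplied the bookkeeping (constants, Fubini justification) that the paper leaves implicit. One minor quibble: your closing decay estimate $O(|\xi|^{-11/2})$ is slightly garbled, since the extra $J_1(\tau|z|)$ factor arises only \emph{after} the angular integration and does not belong in the $\mathbb{R}^4$ integrand --- but the absolute integrability you need (worst case $O(|\xi|^{-5})$ in $\mathbb{R}^4$) holds regardless, so this does not affect the argument.
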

\begin{proof}
Everything follows from straightforward computations in spherical
coordinates in $\mathbb{R}^{4}$ and applications of the following
integral expression of the Bessel functions (\cite{BesselFunctions},
§3.3 ): for every $k\geq1$ and $r>0$, 
\[
J_{k}\left(r\right)=\frac{\left(\frac{r}{2}\right)^{k}}{\Gamma\left(k+\frac{1}{2}\right)\Gamma\left(\frac{1}{2}\right)}\int_{0}^{\pi}e^{ir\cos\left(\theta\right)}\sin^{2k}\theta d\theta.
\]
In addition, as we have indicated in Section 2, the asymptotic expansion
(\cite{BesselFunctions}, §7.1) of $J_{k}$ says that $J_{k}\left(r\right)=\mathcal{O}\left(r^{-1/2}\right)$
as $r\rightarrow\infty$, which is sufficient to guarantee the convergence
of each integral involved in (\ref{eq:var sphe ave integral form})-(\ref{eq:cov sphe ave deriv integral form}). 
\end{proof}
It will be convenient to recognize that all the covariance functions
involved in the previous lemma, i.e., 
\[
\mathbb{E}^{\mathcal{W}}\left[\mathcal{I}\left(h_{\sigma_{\epsilon_{1}}^{x}}\right)\mathcal{I}\left(h_{\sigma_{\epsilon_{2}}^{y}}\right)\right],\,\mathbb{E}^{\mathcal{W}}\left[\mathcal{I}\left(h_{\sigma_{\epsilon_{1}}^{x}}\right)\mathcal{I}\left(h_{d\sigma_{\epsilon_{2}}^{y}}\right)\right]\mbox{ and }\mathbb{E}^{\mathcal{W}}\left[\mathcal{I}\left(h_{d\sigma_{\epsilon_{1}}^{x}}\right)\mathcal{I}\left(h_{d\sigma_{\epsilon_{2}}^{y}}\right)\right],
\]
are continuous in all variables $x,y\in\mathbb{R}^{4}$ and $\epsilon_{1},\epsilon_{2}>0$.
In fact, 
\[
\mathbb{E}^{\mathcal{W}}\left[\mathcal{I}\left(h_{\sigma_{\epsilon_{1}}^{x}}\right)\mathcal{I}\left(h_{d\sigma_{\epsilon_{2}}^{y}}\right)\right]=\frac{d}{d\epsilon_{2}}\mathbb{E}\left[\mathcal{I}\left(h_{\sigma_{\epsilon_{1}}^{x}}\right)\mathcal{I}\left(h_{\sigma_{\epsilon_{2}}^{y}}\right)\right]
\]
and 
\[
\mathbb{E}^{\mathcal{W}}\left[\mathcal{I}\left(h_{d\sigma_{\epsilon_{1}}^{x}}\right)\mathcal{I}\left(h_{d\sigma_{\epsilon_{2}}^{y}}\right)\right]=\frac{d^{2}}{d\epsilon_{1}d\epsilon_{2}}\mathbb{E}\left[\mathcal{I}\left(h_{\sigma_{\epsilon_{1}}^{x}}\right)\mathcal{I}\left(h_{\sigma_{\epsilon_{2}}^{y}}\right)\right].
\]
These simply follow from the dominated convergence theorem and the
fact that both $J_{k}\left(r\right)$ and $J_{k}\left(r\right)/r$
are bounded on $r\in\left(0,\infty\right)$ for every $k\geq1$ . 

$\quad$\\
\emph{Proof of Lemma \ref{lem:on V_t(x)}:} The proof of all the formulas
(\ref{eq:vector covariance concentric}) to (\ref{eq:vector covariance nonoverlap})
is based on the following integral formulas of Bessel functions which
can be found in \cite{BesselFunctions}, §13.53, pp 429-430: if $a\geq b>0$
and $p>0$, then 
\begin{equation}
\int_{0}^{\infty}\frac{\tau}{\tau^{2}+p^{2}}J_{1}\left(a\tau\right)J_{1}\left(b\tau\right)d\tau=K_{1}\left(ap\right)I_{1}\left(bp\right);\label{eq:bessel fcn integral-1}
\end{equation}
if $a>b+c$ and $p>0$, then
\begin{equation}
\int_{0}^{\infty}\frac{1}{\tau^{2}+p^{2}}J_{1}\left(a\tau\right)J_{1}\left(b\tau\right)J_{1}\left(c\tau\right)d\tau=p^{-1}K_{1}\left(ap\right)I_{1}\left(bp\right)I_{1}\left(cp\right).\label{eq:bessel fcn integral-2}
\end{equation}
We hereby provide an alternative proof of these two formulas, and
complete the computations in Lemma \ref{lem:on V_t(x)}. For a fixed
$p>0$, we define the function 
\[
B\left(a,b\right)\equiv\frac{1}{2\pi^{2}ab}\int_{0}^{\infty}\frac{\tau}{\tau^{2}+p^{2}}J_{1}\left(a\tau\right)J_{1}\left(b\tau\right)d\tau\mbox{ for }a,b>0.
\]
Given $a>0$, we observe that if $\delta_{x}$ is the point mass at
$x\in\mathbb{R}^{4}$, then the following integral is finite:
\begin{align*}
 & \left(\frac{1}{2\pi}\right)^{4}\int_{\mathbb{R}^{4}}\frac{1}{p^{2}+\left|\xi\right|^{2}}\cdot e^{i\left(x,\xi\right)}\cdot\frac{2J_{1}\left(a\left|\xi\right|\right)}{a\left|\xi\right|}d\xi\\
= & \frac{1}{2\pi^{2}a\left|x\right|}\int_{0}^{\infty}\frac{\tau J_{1}\left(a\tau\right)J_{1}\left(\left|x\right|\tau\right)}{p^{2}+\tau^{2}}d\tau=B\left(a,\left|x\right|\right).
\end{align*}
But this integral is also ``formally'' equal to $\left(\left(p^{2}-\Delta\right)^{-1}\sigma_{a}^{0},\delta_{x}\right)_{L^{2}}$.
In other words, 
\[
\left(\left(p^{2}-\Delta\right)^{-1}\sigma_{a}^{0}\right)\left(x\right)=B\left(a,\left|x\right|\right)
\]
is a point-wise defined, radially symmetric function in $x\in\mathbb{R}^{4}$.
Therefore, in the sense of tempered distribution,
\[
\left(p^{2}-\Delta\right)B\left(a,\left|x\right|\right)=\sigma_{a}^{0},
\]
which, when written in spherical coordinates, implies
\[
\left(p^{2}-\partial_{b}^{2}-\frac{3}{b}\partial_{b}\right)B\left(a,b\right)=0\mbox{ for all }0<b\neq a.
\]
\foreignlanguage{english}{The above is a Bessel-type ordinary differential
equation, all the solutions of which are in the form of
\[
C_{1}\left(a\right)\frac{K_{1}\left(bp\right)}{b}+C_{2}\left(a\right)\frac{I_{1}\left(bp\right)}{b},
\]
where $C_{1}$ and $C_{2}$ are two functions only depending on $a$.
Without loss of generality, we can assume $b<a$. If one examines
the behavior of $B\left(a,b\right)$ when $b$ is close to zero, then
one finds that $C_{1}\left(a\right)\equiv0$ because $bB\left(a,b\right)$
converges to zero while $K_{1}\left(bp\right)$ blows up as $b\downarrow0$.
On the other hand, with $b>0$ fixed, one can apply exactly the same
arguments to see that $B\left(a,b\right)$ also satisfies 
\[
\left(p^{2}-\partial_{a}^{2}-\frac{3}{a}\partial_{a}\right)B\left(a,b\right)=0\mbox{ for all }a>b.
\]
Hence, $C_{2}\left(a\right)$ must be in the form of }

\selectlanguage{english}%
\[
C_{2}\left(a\right)=C\frac{K_{1}\left(ap\right)}{a}+C^{\prime}\frac{I_{1}\left(ap\right)}{a}
\]
for some constant $C$ and $C^{\prime}$. This time, the boundedness
of $aB\left(a,b\right)$ as $a\uparrow\infty$ implies $C^{\prime}=0$. 

Thus, the only thing left is to determine the constant $C$. To this
end, we observe
\[
\begin{split}2\pi^{2}a^{2}B\left(a,a\right) & =\int_{0}^{\infty}\frac{u}{u^{2}+a^{2}p^{2}}J_{1}^{2}\left(u\right)du\\
 & \longrightarrow\int_{0}^{\infty}\frac{J_{1}^{2}\left(u\right)}{u}du\mbox{ as }a\downarrow0.
\end{split}
\]
However, one can easily verify that 
\[
\frac{d}{du}\left(-\frac{J_{0}^{2}\left(u\right)+J_{1}^{2}\left(u\right)}{2}\right)=\frac{J_{1}^{2}\left(u\right)}{u}.
\]
So $\lim_{a\downarrow0}2\pi^{2}a^{2}B\left(a,a\right)=\frac{1}{2}$.
Meanwhile, $\lim_{a\downarrow0}I_{1}\left(ap\right)K_{1}\left(ap\right)=\frac{1}{2}$,
which implies $C=\frac{1}{2\pi^{2}}$. Therefore, 
\[
B\left(a,b\right)=\frac{1}{2\pi^{2}ab}I_{1}\left(bp\right)K_{1}\left(ap\right)\mbox{ for }a>b>0.
\]

For the formula (\ref{eq:bessel fcn integral-2}), we define 
\[
C\left(a,b,c\right)=\frac{1}{\pi^{2}abc}\int_{0}^{\infty}\frac{1}{\tau^{2}+p^{2}}J_{1}\left(a\tau\right)J_{1}\left(b\tau\right)J_{1}\left(c\tau\right)d\tau\mbox{ for }a,b,c>0.
\]
Assume $a>b+c$. One can verify, by direct computations inside the
integral signs and the dominated convergence theorem, that
\[
p^{2}C\left(a,b,c\right)-\frac{\partial^{2}}{\partial c^{2}}C\left(a,b,c\right)-\frac{3}{c}\frac{\partial}{\partial c}C\left(a,b,c\right)=0\mbox{ for }0<c<a-b,
\]
\[
\mbox{ and }\lim_{c\downarrow0}C\left(a,b,c\right)=B\left(a,b\right).
\]
Similarly as above, one has 
\[
C\left(a,b,c\right)=\frac{2I_{1}\left(cp\right)}{cp}B\left(a,b\right)=\frac{1}{\pi^{2}abcp}K_{1}\left(ap\right)I_{1}\left(bp\right)I_{1}\left(cp\right).
\]
Thus, (\ref{eq:bessel fcn integral-1}) and (\ref{eq:bessel fcn integral-2})
are proved. 

\selectlanguage{american}%
Given (\ref{eq:bessel fcn integral-1}), notice that $\mathbb{E}^{\mathcal{W}}\left[\mathcal{I}\left(h_{\sigma_{\epsilon_{1}}^{x}}\right)\mathcal{I}\left(h_{\sigma_{\epsilon_{2}}^{x}}\right)\right]$
can be computed by applying the operator $\frac{-1}{2p}\frac{d}{dp}|_{p=1}$
to both sides of (\ref{eq:bessel fcn integral-1}) with $a=\epsilon_{1}\vee\epsilon_{2}$
and $b=\epsilon_{1}\wedge\epsilon_{2}$. Then from there, based on
the earlier observations, the complete expression for the covariance
matrix in the concentric case can be obtained by taking derivatives
in $\epsilon_{1}$ and $\epsilon_{2}$ accordingly. The detailed computations
are as follows.\foreignlanguage{english}{ When $\epsilon_{1}\geq\epsilon_{2}>0$,
\[
\begin{split}\mathbb{E}^{\mathcal{W}}\left[\mathcal{I}\left(h_{\sigma_{\epsilon_{1}}^{x}}\right)\mathcal{I}\left(h_{\sigma_{\epsilon_{2}}^{x}}\right)\right] & =\left(-\frac{1}{2p}\frac{d}{dp}|_{p=1}\right)B\left(\epsilon_{1},\epsilon_{2}\right)\\
 & =\frac{-1}{4\pi^{2}}\left(K_{1}^{\prime}\left(\epsilon_{1}\right)\frac{I_{1}\left(\epsilon_{2}\right)}{\epsilon_{2}}+\frac{K_{1}\left(\epsilon_{1}\right)}{\epsilon_{1}}I_{1}^{\prime}\left(\epsilon_{2}\right)\right)\\
 & =\frac{-1}{4\pi^{2}}\left(\begin{array}{cc}
K_{1}^{\prime}\left(\epsilon_{1}\right) & K_{1}\left(\epsilon_{1}\right)/\epsilon_{1}\end{array}\right)\left(\begin{array}{c}
I_{1}\left(\epsilon_{2}\right)/\epsilon_{2}\\
I_{1}^{\prime}\left(\epsilon_{2}\right)
\end{array}\right).
\end{split}
\]
Thus,
\[
\begin{split} & \mathbb{E}^{\mathcal{W}}\left[V_{\epsilon_{1}}^{x}\left(V_{\epsilon_{2}}^{x}\right)^{\top}\right]\\
= & \left(\begin{array}{cc}
1 & \frac{\partial}{\partial\epsilon_{2}}\\
\frac{\partial}{\partial\epsilon_{1}} & \frac{\partial^{2}}{\partial\epsilon_{1}\partial\epsilon_{2}}
\end{array}\right)\mathbb{E}^{\mathcal{W}}\left[\mathcal{I}\left(h_{\sigma_{\epsilon_{1}}^{x}}\right)\mathcal{I}\left(h_{\sigma_{\epsilon_{2}}^{x}}\right)\right]\\
= & \frac{-1}{4\pi^{2}}\left(\begin{array}{cc}
K_{1}^{\prime}\left(\epsilon_{1}\right) & K_{1}\left(\epsilon_{1}\right)/\epsilon_{1}\\
K^{\prime\prime}\left(\epsilon_{1}\right) & \left(K_{1}\left(\epsilon_{1}\right)/\epsilon_{1}\right)^{\prime}
\end{array}\right)\left(\begin{array}{cc}
I_{1}\left(\epsilon_{2}\right)/\epsilon_{2} & \left(I_{1}\left(\epsilon_{2}\right)/\epsilon_{2}\right)^{\prime}\\
I_{1}^{\prime}\left(\epsilon_{2}\right) & I_{1}^{\prime\prime}\left(\epsilon_{2}\right)
\end{array}\right).
\end{split}
\]
Besides, one realizes that 
\[
\left(\frac{K_{1}\left(\epsilon_{1}\right)}{\epsilon_{1}}\right)^{\prime}=-\frac{K_{2}\left(\epsilon_{1}\right)}{\epsilon_{1}}\mbox{ and }\left(\frac{I_{1}\left(\epsilon_{2}\right)}{\epsilon_{2}}\right)^{\prime}=\frac{I_{2}\left(\epsilon_{2}\right)}{\epsilon_{2}},
\]
and the formula (\ref{eq:vector covariance concentric}) follows.}

The non-concentric case is very similar. Given (\ref{eq:bessel fcn integral-2}),
we assign $a=\epsilon_{1}$ in case (2) and $a=\left|x-y\right|$
in case (3). By a similar procedure, i.e., applying $\frac{-1}{2p}\frac{d}{dp}|_{p=1}$
to (\ref{eq:bessel fcn integral-2}) and taking derivatives in $\epsilon_{1}$
and $\epsilon_{2}$, we will be able to compute the non-concentric
covariance matrix in either (2) or (3). To be specific, when $\epsilon_{1}>\left|x-y\right|+\epsilon_{2}$,
$\mathbb{E}^{\mathcal{W}}\left[\mathcal{I}\left(h_{\sigma_{\epsilon_{1}}^{x}}\right)\mathcal{I}\left(h_{\sigma_{\epsilon_{2}}^{y}}\right)\right]$
equals 
\[
\begin{split} & \left(-\frac{1}{2p}\frac{d}{dp}|_{p=1}\right)C\left(\epsilon_{1},\epsilon_{2},\left|x-y\right|\right)\\
= & \frac{-1}{2\pi^{2}}\left(K_{1}^{\prime}\left(\epsilon_{1}\right)\frac{I_{1}\left(\epsilon_{2}\right)}{\epsilon_{2}}\frac{I_{1}\left(\left|x-y\right|\right)}{\left|x-y\right|}+\frac{K_{1}\left(\epsilon_{1}\right)}{\epsilon_{1}}I_{1}^{\prime}\left(\epsilon_{2}\right)\frac{I_{1}\left(\left|x-y\right|\right)}{\left|x-y\right|}\right)\\
 & \qquad\qquad\qquad\qquad-\frac{1}{2\pi^{2}}\left(\frac{K_{1}\left(\epsilon_{1}\right)}{\epsilon_{1}}\frac{I_{1}\left(\epsilon_{2}\right)}{\epsilon_{2}}I_{1}^{\prime}\left(\left|x-y\right|\right)-\frac{K_{1}\left(\epsilon_{1}\right)}{\epsilon_{1}}\frac{I_{1}\left(\epsilon_{2}\right)}{\epsilon_{2}}\frac{I_{1}\left(\left|x-y\right|\right)}{\left|x-y\right|}\right)\\
= & \frac{-1}{2\pi^{2}}\left(K_{1}^{\prime}\left(\epsilon_{1}\right)\frac{I_{1}\left(\epsilon_{2}\right)}{\epsilon_{2}}\frac{I_{1}\left(\left|x-y\right|\right)}{\left|x-y\right|}+\frac{K_{1}\left(\epsilon_{1}\right)}{\epsilon_{1}}I_{1}^{\prime}\left(\epsilon_{2}\right)\frac{I_{1}\left(\left|x-y\right|\right)}{\left|x-y\right|}+\frac{K_{1}\left(\epsilon_{1}\right)}{\epsilon_{1}}\frac{I_{1}\left(\epsilon_{2}\right)}{\epsilon_{2}}I_{2}\left(\left|x-y\right|\right)\right)\\
= & \frac{-1}{2\pi^{2}}\left(\begin{array}{cc}
K_{1}^{\prime}\left(\epsilon_{1}\right) & \frac{K_{1}\left(\epsilon_{1}\right)}{\epsilon_{1}}\end{array}\right)\left(\begin{array}{cc}
\frac{I_{1}\left(\left|x-y\right|\right)}{\left|x-y\right|} & 0\\
I_{2}\left(\left|x-y\right|\right) & \frac{I_{1}\left(\left|x-y\right|\right)}{\left|x-y\right|}
\end{array}\right)\left(\begin{array}{c}
I_{1}\left(\epsilon_{2}\right)/\epsilon_{2}\\
I_{1}^{\prime}\left(\epsilon_{2}\right)
\end{array}\right);
\end{split}
\]
when $\left|x-y\right|>\epsilon_{1}+\epsilon_{2}$, $\mathbb{E}^{\mathcal{W}}\left[\mathcal{I}\left(h_{\sigma_{\epsilon_{1}}^{x}}\right)\mathcal{I}\left(h_{\sigma_{\epsilon_{2}}^{y}}\right)\right]$
equals 
\[
\begin{split} & \left(-\frac{1}{2p}\frac{d}{dp}|_{p=1}\right)C\left(\left|x-y\right|,\epsilon_{1},\epsilon_{2}\right)\\
= & \frac{-1}{2\pi^{2}}\left(K_{1}^{\prime}\left(\left|x-y\right|\right)\frac{I_{1}\left(\epsilon_{1}\right)}{\epsilon_{1}}\frac{I_{1}\left(\epsilon_{2}\right)}{\epsilon_{2}}-\frac{K_{1}\left(\left|x-y\right|\right)}{\left|x-y\right|}\frac{I_{1}\left(\epsilon_{1}\right)}{\epsilon_{1}}\frac{I_{1}\left(\epsilon_{2}\right)}{\epsilon_{2}}\right)\\
 & \qquad\qquad\qquad\qquad-\frac{1}{2\pi^{2}}\left(\frac{K_{1}\left(\left|x-y\right|\right)}{\left|x-y\right|}\frac{I_{1}\left(\epsilon_{1}\right)}{\epsilon_{1}}I_{1}^{\prime}\left(\epsilon_{2}\right)+\frac{K_{1}\left(\left|x-y\right|\right)}{\left|x-y\right|}I_{1}^{\prime}\left(\epsilon_{1}\right)\frac{I_{1}\left(\epsilon_{2}\right)}{\epsilon_{2}}\right)\\
= & \frac{-1}{2\pi^{2}}\left(-K_{2}\left(\left|x-y\right|\right)\frac{I_{1}\left(\epsilon_{1}\right)}{\epsilon_{1}}\frac{I_{1}\left(\epsilon_{2}\right)}{\epsilon_{2}}+\frac{K_{1}\left(\left|x-y\right|\right)}{\left|x-y\right|}\frac{I_{1}\left(\epsilon_{1}\right)}{\epsilon_{1}}I_{1}^{\prime}\left(\epsilon_{2}\right)+\frac{K_{1}\left(\left|x-y\right|\right)}{\left|x-y\right|}I_{1}^{\prime}\left(\epsilon_{1}\right)\frac{I_{1}\left(\epsilon_{2}\right)}{\epsilon_{2}}\right)\\
= & \frac{-1}{2\pi^{2}}\left(\begin{array}{cc}
\frac{I_{1}\left(\epsilon_{1}\right)}{\epsilon_{1}} & I_{1}^{\prime}\left(\epsilon_{1}\right)\end{array}\right)\left(\begin{array}{cc}
-K_{2}\left(\left|x-y\right|\right) & \frac{K_{1}\left(\left|x-y\right|\right)}{\left|x-y\right|}\\
\frac{K_{1}\left(\left|x-y\right|\right)}{\left|x-y\right|} & 0
\end{array}\right)\left(\begin{array}{c}
I_{1}\left(\epsilon_{2}\right)/\epsilon_{2}\\
I_{1}^{\prime}\left(\epsilon_{2}\right)
\end{array}\right).
\end{split}
\]
The rest is straightforward. Thus we have finished the proof of Lemma
\ref{lem:on V_t(x)}. $\square$

Before continuing, we point out that similar computations can be carried
out in higher even dimensions $\mathbb{R}^{2n}$ with $n\geq2$. In
fact, the formulas (\ref{eq:bessel fcn integral-1}) and (\ref{eq:bessel fcn integral-2})
remain true if one replaces $J_{1}$ by $J_{n-1}$ in the integrands,
times a factor of $\tau^{2-n}$ to the integrand of (\ref{eq:bessel fcn integral-2}),
and replaces $K_{1}$ by $K_{n-1}$, $I_{1}$ by $I_{n-1}$ respectively
in the results. Therefore, one can use these modified results to compute
the covariance function of the Gaussian family $\left\{ \mathcal{I}\left(h_{\sigma_{\epsilon}^{x}}\right):x\in\mathbb{R}^{2n},\epsilon>0\right\} $
(as defined in Section 6.1) by applying the operator $\left(-\frac{1}{2p}\frac{d}{dp}\right)^{n-1}|_{p=1}$
to the modified version of (\ref{eq:bessel fcn integral-1}) and (\ref{eq:bessel fcn integral-2}).
The rest follows similarly as above. 

Next, we want to provide details in the deriving the formulas for
$\mu_{x}^{\epsilon}$ (\ref{eq:mu_epsilon def}) and $G\left(\epsilon\right)$
(\ref{eq: variance G}) as well as the results (\ref{eq:cov concentric})-(\ref{eq:cov nonoverlap})
in Theorem \ref{eq:mu_epsilon def}. It's an easy matter to check
that for every $\epsilon>0$, 
\[
\det\mathbf{B}\left(\epsilon\right)=\epsilon^{-1}\left(I_{1}^{2}\left(\epsilon\right)-I_{0}\left(\epsilon\right)I_{2}\left(\epsilon\right)\right)>0,
\]
where we applied the Bessel function identities (\cite{BesselFunctions},
§3.71) $I_{1}^{\prime}\left(\epsilon\right)=\frac{-I_{1}\left(\epsilon\right)}{\epsilon}+I_{0}\left(\epsilon\right)$
and $I_{1}^{\prime\prime}\left(\epsilon\right)=\frac{-I_{2}\left(\epsilon\right)}{\epsilon}+I_{1}\left(\epsilon\right).$
Therefore, 
\[
\mathbf{B}^{-1}\left(\epsilon\right)=\frac{1}{I_{1}^{2}\left(\epsilon\right)-I_{0}\left(\epsilon\right)I_{2}\left(\epsilon\right)}\left(\begin{array}{cc}
\epsilon I_{1}\left(\epsilon\right)-I_{2}\left(\epsilon\right) & I_{1}\left(\epsilon\right)-\epsilon I_{0}\left(\epsilon\right)\\
-I_{2}\left(\epsilon\right) & I_{1}\left(\epsilon\right)
\end{array}\right).
\]
Recall that $U_{\epsilon}^{x}=\mathbf{B}^{-1}\left(\epsilon\right)V_{\epsilon}^{x}$,
when computed explicitly, 
\[
U_{\epsilon}^{x}=\frac{1}{I_{1}^{2}\left(\epsilon\right)-I_{0}\left(\epsilon\right)I_{2}\left(\epsilon\right)}\left(\begin{array}{c}
\left(\epsilon I_{1}\left(\epsilon\right)-I_{2}\left(\epsilon\right)\right)\mathcal{I}\left(h_{\sigma_{\epsilon}^{x}}\right)+\left(I_{1}\left(\epsilon\right)-\epsilon I_{0}\left(\epsilon\right)\right)\mathcal{I}\left(h_{d\sigma_{\epsilon}^{x}}\right)\\
-I_{2}\left(\epsilon\right)\mathcal{I}\left(h_{\sigma_{\epsilon}^{x}}\right)+I_{1}\left(\epsilon\right)\mathcal{I}\left(h_{d\sigma_{\epsilon}^{x}}\right)
\end{array}\right),
\]
and if $\zeta=\left(1,1\right)^{\top}$, then $\mu_{\epsilon}^{x}=f_{1}\left(\epsilon\right)\sigma_{\epsilon}^{x}+f_{2}\left(\epsilon\right)d\sigma_{\epsilon}^{x}$
where
\begin{equation}
f_{1}\left(\epsilon\right)\equiv\frac{\epsilon I_{1}\left(\epsilon\right)-2I_{2}\left(\epsilon\right)}{I_{1}^{2}\left(\epsilon\right)-I_{0}\left(\epsilon\right)I_{2}\left(\epsilon\right)}\mbox{ and }f_{2}\left(\epsilon\right)\equiv\frac{-\epsilon I_{2}\left(\epsilon\right)}{I_{1}^{2}\left(\epsilon\right)-I_{0}\left(\epsilon\right)I_{2}\left(\epsilon\right)},\label{eq:f1 and f2}
\end{equation}
from which one sees that $\mu_{\epsilon}^{x}$ has the ``right''
limit as $\epsilon\downarrow0$. In addition, we can apply more Bessel
function identities:
\[
I_{1}^{\prime}\left(\epsilon\right)=\frac{1}{\epsilon}I_{1}\left(\epsilon\right)+I_{2}\left(\epsilon\right),\; I_{0}\left(\epsilon\right)-I_{2}\left(\epsilon\right)=\frac{2}{\epsilon}I_{1}\left(\epsilon\right),
\]
\[
K_{1}^{\prime}\left(\epsilon\right)=\frac{1}{\epsilon}K_{1}\left(\epsilon\right)-K_{2}\left(\epsilon\right)\mbox{ and }I_{1}\left(\epsilon\right)K_{2}\left(\epsilon\right)+I_{2}\left(\epsilon\right)K_{1}\left(\epsilon\right)=\frac{1}{\epsilon},
\]
to write down $\mathbf{B}^{-1}\left(\epsilon\right)\mathbf{A}\left(\epsilon\right)$
explicitly as $\left(I_{1}^{2}\left(\epsilon\right)-I_{0}\left(\epsilon\right)I_{2}\left(\epsilon\right)\right)^{-1}$
times 
\begin{multline*}
\left(\begin{array}{cc}
-\left(1+\epsilon^{-2}\right) & I_{1}\left(\epsilon\right)K_{1}\left(\epsilon\right)+I_{2}\left(\epsilon\right)K_{0}\left(\epsilon\right)+\epsilon^{-2}\\
I_{1}\left(\epsilon\right)K_{1}\left(\epsilon\right)+I_{2}\left(\epsilon\right)K_{0}\left(\epsilon\right)+\epsilon^{-2} & -\epsilon^{-2}
\end{array}\right).
\end{multline*}
Therefore we have: \\
(1), given $x\in\mathbb{R}^{4}$ and $\epsilon_{1}\geq\epsilon_{2}>0$,
\[
\begin{split}\mathbb{E}^{\mathcal{W}}\left[\mathcal{I}\left(h_{\mu_{\epsilon_{1}}^{x}}\right)\mathcal{I}\left(h_{\mu_{\epsilon_{2}}^{x}}\right)\right]=G\left(\epsilon\right) & \equiv\left(-\frac{1}{4\pi^{2}}\right)\zeta^{\top}\mathbf{B}^{-1}\left(\epsilon\right)\mathbf{A}\left(\epsilon\right)\zeta\;(\mbox{with }\zeta^{\top}=\left(1,1\right))\\
 & =\left(-\frac{1}{4\pi^{2}}\right)\frac{2I_{1}\left(\epsilon\right)K_{1}\left(\epsilon\right)+2I_{2}\left(\epsilon\right)K_{0}\left(\epsilon\right)-1}{I_{1}^{2}\left(\epsilon\right)-I_{0}\left(\epsilon\right)I_{2}\left(\epsilon\right)};
\end{split}
\]
(2), given $x,y\in\mathbb{R}^{4}$, $x\neq y$, and $\epsilon_{1},\epsilon_{2}>0$
with $\epsilon_{1}>\left|x-y\right|+\epsilon_{2}$, $\mathbb{E}^{\mathcal{W}}\left[\mathcal{I}\left(h_{\mu_{\epsilon_{1}}^{x}}\right)\mathcal{I}\left(h_{\mu_{\epsilon_{2}}^{x}}\right)\right]$
equals $\frac{-1}{2\pi^{2}}\left(I_{1}^{2}-I_{0}I_{2}\right)^{-1}\left(\epsilon_{1}\right)$
times
\[
\begin{split} & \zeta^{\top}\mathbf{B}^{-1}\left(\epsilon_{1}\right)\mathbf{A}\left(\epsilon_{1}\right)\mathbf{C}\left(\left|x-y\right|\right)\zeta.\\
= & \left(\begin{array}{cc}
\left(I_{1}K_{1}+I_{2}K_{0}\right)\left(\epsilon_{1}\right)-1 & \left(I_{1}K_{1}+I_{2}K_{0}\right)\left(\epsilon_{1}\right)\end{array}\right)\left(\begin{array}{c}
\frac{I_{1}\left(\left|x-y\right|\right)}{\left|x-y\right|}\\
I_{2}\left(\left|x-y\right|\right)+\frac{I_{1}\left(\left|x-y\right|\right)}{\left|x-y\right|}
\end{array}\right)\\
= & \left(\frac{2I_{1}\left(\left|x-y\right|\right)}{\left|x-y\right|}+I_{2}\left(\left|x-y\right|\right)\right)\left(\left(I_{1}K_{1}+I_{2}K_{0}\right)\left(\epsilon_{1}\right)-\frac{1}{2}\right)+\frac{1}{2}I_{2}\left(\left|x-y\right|\right)\\
= & I_{0}\left(\left|x-y\right|\right)\left(\left(I_{1}K_{1}+I_{2}K_{0}\right)\left(\epsilon_{1}\right)-\frac{1}{2}\right)+\frac{1}{2}I_{2}\left(\left|x-y\right|\right);
\end{split}
\]
(3), given $x,y\in\mathbb{R}^{4}$, $x\neq y$, and $\epsilon_{1},\epsilon_{2}>0$
with $\left|x-y\right|>\epsilon_{1}+\epsilon_{2}$,
\[
\begin{split}\mathbb{E}^{\mathcal{W}}\left[\mathcal{I}\left(h_{\mu_{\epsilon_{1}}^{x}}\right)\mathcal{I}\left(h_{\mu_{\epsilon_{2}}^{x}}\right)\right] & =\left(-\frac{1}{2\pi^{2}}\right)\zeta^{\top}\mathbf{D}\left(\left|x-y\right|\right)\zeta.\\
 & =\left(-\frac{1}{2\pi^{2}}\right)\left(\frac{2K_{1}\left(\left|x-y\right|\right)}{\left|x-y\right|}-K_{2}\left(\left|x-y\right|\right)\right)\\
 & =\frac{1}{2\pi^{2}}K_{0}\left(\left|x-y\right|\right).
\end{split}
\]

\end{document}